\def\phi{\varphi}
\def\rho{\varrho}
\def\epsilon{\varepsilon}
\numberwithin{equation}{section}
\theoremstyle{plain}
\newtheorem{theorem}[equation]{Theorem}
\newtheorem{lemma}[equation]{Lemma}
\newtheorem{corollary}[equation]{Corollary}
\theoremstyle{definition}
\newtheorem{definition}[equation]{Definition}
\theoremstyle{remark}
\newtheorem{remark}[equation]{Remark}
\renewcommand{\le}{\leqslant}
\renewcommand{\ge}{\geqslant}
\renewcommand{\leq}{\leqslant}
\renewcommand{\geq}{\geqslant}
\newcommand{\vertiii}[1]{{\left\vert\kern-0.25ex\left\vert\kern-0.25ex\left\vert \right\vert\kern-0.25ex\right\vert\kern-0.25ex\right\vert}}
\begin{document}
\title[Mixed-norm Herz-type Besov-Triebel-Lizorkin spaces]{Mixed-norm
Herz-type Besov-Triebel-Lizorkin spaces}
\author[D. Drihem]{Douadi Drihem}
\address{Douadi Drihem\\
Laboratory of Functional Analysis and Geometry of Spaces, Faculty of
Mathematics and Informatics, Department of Mathematics,\\
M'sila University, PO Box 166 Ichebelia, M'sila 28000, Algeria}
\email{douadidr@yahoo.fr, douadi.drihem@univ-msila.dz}
\thanks{ }
\date{\today }
\subjclass[2010]{ 46E35.}

\begin{abstract}
Based on mixed-norm Herz spaces, we introduce the classes of mixed-norm
Herz-type Besov and Triebel-Lizorkin spaces. We establish their $\varphi$%
-transform characterization in the sense of Frazier and Jawerth and prove
Sobolev, Franke and Jawerth embedding theorems for these spaces. The
obtained Sobolev embeddings extend and improve several known embedding
results for mixed-norm Besov and Triebel-Lizorkin spaces. As a consequence
of our results, the Franke and Jawerth embeddings for mixed-norm Besov and
Triebel-Lizorkin spaces are established.
\end{abstract}

\keywords{Mixed-norm Herz, Besov space, Triebel-Lizorkin space, Embedding.}
\maketitle

\section{Introduction}

Many families of function spaces play a central role in analysis,
particularly in the study of partial differential equations. Classical
examples include H\"{o}lder spaces, Bessel potential spaces, Besov spaces,
and Triebel-Lizorkin spaces.

In recent years, considerable attention has been devoted to the development
of modified versions of these classical spaces and to the extension of
fundamental results to these more general settings. One notable example is
provided by Besov-Morrey spaces, which were introduced and studied in \cite%
{KoYa94}; see also \cite{Ma194,Ma203,Sata07,HWang09} for further
developments. Another important example is given by Herz-type Besov and
Triebel-Lizorkin spaces. These spaces are modeled on the classical Besov and
Triebel-Lizorkin scales, but their underlying norm is of Herz type, $\dot{K}%
_{p}^{\alpha,q}$, or $K_{p}^{\alpha,q}$, rather than the usual Lebesgue norm 
$L^{p}$; see \cite{XuYang05}.

The interest in Herz-type Besov-Triebel-Lizorkin spaces stems not only from
their rich theoretical structure but also from their applications to several
classical problems in analysis. In \cite{LuYang97}, Lu and Yang introduced
Herz-type Sobolev and Bessel potential spaces and applied them to partial
differential equations. Later, Tsutsui \cite{T11} investigated the Cauchy
problem for the Navier-Stokes equations in Herz and weak Herz spaces. More
recently, in \cite{Dr22.Banach}, the author studied the Cauchy problem for
the semilinear parabolic equation 
\begin{equation*}
\partial _{t}u-\Delta u=G(u)
\end{equation*}
with initial data belonging to Herz-type Triebel-Lizorkin spaces under
suitable assumptions on the nonlinear term $G$.

Recently, Zhang, Yang, and Zhang \cite{ZYZ22} introduced a class of
mixed-norm Herz spaces, providing a natural extension of mixed-norm Lebesgue
spaces. They characterized the corresponding dual spaces, established a
Riesz-Thorin interpolation theorem, and proved both the boundedness of the
Hardy-Littlewood maximal operator and the Fefferman-Stein vector-valued
maximal inequality in this setting.

Furthermore, the authors of \cite{ZYZ22} proposed the study of Besov and
Triebel-Lizorkin spaces, as well as other smooth function spaces, built upon
mixed-norm Herz spaces.

The main objective of the present paper is to develop the theory of
Besov-Triebel-Lizorkin-type spaces associated with mixed-norm Herz spaces,
as suggested in \cite{ZYZ22}. More precisely, we introduce and investigate
the scales

\begin{equation*}
\dot{E}_{\vec{p}}^{\vec{\alpha},\vec{q}}B_{\beta }^{s}\quad \text{and}\quad 
\dot{E}_{\vec{p}}^{\vec{\alpha},\vec{q}}F_{\beta }^{s}.
\end{equation*}%
These spaces simultaneously generalize the classical mixed-norm Besov and
Triebel-Lizorkin spaces and the Herz-type Besov and Triebel-Lizorkin spaces.

The paper is organized as follows. In Section 2, we review several known
results concerning mixed-norm Herz spaces and introduce the notation and
preliminary material needed throughout the paper. We also establish a number
of technical tools, including an extension of the classical Plancherel-P\'{o}%
lya-Nikol'skij inequality to the mixed-norm Herz setting. In Section 3,
using the Calder\'{o}n reproducing formula, we obtain the $\varphi$%
-transform characterization of the spaces $\dot{E}_{\vec{p}}^{\vec{\alpha},%
\vec{q}}B_{\beta }^{s}\ $and$\ \dot{E}_{\vec{p}}^{\vec{\alpha},\vec{q}%
}F_{\beta }^{s}$ in the sense of Frazier and Jawerth. Section 4 is devoted
to elementary embedding results for these spaces. Sobolev-type embeddings
are established in Section 5. Finally, in Section 6, we prove Jawerth and
Franke embeddings for the newly introduced scales of spaces.

\section{Mixed-norm\ Herz spaces}

The aim of this section is twofold. First, we provide the necessary
background on mixed-norm Herz spaces. Second, we present several technical
results that will be needed in the subsequent sections of the paper,
including the boundedness of the maximal operator and the Plancherel-P\'{o}%
lya-Nikol'skij inequality in the setting of mixed-norm Herz spaces. These
results will play a crucial role in the development of the main theory.
Throughout this paper, we adopt the following notation and conventions.

\subsection{Notation and conventions}

Throughout the paper, $\mathbb{R}^{n}$ denotes the $n$-dimensional Euclidean
space, $\mathbb{N}$ the set of positive integers, and $\mathbb{N}_{0}=%
\mathbb{N}\cup\{0\}$. The symbol $\mathbb{Z}$ stands for the set of all
integers.

For two non-negative quantities $f$ and $g$, we write $f\lesssim g$ if there
exists a positive constant $c$, independent of the relevant parameters, such
that $f\leq cg$. We write $f\approx g$ if both $f\lesssim g$ and $g\lesssim f
$ hold.

For $u>0$, let 
\begin{equation*}
C(u)=\left\{x\in\mathbb{R}^{n}:\frac{u}{2}\leq |x|<u\right\}. 
\end{equation*}
In particular, if $u=2^{k}$ with $k\in\mathbb{Z}$, we write $C_{k}=C(2^{k}).$

For a function $f$, $\operatorname{supp} f$ denotes its support. If
$E\subset\mathbb{R}^{n}$ is a measurable set, then $|E|$ denotes its
Lebesgue measure and $\chi_E$ its characteristic function. For $1\leq p\leq \infty$, the number $p^{\prime }$
is the conjugate exponent of $p$, defined by $\frac{1}{p}+\frac{1}{p^{\prime
}}=1.$

Let $E\subset\mathbb{R}^{n}$ be measurable and let $0<p\leq\infty$. We
denote by $L^{p}(E)$ the space of all measurable functions $f:E\to\mathbb{C}$
such that 
\begin{equation*}
\|f\|_{L^{p}(E)} = \left(\int_{E}|f(x)|^{p}\,dx\right)^{1/p} <\infty, \qquad
0<p<\infty, 
\end{equation*}
and 
\begin{equation*}
\big\|f\big\|_{L^{\infty }(E)}=\underset{x\in E}{\text{ess-sup}}%
|f(x)|<\infty .
\end{equation*}
When $E=\mathbb{R}^{n}$, we simply write $L^{p}$ instead of $L^{p}(\mathbb{R}%
^{n})$ and $\|f\|_{p}$ instead of $\|f\|_{L^{p}(\mathbb{R}^{n})}$.

For $\vec p=(p_{1},\ldots,p_{n})$ with $p_{i}\in(0,\infty]$, $i\in\{1,\ldots,n\}$, we write $%
0<\vec p\leq\infty$. The mixed-norm Lebesgue space $L^{\vec p}(\mathbb{R}%
^{n})$ consists of all measurable functions $f:\mathbb{R}^{n}\to\mathbb{C}$
satisfying 
\begin{equation*}
\|f\|_{L^{\vec p}(\mathbb{R}^{n})} = \Bigg(
\int_{\mathbb{R}} \Bigg(
\cdots \Bigg(
\int_{\mathbb{R}} |f(x_{1},\ldots,x_{n})|^{p_{1}} \,dx_{1} \Bigg)%
^{p_{2}/p_{1}} \cdots \Bigg)^{p_{n}/p_{n-1}} dx_{n} \Bigg)^{1/p_{n}}
<\infty, 
\end{equation*}
with the usual modification of replacing the corresponding integral by the
essential supremum whenever $p_{j}=\infty$. Equipped with this quasi-norm, $%
L^{\vec p}(\mathbb{R}^{n})$ is a quasi-Banach space and becomes a Banach
space whenever $\min\{p_{1},\ldots,p_{n}\}\geq 1$. If $p_{1}=\cdots=p_{n}=p$%
, then 
\begin{equation*}
L^{\vec p}(\mathbb{R}^{n})=L^{p}(\mathbb{R}^{n}). 
\end{equation*}
For further details on mixed-norm Lebesgue spaces, we refer to \cite{BP61}.

We denote by $\mathcal{S}(\mathbb{R}^{n})$ the Schwartz space of rapidly
decreasing smooth functions on $\mathbb{R}^{n}$. It is equipped with the
family of seminorms 
\begin{equation*}
\|\varphi\|_{\mathcal{S}_{M}} = \sup_{\substack{ \gamma\in\mathbb{N}_{0}^{n}
\\ |\gamma|\leq M}} \sup_{x\in\mathbb{R}^{n}} |\partial^{\gamma}\varphi(x)|
(1+|x|)^{n+M+|\gamma|}, \qquad M\in\mathbb{N}. 
\end{equation*}
Its dual space, denoted by $\mathcal{S}^{\prime }(\mathbb{R}^{n})$, is the
space of tempered distributions.

The Fourier transform of $f\in\mathcal{S}(\mathbb{R}^{n})$ is defined by 
\begin{equation*}
\mathcal{F}f(\xi) = (2\pi)^{-n/2} \int_{\mathbb{R}^{n}}
e^{-ix\cdot\xi}f(x)\,dx, \qquad \xi\in\mathbb{R}^{n}, 
\end{equation*}
and its inverse is denoted by $\mathcal{F}^{-1}f$. Both operators extend to $%
\mathcal{S}^{\prime }(\mathbb{R}^{n})$ in the usual way.

For $v\in\mathbb{N}_{0}$ and $m\in\mathbb{Z}^{n}$, let 
\begin{equation*}
Q_{v,m} = 2^{-v}\bigl(\lbrack 0,1)^{n}+m\bigr)
\end{equation*}
denote the dyadic cube of side length $2^{-v}$. We write $%
\chi_{v,m}=\chi_{Q_{v,m}}.$

Furthermore, for $i\in\{1,\ldots,n\}$, we define 
\begin{equation*}
Q_{v,m_i}^{\,i} = 2^{-v}\bigl(\lbrack 0,1)+m_i\bigr). 
\end{equation*}

Let $N,R>0$. We define 
\begin{equation*}
\eta_{R,N}(x) = R^{n}\prod_{i=1}^{n}(1+R|x_{i}|)^{-N}, \qquad
x=(x_{1},\ldots,x_{n})\in\mathbb{R}^{n}, 
\end{equation*}
and, in particular, 
\begin{equation*}
\eta_{i,R,N}(x_i) = R(1+R|x_i|)^{-N}, \qquad x_i\in\mathbb{R}, \quad
i\in \{1,\dots,n\}. 
\end{equation*}

For $0<\vec p\leq\infty$, $\vec\alpha=(\alpha_{1},\ldots,\alpha_{n})\in%
\mathbb{R}^{n}$, and $\vec\alpha_{j}=(\alpha_{1}^{j},\ldots,\alpha_{n}^{j})%
\in\mathbb{R}^{n}$, $j\in\{1,2\}$, we use the abbreviations 
\begin{equation*}
p_{-} = \min\{p_{1},\ldots,p_{n}\}, \qquad \frac{1}{\mathbf{p}} =
\sum_{i=1}^{n}\frac{1}{p_{i}}, 
\end{equation*}
and 
\begin{equation*}
\boldsymbol{\alpha}_{j}=\sum_{i=1}^{n}\alpha _{i}^{j}, \qquad \boldsymbol{\alpha}=\sum_{i=1}^{n}\alpha _{i},. 
\end{equation*}

Finally, $\mathfrak{M}(\mathbb{R}^{n})$ denotes the collection of all
measurable functions on $\mathbb{R}^{n}$. Any additional notation will be
introduced as needed.

\subsection{Definition and some basic properties}

The main purpose of this subsection is to present some fundamental
properties of mixed-norm Herz spaces. For $i\in \{1,\dots,n\}$ and $k_{i}\in 
\mathbb{Z}$, let 
\begin{equation*}
R_{k_{i}}=\{x_{i}\in \mathbb{R}:2^{k_{i}-1}\leq |x_{i}|<2^{k_{i}}\} \quad 
\text{and} \quad \chi_{k_{i}}=\chi_{R_{k_{i}}}.
\end{equation*}

\begin{definition}
Let $0<\vec{p},\vec{q}\leq \infty$ and $\vec{\alpha}=(\alpha_{1},\dots,%
\alpha_{n})\in \mathbb{R}^{n}$. The mixed-norm Herz space $\dot{E}_{\vec{p}%
}^{\vec{\alpha},\vec{q}}(\mathbb{R}^{n})$ is defined as the set of all
functions $f\in \mathfrak{M}(\mathbb{R}^{n})$ such that  
\begin{equation*}
\big\|f\big\|_{\dot{E}_{\vec{p}}^{\vec{\alpha},\vec{q}}(\mathbb{R}^{n})}= %
\big\|\cdot \cdot \cdot \big\|f\big\|_{\dot{K}_{p_{1}}^{\alpha
_{1},q_{1}}}\cdot \cdot \cdot \big\|_{\dot{K}_{p_{n}}^{\alpha
_{n},q_{n}}}<\infty ,
\end{equation*}
where  
\begin{equation*}
\|f\|_{\dot{K}_{p_{i}}^{\alpha_{i},q_{i}}} = \Big(\sum_{k_{i}\in \mathbb{Z}}
2^{k_{i}\alpha_{i}q_{i}} \|f\,\chi_{k_{i}}\|_{L^{{p_{i}}}(\mathbb R)}^{q_{i}}\Big)^{1/q_{i}},
\quad i\in \{1,\dots,n\}.
\end{equation*}
\end{definition}

\begin{remark}
When $p_{i}=q_{i}=p$ for all $i\in \{1,\dots,n\}$ and $\vec{\alpha}=\vec{0}=(0,\dots,%
0)$, the mixed-norm Herz spaces $\dot{E}_{\vec{p}}^{\vec{0},\vec{q}}(%
\mathbb{R}^{n})$ coincide with the Lebesgue space $L^{p}(\mathbb{R}^{n})$.

Moreover, when $p_{i}=q_{i}$ for all $i\in \{1,\dots,n\}$ and $\vec{\alpha}=%
\vec{0}$, they coincide with the mixed Lebesgue space $L^{\vec{p}}(%
\mathbb{R}^{n})$.

More details on mixed-norm Herz spaces can be found in \cite{ZYZ22}.
\end{remark}

The Hardy-Littlewood maximal operator $M$ is defined for locally integrable
functions by 
\begin{equation*}
Mf(x)=\sup_{r>0}\frac{1}{|B(x,r)|}\int_{B(x,r)}|f(y)|\,dy.
\end{equation*}

When the maximal operator is applied only in the variable $x_k$, using the
decomposition $x=(x^{\prime },x_k,x^{\prime \prime })$, we write 
\begin{equation*}
M_k f(x_1,\dots,x_n)=(Mf(x_1,\dots,x_n))(x_k).
\end{equation*}

Furthermore, for $t>0$, the iterated maximal operator $\mathcal{M}_t$ is
defined by 
\begin{equation*}
\mathcal{M}_{t}(f)(x)=\big(M_{n}\cdots M_{1}(|f|^{t})(x)\big)^{1/t}.
\end{equation*}

The proof of the main results of this paper is based on the following lemma;
see \cite{ZYZ22}.

\begin{lemma}
\label{Maximal-Inq}  Let $0<\beta \leq \infty$, $0<\vec{p},\vec{q}<\infty$,
and $\vec{\alpha}=(\alpha_{1},\dots,\alpha_{n})\in \mathbb{R}^{n}$ satisfy,
for each $i\in \{1,\dots,n\}$,  
\begin{equation*}
-\frac{1}{p_{i}}<\alpha_{i}<1-\frac{1}{p_{i}}, \quad \text{and} \quad
0<t<\min(\beta,p_{-},q_{-}).
\end{equation*}
Then there exists a constant $C>0$ such that, for any $\{f_{j}\}_{j\in 
\mathbb{Z}}\subset \dot{E}_{\vec{p}}^{\vec{\alpha},\vec{q}}(\mathbb{R}^{n})$%
,  
\begin{equation*}
\Big\|\Big(\sum_{j=-\infty}^{\infty}(\mathcal{M}_{t}(f_{j}))^{\beta}\Big)%
^{1/\beta}\Big\|_{\dot{E}_{\vec{p}}^{\vec{\alpha},\vec{q}}(\mathbb{R}^{n})}
\leq C \Big\|\Big(\sum_{j=-\infty}^{\infty}|f_{j}|^{\beta}\Big)^{1/\beta}%
\Big\|_{\dot{E}_{\vec{p}}^{\vec{\alpha},\vec{q}}(\mathbb{R}^{n})}.
\end{equation*}
\end{lemma}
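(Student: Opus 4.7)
My plan is to combine a scaling reduction (the $t$-trick) with an iteration of the known one-dimensional Fefferman--Stein vector-valued maximal inequality on Herz spaces. First, I would use the scaling identity $\big\||f|^{t}\big\|_{\dot{K}_{p}^{\alpha,q}} = \|f\|_{\dot{K}_{tp}^{\alpha/t,tq}}^{t}$, which iterates coordinate-by-coordinate to
\begin{equation*}
\|F\|_{\dot{E}_{\vec{p}}^{\vec{\alpha},\vec{q}}}^{t} \;=\; \big\||F|^{t}\big\|_{\dot{E}_{\vec{p}/t}^{t\vec{\alpha},\vec{q}/t}}.
\end{equation*}
Substituting $g_{j}=|f_{j}|^{t}$ and $\gamma=\beta/t$ turns $(\mathcal{M}_{t}f_{j})^{\beta}$ into $(M_{n}\cdots M_{1}g_{j})^{\gamma}$, so the claim becomes equivalent to
\begin{equation*}
\Big\|\Big(\sum_{j}(M_{n}\cdots M_{1}g_{j})^{\gamma}\Big)^{1/\gamma}\Big\|_{\dot{E}_{\vec{p}/t}^{t\vec{\alpha},\vec{q}/t}} \;\lesssim\; \Big\|\Big(\sum_{j}|g_{j}|^{\gamma}\Big)^{1/\gamma}\Big\|_{\dot{E}_{\vec{p}/t}^{t\vec{\alpha},\vec{q}/t}}.
\end{equation*}
The hypothesis $0<t<\min(\beta,p_{-},q_{-})$ forces $\gamma,\,p_{i}/t,\,q_{i}/t>1$, and, shrinking $t$ if necessary (which is permitted since $\mathcal{M}_{s}\leq \mathcal{M}_{t}$ for $s\leq t$ by Jensen), the rescaled parameters $t\alpha_{i}$ stay in the Herz-admissible window $(-1/(p_{i}/t),\,1-1/(p_{i}/t))$.

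Second, I would establish the reduced (linear) inequality by iterating the one-dimensional vector-valued Fefferman--Stein maximal inequality on $\dot{K}_{p_{i}}^{\alpha_{i},q_{i}}$---a classical result available in \cite{XuYang05} and its antecedents---separately in each coordinate. Unfolding the outer quasi-norm as an iterated 1D Herz quasi-norm, I would peel $M_{1}$ off the innermost factor via the 1D inequality in $x_{1}$ (using that $M_{1}$ commutes with $M_{2},\ldots,M_{n}$ because they act on disjoint coordinates), then peel $M_{2}$ off the next layer via the 1D inequality in $x_{2}$, and so on through $M_{n}$, accumulating only multiplicative constants at each stage.

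The principal obstacle lies in this iteration step: at each layer one must interchange the outer Herz quasi-norms acting on $x_{j}$ for $j\neq i$ with the 1D Fefferman--Stein inequality applied in $x_{i}$. In the mixed-Lebesgue case this is immediate from Minkowski's integral inequality whenever $p_{i}\geq 1$, but in the Herz setting one must argue directly with the annular-sum structure $\big(\sum_{k_{i}}2^{k_{i}\alpha_{i}q_{i}}\|\cdot\,\chi_{k_{i}}\|_{p_{i}}^{q_{i}}\big)^{1/q_{i}}$ of the quasi-norm and exploit positivity of the maximal operators to bypass the lack of a clean duality when $q_{i}<1$. These interchange estimates are the technical core of \cite{ZYZ22}, and once they are granted the two-step reduction produces the stated bound.
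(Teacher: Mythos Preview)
The paper does not prove this lemma at all; it simply quotes it from \cite{ZYZ22}. Your two-step plan --- the $t$-power rescaling $\|F\|_{\dot{E}_{\vec{p}}^{\vec{\alpha},\vec{q}}}^{t}=\big\||F|^{t}\big\|_{\dot{E}_{\vec{p}/t}^{t\vec{\alpha},\vec{q}/t}}$ followed by a coordinate-by-coordinate iteration of the one-dimensional vector-valued Fefferman--Stein inequality on Herz spaces --- is precisely the route taken in \cite{ZYZ22}, so your proposal agrees with the cited source rather than offering an alternative.

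One correction is needed. Your ``shrinking $t$ if necessary'' device is oriented the wrong way: Jensen's inequality gives $\mathcal{M}_{s}\leq\mathcal{M}_{t}$ for $s\leq t$, so the estimate for a \emph{smaller} $t$ does \emph{not} imply the estimate for a larger one. After rescaling, the upper Herz constraint becomes $t(\alpha_{i}+1/p_{i})<1$, which the hypothesis $\alpha_{i}+1/p_{i}<1$ guarantees only when $t\leq 1$. This is not a defect of your argument so much as a looseness in the quoted statement: in every place where the paper actually invokes Lemma~\ref{Maximal-Inq} (e.g.\ Lemmas~\ref{lamda-equi copy1} and~\ref{lamda-equi}) the additional restrictions $t<1$ and $t<1/(\alpha_{i}+1/p_{i})$ are imposed explicitly, and under those restrictions your reduction goes through without modification. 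Apart from this point your outline is correct.
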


Let $(y_{1},\dots,y_{n})\in \mathbb{R}^{n}$ and $i\in \{1,\dots,n\}$. We set 
\begin{equation*}
\hat{y}_{i}=(y_{i},\dots,y_{n})\in \mathbb{R}^{n-i+1}, \quad \text{and}
\quad \check{y}_{i}=(y_{1},\dots,y_{i})\in \mathbb{R}^{i}.
\end{equation*}

Let $i\in \{1,\dots,n-1\}$, $d>0$, $(y_{2},\dots,y_{n})\in \mathbb{R}^{n-1}$%
, $0<\vec{p},\vec{q}\leq \infty$, and $\vec{\alpha}\in \mathbb{R}^{n}$. For $%
f\in \dot{E}_{\vec{p}}^{\vec{\alpha},\vec{q}}(\mathbb{R}^{n})$, we recall
that 
\begin{equation*}
\big\|\cdot \cdot \cdot \big\|f\big\|_{\dot{K}_{p_{1}}^{\alpha
_{1},q_{1}}}\cdot \cdot \cdot \big\|_{\dot{K}_{p_{i}}^{\alpha _{i},q_{i}}}(%
\hat{y}_{i+1}),
\end{equation*}%
is a function of $n-i$ variables (the last $n-i$ variables of $f$). is a
function of $n-i$ variables, namely the last $n-i$ variables of $f$.

\subsection{Plancherel-P\'{o}lya-Nikolskij inequality}

The classical Plancherel-P\'{o}lya-Nikol'skij inequality (see \cite[1.3.2/5,
Remark 1.4.1/4]{T83}) asserts that 
\begin{equation*}
\|f\|_{q} \leq c\,R^{\,n\left(\frac1p-\frac1q\right)}\|f\|_{p}, 
\end{equation*}
for all $0<p\leq q\leq \infty$, $R>0$, and every $f\in L^{p}(\mathbb{R}%
^{n})\cap \mathcal{S}^{\prime }(\mathbb{R}^{n})$ satisfying 
\begin{equation*}
\operatorname{supp}\mathcal{F}f
\subset \{\xi\in\mathbb{R}^{n}: |\xi|\le R\}.
\end{equation*}
The constant $c>0$ is independent of $R$. This inequality plays a
fundamental role in the theory of function spaces and partial differential
equations. The present subsection is devoted to extending this result to
mixed-norm Herz spaces. The following lemma is essential for our analysis;
see \cite[Lemma A.7]{DHR}.

\begin{lemma}
\label{r-trick}  Let $r>0$, $j\in\mathbb{N}_{0}$, and $m>n$. Then there
exists a constant  $c=c(r,m,n)>0$ such that for every  $f\in\mathcal{S}%
^{\prime }(\mathbb{R}^{n})$ with  
\begin{equation*}
\operatorname{supp}\mathcal{F}f
\subset
\left\{\xi\in\mathbb{R}^{n}:|\xi|\le 2^{j+1}\right\}.
\end{equation*}
we have  
\begin{equation*}
|f(x)|  \leq  c\Bigl(\eta_{j,\frac{m}{n}}*|g|^{r}(x)\Bigr)^{1/r},  \qquad
x\in\mathbb{R}^{n}.  
\end{equation*}
\end{lemma}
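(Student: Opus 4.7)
The strategy is to prove the estimate in two stages. First, I would establish the linear pointwise bound
\[
|g(x)| \leq C_N (\eta_{j,N} \ast |g|)(x), \qquad N > 0,
\]
via a Paley--Wiener reproducing formula. Second, I would convert this into the stated nonlinear inequality involving $|g|^{r}$; this splits cleanly into the direct case $r \geq 1$, handled by Jensen's inequality, and the genuine $r$-trick for $0 < r < 1$ via a Peetre-type maximal function.

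For the linear bound, fix $\phi \in \mathcal{S}(\mathbb{R}^{n})$ with $\mathcal{F}\phi \equiv 1$ on $\{|\xi| \leq 2\}$ and $\mathrm{supp}\,\mathcal{F}\phi \subset \{|\xi| \leq 4\}$, and set $\phi_{j}(x) = 2^{jn}\phi(2^{j}x)$. The frequency-support hypothesis on $g$ gives the reproducing identity $g = g \ast \phi_{j}$ in $\mathcal{S}'$. Since $\phi \in \mathcal{S}$ and $\prod_{i}(1+|z_{i}|)^{N}|\phi(z)|$ is a finite sum of uniformly bounded quantities of the form $|z^{\alpha}\phi(z)|$, we have $|\phi(z)| \leq C_{N} \prod_{i}(1+|z_{i}|)^{-N}$ for every $N > 0$; rescaling gives $|\phi_{j}(z)| \leq C_{N}\,\eta_{j,N}(z)$, from which the linear bound follows. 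For $r \geq 1$, applying this with $N = m/n$ and using Jensen's inequality against the probability measure $\eta_{j,m/n}/\|\eta_{j,m/n}\|_{1}\,dy$ (whose mass is finite and $j$-independent, since $m > n$ implies $\int_{\mathbb{R}}(1+|s|)^{-m/n}\,ds < \infty$) immediately yields $|g(x)|^{r} \leq C (\eta_{j,m/n}\ast|g|^{r})(x)$.

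The case $0 < r < 1$ is the main obstacle, since Jensen runs in the wrong direction. I would introduce the Peetre-type maximal function
\[
g^{\ast}(x) := \sup_{z \in \mathbb{R}^{n}} |g(z)| \prod_{i=1}^{n}(1+2^{j}|x_{i}-z_{i}|)^{-m/(rn)}.
\]
Applying the linear bound at $z$ with exponent $N = m/(rn)$, splitting $|g(y)| = |g(y)|^{r}|g(y)|^{1-r}$, and using $|g(y)|^{1-r} \leq (g^{\ast}(z))^{1-r}\prod_{i}(1+2^{j}|z_{i}-y_{i}|)^{m(1-r)/(rn)}$, the exponents inside the convolution collapse to $m/n$, producing
\[
|g(z)| \leq C (g^{\ast}(z))^{1-r} (\eta_{j,m/n}\ast|g|^{r})(z).
\]
Transporting this from $z$ back to $x$ via the submultiplicative inequality $1+2^{j}|x_{i}-y_{i}| \leq (1+2^{j}|x_{i}-z_{i}|)(1+2^{j}|z_{i}-y_{i}|)$ gives both $g^{\ast}(z) \leq \prod_{i}(1+2^{j}|x_{i}-z_{i}|)^{m/(rn)} g^{\ast}(x)$ and $(\eta_{j,m/n}\ast|g|^{r})(z) \leq \prod_{i}(1+2^{j}|x_{i}-z_{i}|)^{m/n}(\eta_{j,m/n}\ast|g|^{r})(x)$. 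The resulting powers of $(1+2^{j}|x_{i}-z_{i}|)$ combine to exactly $m/(rn)$, cancelling the factor from the definition of $g^{\ast}$ upon taking $\sup_{z}$, whence
\[
g^{\ast}(x) \leq C\,(g^{\ast}(x))^{1-r}(\eta_{j,m/n}\ast|g|^{r})(x).
\]
Absorbing $(g^{\ast}(x))^{1-r}$ yields $|g(x)| \leq g^{\ast}(x) \leq C(\eta_{j,m/n}\ast|g|^{r}(x))^{1/r}$.

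The only subtlety beyond exponent bookkeeping is the a priori finiteness $g^{\ast}(x) < \infty$ needed to justify the absorption step. I would establish this by a standard approximation: smoothen $g$ by convolution with a compactly supported mollifier, for which the truncated version is Schwartz and the corresponding $g^{\ast}$ is manifestly finite, run the estimates uniformly, and pass to the limit. Alternatively, the tempered growth $|g(y)| \leq C(1+|y|)^{M}$ of band-limited tempered distributions gives finiteness of the analogous maximal function with a sufficiently large weight exponent, from which the desired case can be reached by a single bootstrap iteration.
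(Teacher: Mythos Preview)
The paper does not supply its own proof of this lemma; it simply quotes it from \cite[Lemma~A.7]{DHR}. Your argument is correct and is essentially the standard proof found there: the reproducing identity $g=g\ast\phi_{j}$ yields the linear convolution bound, Jensen handles $r\geq 1$, and for $0<r<1$ the Peetre-type maximal function together with the submultiplicativity of the weights $\prod_{i}(1+2^{j}|x_{i}-z_{i}|)$ closes the bootstrap. The exponent bookkeeping you carry out (collapsing $-m/(rn)+m(1-r)/(rn)=-m/n$ inside the convolution, and checking that the transported powers cancel exactly after taking $\sup_{z}$) is accurate, and your use of the product kernel matches the paper's definition of $\eta$. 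The only soft spot is the justification of $g^{\ast}(x)<\infty$; both routes you sketch are valid, but the quickest is to invoke Paley--Wiener--Schwartz, which gives that a band-limited tempered distribution is an entire function with at most polynomial growth, so the weighted supremum is finite for any sufficiently large exponent and one bootstrap step brings you down to $m/(rn)$.
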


Let 
\begin{equation*}
I_{1,k}=[-2^{k},2^{k}], \qquad I_{2,k}=\Bigl[\frac{2^{k-1}}{\sqrt n},\,2^{k}%
\Bigr], \qquad k\in\mathbb{Z}, 
\end{equation*}
and 
\begin{equation*}
I_{3,k} = \Bigl[-2^{k},-\frac{2^{k-1}}{\sqrt n}\Bigr], \qquad I_{4,k} = %
\Bigl(-\frac{2^{k-1}}{\sqrt n},\,\frac{2^{k-1}}{\sqrt n}\Bigr), \qquad k\in%
\mathbb{Z}. 
\end{equation*}
Define 
\begin{equation*}
J_{k} = \bigcup_{l=1}^{n-1}V_{l,k}\,\cup\,V_{k}, \qquad k\in\mathbb{Z}, 
\end{equation*}
where 
\begin{equation*}
V_{k} = \bigl(I_{2,k}\times(I_{4,k})^{n-1}\bigr)
\cup \bigl(I_{3,k}\times(I_{4,k})^{n-1}\bigr), 
\end{equation*}
and 
\begin{equation*}
V_{l,k} = V_{l,k}^{1}\cup V_{l,k}^{2}, \qquad k\in\mathbb{Z}, \quad
l\in\{1,\ldots,n-1\}, 
\end{equation*}
with 
\begin{equation*}
V_{l,k}^{1} = (I_{1,k})^{\,n-l}\times I_{2,k}\times (I_{4,k})^{\,l-1}, 
\end{equation*}
and 
\begin{equation*}
V_{l,k}^{2} = (I_{1,k})^{\,n-l}\times I_{3,k}\times (I_{4,k})^{\,l-1}. 
\end{equation*}
In the special case $l=1$, we set 
\begin{equation*}
V_{1,k}^{1} = (I_{1,k})^{\,n-1}\times I_{2,k}, \qquad V_{1,k}^{2} =
(I_{1,k})^{\,n-1}\times I_{3,k}. 
\end{equation*}

The following result is taken from \cite{Dr-Sobolev}.

\begin{lemma}
\label{decomposition1}  Let $k\in\mathbb{Z}$. Then  
\begin{equation*}
C_{k}\subset J_{k}\subset \widetilde C_{k},  
\end{equation*}
where  
\begin{equation*}
\widetilde C_{k}  =  \left\{  x\in\mathbb{R}^{n}  :  \frac{2^{k-3}}{\sqrt n}
\le |x|  \le  \sqrt n\,2^{k+4}  \right\}.  
\end{equation*}
\end{lemma}

Let $h,m\in \mathbb{N}$ be such that $1\leq h\leq n-m+1$. Let $\vec{\alpha}%
=(\alpha _{1},\ldots ,\alpha _{n})\in \mathbb{R}^{n}$ and $0<\vec{p}\leq
\infty $. We set 
\begin{equation*}
\frac{1}{\mathbf{p}%
_{h,h+m-1}}=\sum_{i=h}^{h+m-1}\frac{1}{p_{i}},
\end{equation*}%
and 
\begin{equation*}
\boldsymbol{\alpha}_{h,h+m-1}=\sum_{i=h}^{h+m-1}\alpha _{i}.
\end{equation*}%
In particular, if $h=1$ and $m=n$, we write 
\begin{equation*}
\mathbf{p}_{1,n}=\mathbf{p},\qquad \boldsymbol{\alpha}_{1,n}=\boldsymbol{\alpha}.
\end{equation*}
Let $i\in \{1,...,n\}$. We set $\digamma _{i}( p_{i},\alpha_{i})=\min \Big(p_{i},\frac{1}{\frac{%
		1}{p_{i}}+\alpha _{i}}\Big).$

Let $h,m\in\mathbb{N}$ with $1\le h\le n-m+1$. Throughout the next lemma and
remark, we define 
\begin{equation*}
\widetilde{\eta}_{R,N}(\widetilde z) = R^m \prod_{i=h}^{h+m-1}
(1+R|z_i|)^{-N}, \qquad \widetilde z=(z_h,\ldots,z_{h+m-1})\in\mathbb{R}^m . 
\end{equation*}

\begin{lemma}
\label{key1 copy(2)}  Let $R>0$ and let $h,m,n\in\mathbb{N}$ satisfy $1\le
h\le n-m+1$. Let $l\in\{1,4\}$, $v\in\mathbb{Z}$, and $x\in\mathbb{R}^n$.
Assume that  $0<\vec p\le\infty$, $\vec{\alpha}=(\alpha_1,\ldots,\alpha_n)\in%
\mathbb{R}^n$, and  $f\in \dot E_{\vec p}^{\vec\alpha,\infty}(\mathbb{R}^n)$%
.  Then  
\begin{align}
&\int_{(I_{l,v})^m} |f(y)|^d \widetilde{\eta}_{R,N}(\widetilde x-\widetilde
y) \,d\widetilde y  \notag \\
&\lesssim R^m\, 2^{-v\left( \frac1{\mathbf{p}_{h,h+m-1}} +\boldsymbol{\alpha}%
_{h,h+m-1} \right)d+mv} \, \big\| \cdots \|f\|_{\dot
K_{p_h}^{\alpha_h,\infty}} \cdots \big\|_{\dot
K_{p_{h+m-1}}^{\boldsymbol{\alpha}_{h+m-1},\infty}}^d,  \label{constant}
\end{align}
where  $0<d<\min_{h\le i\le h+m-1} \digamma _{i}( p_{i},\alpha_{i})  
$ and the implicit constant is independent of $R$, $v$, and $x$. Here  
\begin{equation*}
\widetilde y=(y_h,\ldots,y_{h+m-1}),\qquad  \widetilde
x=(x_h,\ldots,x_{h+m-1})\in\mathbb{R}^m.  
\end{equation*}
\end{lemma}

\begin{proof}
Since the cases $l=1$ and $l=4$ are analogous, it suffices to consider  $l=1$%
. We proceed by induction on $m$.

\medskip

\noindent  \textit{Step 1. The case $m=1$.} We have  
\begin{align}
\int_{I_{1,v}} |f(y)|^d \eta_{h,R,N}(x_h-y_h)\,dy_h &\le \sum_{j=0}^{\infty}
\int_{R_{(v-j+1)_h}} |f(y)|^d \eta_{h,R,N}(x_h-y_h)\,dy_h  \notag \\
&= \sum_{j=0}^{\infty} V_{j,v}^{1}(x_h),  \label{aux3bernstien2}
\end{align}
where  
\begin{equation*}
R_{(v-j+1)_h}  =  \{t\in\mathbb{R}:  2^{\,v-j}\le |t|<2^{\,v-j+1}\}.  
\end{equation*}

Applying H\"{o}lder's inequality with  
\begin{equation*}
\frac1d=\frac1{p_h}  +\left(\frac1d-\frac1{p_h}\right),  
\end{equation*}
we obtain  
\begin{align}
\sum_{j=0}^{\infty}V_{j,v}^{1}(x_h) &\lesssim R \sum_{k_h=-\infty}^{v+1} %
\big\| f(y_1,\ldots,y_{h-1},\cdot_h,y_{h+1},\ldots,y_n)
\chi_{R_{k_h}}(\cdot_h) \big\|_{L^{{d}}(\mathbb R)}^d  \notag \\
&\lesssim R \sum_{k_h=-\infty}^{v+1} 2^{d k_h(\frac1d-\frac1{p_h})} \big\| %
f(y_1,\ldots,y_{h-1},\cdot_h,y_{h+1},\ldots,y_n) \chi_{R_{k_h}}(\cdot_h) %
\big\|_{L^{{p_{h}}}(\mathbb R)}^{d}  \notag \\
&\lesssim R\, 2^{-v\left(\frac1{p_h}+\alpha_h\right)d+v} \, \|f\|_{\dot
K_{p_h}^{\alpha_h,\infty}}^d,  \label{aux3bernstien1}
\end{align}
where we have used the assumption $
\frac1d>\frac1{p_h}+\alpha_h. $

Combining \eqref{aux3bernstien2} and \eqref{aux3bernstien1}, we conclude
that  
\begin{equation*}
\int_{I_{1,v}}  |f(y)|^d  \eta_{h,R,N}(x_h-y_h)\,dy_h  \lesssim  R\, 
2^{-v\left(\frac1{p_h}+\alpha_h\right)d+v}  \,  \|f\|_{\dot
K_{p_h}^{\alpha_h,\infty}}^d.  
\end{equation*}

\medskip

\noindent  \textit{Step 2. Induction step.}

Assume that the assertion holds for some $m\ge1$. Define  
\begin{equation*}
\widehat{\eta}_{R,N}(\widehat z)  =  R^{m+1}  \prod_{i=h}^{h+m} 
(1+R|z_i|)^{-N},  \qquad  \widehat z=(z_h,\ldots,z_{h+m})  \in\mathbb{R}%
^{m+1}.  
\end{equation*}

Then  
\begin{equation*}
\int_{(I_{l,v})^{m+1}}  |f(y)|^d  \widehat{\eta}_{R,N}(\widehat x-\widehat
y)  \,d\widehat y  
\end{equation*}
can be written as  
\begin{equation*}
\int_{I_{l,v}}  \eta_{h+m,R,N}(x_{h+m}-y_{h+m})  \left(  \int_{(I_{l,v})^m} 
|f(y)|^d  \widetilde{\eta}_{R,N}(\widetilde x-\widetilde y)  \,d\widetilde y
\right)  dy_{h+m}.  
\end{equation*}

By the induction hypothesis, this is bounded by  
\begin{align*}
& R^m 2^{-v\left( \frac1{\mathbf{p}_{h,h+m-1}} +\boldsymbol{\alpha}_{h,h+m-1}
\right)d+mv} \\
&\qquad\times \int_{I_{l,v}} \big\|\cdots \|f\|_{\dot
K_{p_h}^{\alpha_h,\infty}} \cdots \big\|_{\dot
K_{p_{h+m-1}}^{\alpha_{h+m-1},\infty}}^d \, \eta_{h+m,R,N}(x_{h+m}-y_{h+m})
\,dy_{h+m}.
\end{align*}

Applying Step~1 to the variable $y_{h+m}$, we obtain that the last term is
bounded by  
\begin{align*}
&c R^{m+1} 2^{-v\left( \frac1{\mathbf{p}_{h,h+m-1}} +\boldsymbol{\alpha}%
_{h,h+m-1} \right)d+mv} 2^{-v\left( \frac1{p_{h+m}} +\alpha_{h+m} \right)d+v}
\\
&\qquad\times \big\|\cdots \|f\|_{\dot K_{p_h}^{\alpha_h,\infty}} \cdots %
\big\|_{\dot K_{p_{h+m}}^{\alpha_{h+m},\infty}}^d \\
&=c R^{m+1} 2^{-v\left( \frac1{\mathbf{p}_{h,h+m}} +\boldsymbol{\alpha}_{h,h+m}
\right)d+(m+1)v} \big\|\cdots \|f\|_{\dot K_{p_h}^{\alpha_h,\infty}} \cdots %
\big\|_{\dot K_{p_{h+m}}^{\alpha_{h+m},\infty}}^d.
\end{align*}

This completes the induction and hence the proof.
\end{proof}

\begin{remark}
\label{Key2}  \textnormal{(i)} Under the same assumptions as in Lemma \ref%
{key1 copy(2)}, we similarly obtain  
\begin{align*}
& \int_{(I_{l,v})^{m}} \big\|\cdots \big\|f\big\|_{\dot{K}%
_{p_{1}}^{\alpha_{1},\infty}} \cdots \big\|_{\dot{K}_{p_{h-1}}^{%
\alpha_{h-1},\infty}}^{d} \,\tilde{\eta}_{R,N}(\tilde{x}-\tilde{y}) \,d%
\tilde{y} \\
& \qquad\lesssim R^{m}\, 2^{-v\left(\frac{1}{\mathbf{p}_{h,h+m-1}} +\boldsymbol{\alpha}_{h,h+m-1}\right)d+mv} \big\|\cdots \big\|f\big\|_{\dot{K}%
_{p_{1}}^{\alpha_{1},\infty}} \cdots \big\|_{\dot{K}_{p_{h+m-1}}^{%
\alpha_{h+m-1},\infty}}^{d}.
\end{align*}

\medskip

\noindent \textnormal{(ii)} Note that, in \eqref{constant}, the implicit
constant may depend on  
\begin{equation*}
(y_{1},\ldots,y_{h-1},y_{h+m},\ldots,y_{n}).  
\end{equation*}
\end{remark}

\begin{lemma}
\label{Key-est1}  \textit{Let } $\vec{\alpha}=(\alpha_1,\ldots,\alpha_n)\in%
\mathbb{R}^n$, $0<\vec p,\vec
q\leq\infty$, \textit{and }  $R\geq H>0$. \textit{Then there exists a
constant } $c>0$,  \textit{independent of } $R$ \textit{and } $H$, \textit{%
such that for every }  $f\in \dot{E}_{\vec p}^{\vec\alpha,\vec q}(\mathbb{R}%
^{n})  \cap \mathcal{S}^{\prime }(\mathbb{R}^{n})$  \textit{with}  
\begin{equation*}
\operatorname{supp}\mathcal{F}f
\subset
\bigl\{\xi\in\mathbb{R}^{n}: |\xi|\le R\bigr\},
\end{equation*}
\textit{one has}  
\begin{equation*}
\sup_{x\in B(0,\frac{1}{H})}|f(x)|  \le  c\Big(\frac{R}{H}\Big)^{\frac{n}{d}}  H^{%
\frac{1}{\mathbf{p}}+\boldsymbol{\alpha}}  \|f\|_{\dot{E}_{\vec
p}^{\vec\alpha,\vec q}(\mathbb{R}^{n})},  
\end{equation*}
\textit{for every}  
$0<d<\min_{1\le i\le n} \digamma _{i}( p_{i},\alpha_{i}) $.

\end{lemma}

\begin{proof}
Let $v \in \mathbb{Z}$ be such that $2^{v-2} \leq H^{-1} < 2^{v-1}$. By
Lemma~\ref{r-trick}, we have for any $d,R>0$, $N>\frac{n}{d}$, and any $x
\in B(0,\frac{1}{H})$, 
\begin{equation*}
|f(x)| \lesssim \Bigg(\int_{\mathbb{R}^{n}} |f(y)|^{d}\, \eta_{R,dN}(x-y)\,
dy \Bigg)^{\frac{1}{d}} \lesssim I_{1} + I_{2},
\end{equation*}
where 
\begin{equation*}
I_{1} = \Bigg(\int_{\overline{B(0,\frac{4}{H})}} |f(y)|^{d}\,
\eta_{R,dN}(x-y)\, dy \Bigg)^{\frac{1}{d}}
\end{equation*}
and 
\begin{equation*}
I_{2} = \Bigg(\int_{\mathbb{R}^{n} \setminus \overline{B(0,\frac{4}{H})}}
|f(y)|^{d}\, \eta_{R,dN}(x-y)\, dy \Bigg)^{\frac{1}{d}}.
\end{equation*}

\textit{Estimate of $I_{1}$.} Observe that $\overline{B(0,\frac{4}{H})}
\subset (I_{1,v+1})^{n}$. Then Lemma~\ref{key1 copy(2)}, with $m=n$ and $h=1$%
, yields 
\begin{align*}
(I_{1})^{d} & \leq \int_{(I_{1,v+1})^{n}} |f(y)|^{d}\, \eta_{R,dN}(x-y)\, dy
\\
& \lesssim R^{n}\, 2^{-v\left(\frac{1}{\mathbf{p}}+\boldsymbol{\alpha}\right)d +
nv} \|f\|_{\dot{K}_{\vec{p}}^{\vec{\alpha},\infty}}^{d} \\
& \lesssim \left(\frac{R}{H}\right)^{n} H^{\left(\frac{1}{\mathbf{p}}+%
\boldsymbol{\alpha}\right)d} \|f\|_{\dot{K}_{\vec{p}}^{\vec{\alpha},\infty}}^{d}.
\end{align*}
\textit{Estimate of }$I_{2}$. Let $x\in B\!\left(0,\frac{1}{H}\right)$. We
have the following decomposition 
\begin{align}
(I_{2})^{d} &\leq \int_{\mathbb{R}^{n}\setminus \overline{B(0,2^{v})}}
|f(y)|^{d}\,\eta_{R,dN}(x-y)\,dy  \notag \\
&\leq \sum_{j=0}^{\infty} \int_{C_{j+v+1}} |f(y)|^{d}\,\eta_{R,dN}(x-y)\,dy .
\label{int}
\end{align}

By Lemma \ref{decomposition1}, we have 
\begin{equation*}
C_{j+v+1}\subset J_{j+v+1}\subset \tilde{C}_{j+v+1}. 
\end{equation*}

The integral in \eqref{int} can be estimated from above by 
\begin{equation*}
\tilde{J}_{j,v}^{1}(x)+\tilde{J}_{j,v}^{2}(x) +\sum_{l=1}^{n-1}\big(%
J_{j,v,l}^{1}(x)+J_{j,v,l}^{2}(x)\big),
\end{equation*}
where 
\begin{align*}
J_{j,v,l}^{1}(x) &=\int_{V_{l,j+v+1}^{1}} |f(y)|^{d}\,\eta_{R,dN}(x-y)\,dy,
\quad l\in\{1,\dots,n-1\}, \\
J_{j,v,l}^{2}(x) &=\int_{V_{l,j+v+1}^{2}} |f(y)|^{d}\,\eta_{R,dN}(x-y)\,dy,
\quad l\in\{1,\dots,n-1\}, \\
\tilde{J}_{j,v}^{1}(x) &=\int_{I_{2,j+v+1}\times (I_{4,j+v+1})^{n-1}}
|f(y)|^{d}\,\eta_{R,dN}(x-y)\,dy, \\
\tilde{J}_{j,v}^{2}(x) &=\int_{I_{3,j+v+1}\times (I_{4,j+v+1})^{n-1}}
|f(y)|^{d}\,\eta_{R,dN}(x-y)\,dy.
\end{align*}

It suffices to estimate $J_{j,v,l}^{1},
J_{j,v,l}^{2}$ for $l\in\{1,\dots,n-1\}$ and $\tilde{J}_{j,v}^{1}, \tilde{J}%
_{j,v}^{2}$. Clearly, it is enough to estimate $J_{j,v,l}^{1}$ and $\tilde{J}%
_{j,v}^{1}$.

\textit{Estimate of }$J_{j,v,l}^{1}$, $l\in\{1,\dots,n-1\}$.

\textit{Case 1.} Let $j\in\mathbb{N}$ such that $2^{j}>\sqrt{n}$. Then for
any $y_{n-l+1}\in I_{2,j+v+1}$ and $x\in B(0,2^{v-1})$, we have 
\begin{equation*}
|x_{n-l+1}-y_{n-l+1}| >\frac{1}{\sqrt{n}}2^{j+v-1}, 
\end{equation*}
hence for any $d>0$, $N\in\mathbb{N}$, 
\begin{equation}
\eta_{n-l+1,R,dN}(x_{n-l+1}-y_{n-l+1}) \leq R(2^{j+v}R)^{-dN} \leq 2^{-jdN}R.
\label{est-neu}
\end{equation}

Using Lemma \ref{key1 copy(2)}, we obtain 
\begin{align*}
&\int_{(I_{1,j+v+1})^{n-l}} |f(y)|^{d}\, \tilde{\eta}_{R,dN}(\tilde{x}-%
\tilde{y})\,d\tilde{y} \\
&\quad \lesssim R^{\,n-l} H^{\left(\frac{1}{\mathbf{p}_{1,n-l}}+\boldsymbol{\alpha}_{1,n-l}\right)d-n+l} \big\|\cdot \cdot \cdot \big\|f\big\|_{\dot{K}%
_{p_{1}}^{\alpha _{1},\infty }}\cdot \cdot \cdot \big\|_{\dot{K}%
_{p_{n-l}}^{\alpha _{n-l},\infty }}^{d},
\end{align*}
where 
\begin{equation*}
\tilde{\eta}_{R,dN}(\tilde{z}) =R^{n-l}\prod_{i=1}^{n-l}(1+R|z_i|)^{-dN},
\quad \tilde{z}=(z_1,\ldots,z_{n-l}) \in\mathbb{R}^{n-l}. 
\end{equation*}

Therefore, 
\begin{align*}
&\int_{I_{2,j+v+1}}\int_{(I_{1,j+v+1})^{n-l}} |f(y)|^{d}\tilde{\eta}_{R,dN}(%
\tilde{x}-\tilde{y}) \eta_{n-l+1,R,dN}(x_{n-l+1}-y_{n-l+1}) \,d\tilde{y}%
\,dy_{n-l+1} \\
&\quad \lesssim \left(\frac{R}{H}\right)^{n-l} H^{\left(\frac{1}{\mathbf{p}%
_{1,n-l}}+\boldsymbol{\alpha}_{1,n-l}\right)d}
\\
& \times
 \int_{I_{2,j+v+1}}  \big\|\cdot \cdot \cdot \big\|f\big\|_{\dot{K}%
_{p_{1}}^{\alpha _{1},\infty }}\cdot \cdot \cdot \big\|_{\dot{K}%
_{p_{n-l}}^{\alpha _{n-l},\infty }}^{d},
\eta_{n-l+1,R,dN}(x_{n-l+1}-y_{n-l+1})\,dy_{n-l+1},
\end{align*}
which is bounded by, using \eqref{est-neu} and H\"{o}lder's inequality  with   $\frac1d=\frac1{p_n-l+1}  +\left(\frac1d-\frac1{p_n-l+1}\right),$ 
\begin{align*}
c 2^{-j(dN-1+\frac{d}{p_{n-l+1}}+d\alpha_{n-l+1})} \left(\frac{R}{H}%
\right)^{n-l+1} H^{\left(\frac{1}{\mathbf{p}_{1,n-l+1}}+\boldsymbol{\alpha}%
_{1,n-l+1}\right)d}
\big\|\cdot \cdot \cdot \big\|f\big\|_{\dot{K}%
	_{p_{1}}^{\alpha _{1},\infty }}\cdot \cdot \cdot \big\|_{\dot{K}%
	_{p_{n-l+1}}^{\alpha _{n-l+1},\infty }}^{d}.
\end{align*}

Again by Lemma \ref{key1 copy(2)}, we obtain%
\begin{align*}
& \int_{(I_{4,j+v+1})^{l-1}}\big\|\cdot \cdot \cdot \big\|f\big\|_{\dot{K}%
	_{p_{1}}^{\alpha _{1},\infty }}\cdot \cdot \cdot \big\|_{\dot{K}%
	_{p_{n-l+1}}^{\alpha _{n-l+1},\infty }}^{d}\check{\eta}_{R,dN}(\check{x}-%
\check{y})d\check{y} \\
& \lesssim R^{l-1}\text{ }2^{-v(\frac{1}{\mathbf{p}_{n-l+2,n}}+\boldsymbol{\alpha}_{n-l+2,n})d+(l-1)v}\big\|f\big\|_{\dot{K}_{\vec{p}}^{\vec{\alpha}%
		,\infty }}^{d},
\end{align*}%
where 
\begin{equation*}
\check{\eta}_{R,dN}(\check{z})=R^{l-1}\prod_{i=n-l+2}^{n}(1+R|z_{i}|)^{-dN},%
\quad \check{z}=(z_{n-l+2},...,z_{n})\in \mathbb{R}^{l-1}.
\end{equation*}%

Consequently, 
\begin{equation}
J_{j,v,l}^{1} \lesssim 2^{-j(dN-1+\frac{d}{p_{n-l+1}}+d\alpha_{n-l+1})}
\left(\frac{R}{H}\right)^{n} H^{\left(\frac{1}{\mathbf{p}}+\boldsymbol{\alpha}%
\right)d} \|f\|_{\dot{E}_{\vec{p}}^{\vec{\alpha},\vec{\theta}}(\mathbb{R}%
^n)}^{d},  \label{case1}
\end{equation}
for $2^{j}>\sqrt{n}$.

\textit{Case 2.} If $2^{j}\leq \sqrt{n}$, then observe that
\begin{equation*}
I_{2,j+v+1}\cup I_{4,j+v+1}\subset I_{1,j+v+1}, 
\end{equation*}
and thus  by Lemma  \ref%
{key1 copy(2)}, we get
\begin{equation}
J_{j,v,l}^{1} \lesssim \left(\frac{R}{H}\right)^{n} H^{\left(\frac{1}{%
\mathbf{p}}+\boldsymbol{\alpha}\right)d} \|f\|_{\dot{E}_{\vec{p}}^{\vec{\alpha},%
\vec{\theta}}(\mathbb{R}^n)}^{d}.  \label{case2}
\end{equation}

\textit{Estimate of }$\tilde{J}_{j,v}^{1}$. Define 
\begin{equation*}
\eta_{R,dN}(\tilde{z}) =R^{n-1}\prod_{i=2}^{n}(1+R|z_i|)^{-dN},\quad 
\tilde{z}=(z_{2},...,z_{n}). 
\end{equation*}
\textit{Case 1.} Let $j\in\mathbb N$ satisfy $2^j>\sqrt n$. As in the
estimate of $J_{j,v,l}^{1}$, we obtain
\begin{align*}
\int_{I_{2,j+v+1}}|f(y)|^{d}\eta _{1,R,dN}(x_{1}-y_{1})dy_{1}& \lesssim
2^{-jdN}R\int_{I_{2,j+v+1}}|f(y)|^{d}dy_{1} \\
& \lesssim R2^{-jdN}2^{(\frac{1}{d}-\frac{1}{p_{1}}-\alpha _{1})(j+v)d}\big\|%
f\big\|_{\dot{K}_{p_{1}}^{\alpha _{1},\infty }}^{d},
\end{align*}%
where the second inequality follows from H\"{o}lder's inequality. Combining
this estimate with Lemma~\ref{key1 copy(2)} and
Remark~\ref{Key2}/(i), we deduce that
\begin{align}
\tilde{J}_{j,v}^{1}& \lesssim R2^{-jdN}2^{(\frac{1}{d}-\frac{1}{p_{1}}%
	-\alpha _{1})(j+v)d}\int_{(I_{4,j+v})^{n-1}}\big\|f\big\|_{\dot{K}%
	_{p_{1}}^{\alpha _{1},\infty }}^{d}\eta _{R,dN}(\tilde{x}-\tilde{y})d\tilde{y%
}  \notag \\
& \lesssim 2^{-j(dN-1+\frac{d}{p_{1}}+\alpha _{1}d)}\big(\frac{R}{H}\big)%
^{n}H^{(\frac{1}{\mathbf{p}}+\boldsymbol{\alpha})d}\big\|\cdot \cdot \cdot %
\big\|f\big\|_{\dot{K}_{p_{1}}^{\alpha _{1},\infty }}\cdot \cdot \cdot \big\|%
_{\dot{K}_{pn}^{\alpha _{n},\infty }}^{d}.  \label{case3}
\end{align}%
for any $j\in \mathbb{N}$ such that $2^{j}>\sqrt{n}.$

\textit{Case 2.}  Let $j\in\mathbb N$ satisfy $2^j\le \sqrt n$. Arguing as
in Case~2 of the estimate of $J_{j,v,l}^{1}$, we obtain that $\tilde{J}%
_{j,v}^{1}$ can be estimated from above by 
\begin{equation}
c\big(\frac{R}{H}\big)^{n}H^{(\frac{1}{\mathbf{p}}+\boldsymbol{\alpha} )d}\big\|f%
\big\|_{\dot{E}_{\vec{p}}^{\vec{\alpha}_{2},\vec{\theta}}(\mathbb{R}%
	^{n})}^{d}.  \label{case4}
\end{equation}%

Collecting \eqref{case1}--\eqref{case4}, and choosing $N$ sufficiently
large, we obtain the desired result. The proof is complete.
\end{proof}

Let $x_{1}\in \mathbb{R}$, $y=(y_{1},\ldots,y_{n})\in \mathbb{R}^{n}$, and $%
f\in \mathfrak{M}(\mathbb{R}^{n})$. We define 
\begin{equation*}
\eta_{1,R,N} * |f|^{d}(x_{1},\hat{y}_{2}) := \int_{\mathbb{R}}
|f(y)|^{d}\,\eta_{1,R,N}(x_{1}-y_{1})\,dy_{1},
\end{equation*}
that is, the partial convolution with respect to the variable $x_{1}$ (i.e.,
convolution in the first coordinate of $f$).

Let $i\in \{1,\ldots,n-1\}$ and $f\in \mathfrak{M}(\mathbb{R}^{n})$. We
define inductively 
\begin{equation*}
\eta_{i,R,N} * \cdots * \eta_{1,R,N} * |f|^{d}(\check{x}_{i},\hat{y}_{i+1})
:= \int_{\mathbb{R}} \eta_{i,R,N}(x_{i}-y_{i})\, \big(\eta_{i-1,R,N} *
\cdots * \eta_{1,R,N} * |f|^{d}\big)(\check{x}_{i-1},\hat{y}_{i})\,dy_{i},
\end{equation*}
where $\check{x}_{i}=(x_{1},\ldots,x_{i})$.

The following lemma can be obtained using the $\dot{K}_{p}^{\alpha,q}$%
-version of the Plancherel-P\'{o}lya-Nikol'skij inequality; see \cite%
{Drihem1.13}.

\begin{lemma}
\label{key1}  Let $\alpha_{1}^{1},\alpha_{1}^{2}\in \mathbb{R}$ and $%
0<q_{1},r_{1}\leq \infty$. Suppose that  
\begin{equation*}
\alpha_{1}^{1}+\frac{1}{s_{1}}>0,\qquad 0<p_{1}\leq s_{1}\leq \infty,\qquad
\alpha_{1}^{2}\geq \alpha_{1}^{1},  
\end{equation*}
and let $f\in \dot{E}_{\vec{p}}^{\vec{\alpha},\vec{\theta}}(\mathbb{R}^{n})$%
. Then we have  
\begin{equation*}
\big\|\big(\eta_{1,R,N} * |f|^{d}(\cdot,\hat{y}_{2})\big)^{1/d}\big\|_{\dot{K%
}_{s_{1}}^{\alpha_{1}^{1},r_{1}}} \lesssim R^{\frac{1}{p_{1}}-\frac{1}{s_{1}}%
+\alpha_{1}^{2}-\alpha_{1}^{1}} \, \|f\|_{\dot{K}_{p_{1}}^{\alpha_{1}^{2},%
\theta_{1}}}(\hat{y}_{2}),
\end{equation*}
for any sufficiently large $N$ and $0<d<\min\!\left(p_{1},\frac{1}{\frac{1}{%
p_{1}}+\alpha_{1}^{2}}\right)$, where  
\begin{equation*}
\theta_{1}= 
\begin{cases}
r_{1}, & \text{if } \alpha_{1}^{2}=\alpha_{1}^{1}, \\[4pt] 
q_{1}, & \text{if } \alpha_{1}^{2}>\alpha_{1}^{1},%
\end{cases}%
\end{equation*}
and the implicit constant is independent of $R$ and $\hat{y}_{2}$.
\end{lemma}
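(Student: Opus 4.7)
The plan is to reduce the proof to a dyadic decomposition in the first variable, followed by standard H\"older and discrete Young summation techniques. Since the statement concerns only the first variable (with the remaining ones frozen), throughout the sketch I treat $f$ as a one-variable function and drop $\hat y_2$ from the notation.

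First I would decompose the target Herz norm into dyadic annuli:
\[
\big\|(\eta_{1,R,N} \ast |f|^d)^{1/d}\big\|_{\dot K_{s_1}^{\alpha_1^1, r_1}}^{r_1} = \sum_{k \in \mathbb Z} 2^{k\alpha_1^1 r_1}\, \big\|(\eta_{1,R,N} \ast |f|^d)^{1/d}\,\chi_{R_k}\big\|_{s_1}^{r_1}.
\]
For each $x_1 \in R_k$, I further split the convolution integral over the dyadic rings $R_j$ in the $y_1$ variable:
\[
\eta_{1,R,N} \ast |f|^d(x_1) = \sum_{j \in \mathbb Z} \int_{R_j} |f(y_1)|^d\, \eta_{1,R,N}(x_1 - y_1)\,dy_1 =: \sum_{j \in \mathbb Z} T_{j,k}(x_1).
\]

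Next, I would bound each $T_{j,k}(x_1)$ using H\"older's inequality in $y_1$ with exponents $\tfrac{1}{d} = \tfrac{1}{p_1} + (\tfrac{1}{d} - \tfrac{1}{p_1})$ (the hypothesis $d < p_1$ ensures validity), producing
\[
T_{j,k}(x_1) \lesssim R\,(1 + R\,\delta_{j,k})^{-N}\, 2^{j(1 - d/p_1)}\, \|f\,\chi_{R_j}\|_{p_1}^{d},
\]
where $\delta_{j,k} \approx 2^{\max(j,k)}$ when $|j-k| \geq 2$ and $\delta_{j,k} \lesssim 2^k$ when $|j-k| \leq 1$. This mirrors the one-ring estimate from Lemma~\ref{key1 copy(2)} (with $m=1$); the second constraint $d < 1/(1/p_1 + \alpha_1^2)$ together with $\alpha_1^1 + 1/s_1 > 0$ will be needed when summing the resulting geometric series near $k \to -\infty$.

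I would then take the $1/d$-th power, integrate in $x_1$ over $R_k$ (an interval of length $\sim 2^k$) to get the $L^{s_1}$ factor, multiply by $2^{k\alpha_1^1}$, and sum in $\ell^{r_1}(\mathbb Z)$. The resulting double sum in $(j,k)$ splits into a \emph{near} part $|j-k|\leq 1$ and a \emph{far} part $|j-k|\geq 2$, the latter carrying an extra decay factor $(1 + R\,2^{\max(j,k)})^{-N/d}$ which, for $N$ large, makes every interchange absolutely convergent. In the near part, I would insert and extract the weight $2^{j\alpha_1^2}$ to rewrite the inner sum as an $\ell^{\theta_1}$-sum in $j$ of $2^{j\alpha_1^2}\|f\,\chi_{R_j}\|_{p_1}$; when $\alpha_1^2 = \alpha_1^1$ this forces $\theta_1 = r_1$ (the diagonal case), while when $\alpha_1^2 > \alpha_1^1$ the surplus factor $2^{(j-k)(\alpha_1^2 - \alpha_1^1)}$ is summable in $j-k$ and H\"older's inequality on $\mathbb Z$ permits the choice $\theta_1 = q_1$. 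The far part is handled analogously, the geometric decay from $\eta_{1,R,N}$ absorbing any positive power of $2^{|j-k|}$.

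The main obstacle I anticipate is the careful bookkeeping of all dyadic factors so that the exponent of $R$ on the right-hand side emerges as exactly $R^{1/p_1 - 1/s_1 + \alpha_1^2 - \alpha_1^1}$, while simultaneously verifying convergence of the $k \to -\infty$ tail. This is precisely where both the condition $\alpha_1^1 + 1/s_1 > 0$ (controlling the $L^{s_1}$-volume factor $2^{k/s_1}$ against $2^{k\alpha_1^1}$) and the range $d < 1/(1/p_1 + \alpha_1^2)$ (controlling the near-origin $y_1$-integral, as in Lemma~\ref{key1 copy(2)}) play an essential role; the dichotomy in the definition of $\theta_1$ reflects exactly the standard threshold between the "resonant" case $\alpha_1^2 = \alpha_1^1$ and the "subcritical" case $\alpha_1^2 > \alpha_1^1$ in Herz-scale embeddings.
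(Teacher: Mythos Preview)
The paper does not prove this lemma directly; it simply cites the one-dimensional Herz-space Plancherel--Polya--Nikolskij inequality from \cite{Drihem1.13}, whose proof contains the estimate in question. So the comparison is whether your direct argument would recover that result.

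Your overall scheme is the right one, and the far part $|j-k|\ge 2$ is fine: there $|x_1-y_1|\gtrsim 2^{\max(j,k)}$, the kernel decay $(1+R\,2^{\max(j,k)})^{-N}$ dominates all dyadic weights for $N$ large, and the discrete Hardy summation closes. The gap is in the near-diagonal part $|j-k|\le 1$. There your $\delta_{j,k}$ gives no help (the distance $|x_1-y_1|$ can vanish), so you are effectively bounding $\eta_{1,R,N}(x_1-y_1)$ by its supremum $R$ and then applying H\"older to $\int_{R_j}|f|^d$. This produces a bound on $T_{j,k}(x_1)$ that is \emph{constant} in $x_1$; after taking $\|{\cdot}\|_{L^{s_1}(R_k)}$ (which then contributes the volume factor $2^{k/s_1}$) and multiplying by $2^{k\alpha_1^1}$ you get
\[
2^{k\alpha_1^1}\big\|T_{k,k}^{1/d}\big\|_{L^{s_1}(R_k)}
\;\lesssim\;
R^{1/d}\,2^{k(\alpha_1^1+1/d-1/p_1+1/s_1)}\,\|f\chi_{R_k}\|_{p_1}.
\]
The ratio of this to the target $R^{1/p_1-1/s_1+\alpha_1^2-\alpha_1^1}\,2^{k\alpha_1^2}\|f\chi_{R_k}\|_{p_1}$ is $(R\,2^{k})^{\gamma}$ with $\gamma=\tfrac{1}{d}-\tfrac{1}{p_1}+\tfrac{1}{s_1}+\alpha_1^1-\alpha_1^2$, and the hypotheses $d<1/(\tfrac{1}{p_1}+\alpha_1^2)$ and $\alpha_1^1+\tfrac{1}{s_1}>0$ force $\gamma>0$. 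Hence your near-diagonal bound is only usable when $2^k\lesssim R^{-1}$; for large $k$ it diverges and the $\ell^{r_1}$-sum cannot close. Your remark that the constraints on $d$ matter only ``near $k\to-\infty$'' misses exactly this large-$k$ failure.

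What is missing on the diagonal is a genuine convolution estimate exploiting $\|\eta_{1,R,N}\|_{L^1}\approx 1$ rather than $\|\eta_{1,R,N}\|_{L^\infty}=R$. Young's inequality gives
\[
\big\|\eta_{1,R,N}\ast(|f|^d\chi_{R_j})\big\|_{L^{s_1/d}}
\;\le\;
\|\eta_{1,R,N}\|_{L^r}\,\|f\chi_{R_j}\|_{p_1}^{\,d},
\qquad \tfrac{1}{r}=1-\tfrac{d}{p_1}+\tfrac{d}{s_1},
\]
with $\|\eta_{1,R,N}\|_{L^r}=C\,R^{1-1/r}=C\,R^{d(1/p_1-1/s_1)}$, which is the correct $R$-exponent. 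One then splits the $k$-sum at the scale $2^k\sim R^{-1}$: the small-$k$ range uses your pointwise bound together with the geometric factor $2^{k\gamma}$, the large-$k$ range uses Young together with $2^{k(\alpha_1^1-\alpha_1^2)}\le R^{\alpha_1^2-\alpha_1^1}$ when $\alpha_1^2>\alpha_1^1$. The $\theta_1$-dichotomy emerges from a final H\"older step in $\ell^{\,\cdot}(\mathbb Z)$, available precisely when the strict inequality $\alpha_1^2>\alpha_1^1$ supplies an extra decaying weight.
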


Let $i \in \{2,\ldots,n-1\}$, $0<\vec{q},\vec{r}\leq \infty$, and $%
(x_{2},\ldots,x_{n}), (y_{2},\ldots,y_{n}) \in \mathbb{R}^{n-1}$. Let $f \in 
\mathfrak{M}(\mathbb{R}^{n})$. We set 
\begin{equation}  \label{def-Vi}
\begin{aligned} V_{i,R,N}(f)(x_{i},\hat{y}_{i+1}) &= \int_{\mathbb{R}}
\big\| \cdots \big\| f\big\|_{\dot{K}_{p_{1}}^{\alpha_{2}^{1},\theta_{1}}}
\cdots \big\|_{\dot{K}_{p_{i-1}}^{\alpha_{2}^{i-1},\theta_{i-1}}}^{d}
(\hat{y}_{i}) \, \eta_{i,R,N}(x_{i}-y_{i})\, dy_{i}. \end{aligned}
\end{equation}

and 
\begin{equation*}
V_{n,R,N}(f)(x_{n}) = \int_{\mathbb{R}} \big\| \cdots \big\| f\big\|_{\dot{K}%
_{p_{1}}^{\alpha_{2}^{1},\theta_{1}}} \cdots \big\|_{\dot{K}%
_{p_{n-1}}^{\alpha_{2}^{n-1},\theta_{n-1}}}^{d} (y_{n}) \,
\eta_{n,R,N}(x_{n}-y_{n})\, dy_{n}.
\end{equation*}

where 
\begin{equation*}
\theta_{i}= 
\begin{cases}
r_{i}, & \text{if } \alpha_{i}^{2}=\alpha_{i}^{1}, \\ 
q_{i}, & \text{if } \alpha_{i}^{2}>\alpha_{i}^{1}.%
\end{cases}%
\end{equation*}

Observe that 
\begin{equation*}
V_{i,R,N}(f)(x_{i},\hat{y}_{i+1}) = \eta_{i,R,N} * \Big( \big\| \cdots %
\big\| f\big\|_{\dot{K}_{p_{1}}^{\alpha_{1}^{2},\theta_{1}}} \cdots \big\|_{%
\dot{K}_{p_{i-1}}^{\alpha_{i-1}^{2},\theta_{i-1}}}^{d} (\hat{y}_{i}) \Big)%
(x_{i}).
\end{equation*}
As in Lemma \ref{key1}, we obtain the following statement.

\begin{lemma}
\label{key1 copy(1)}  Let $i\in \{2,\dots,n\}$, $0<\vec{p},\vec{s},\vec{r},%
\vec{q}\leq \infty$,  $\vec{\alpha}_{1}=(\alpha _{1}^{1},\dots,\alpha
_{n}^{1})\in \mathbb{R}^{n}$,  $\vec{\alpha}_{2}=(\alpha
_{1}^{2},\dots,\alpha _{n}^{2})\in \mathbb{R}^{n}$,  and $f\in \dot{E}_{\vec{%
p}}^{\vec{\alpha},\vec{\theta}}(\mathbb{R}^{n})$.

We suppose that  
\begin{equation*}
\alpha _{i}^{1}+\frac{1}{s_{i}}>0,\qquad 0<p_{i}\leq s_{i}\leq \infty,\qquad
\alpha _{i}^{2}\geq \alpha _{i}^{1}.
\end{equation*}

Then we have 
\begin{equation*}
\big\|(V_{i,R,N}(f)(\cdot ,\hat{y}_{i+1}))^{1/d}\big\|_{\dot{K}
_{s_{i}}^{\alpha _{i}^{1},r_{i}}}\lesssim R^{\frac{1}{p_{i}}-\frac{1}{s_{i}}
+\alpha _{i}^{2}-\alpha _{i}^{1}}\big\|\cdot \cdot \cdot \big\|f\big\|_{\dot{
K}_{p_{1}}^{\alpha _{1}^{2},\theta _{1}}}\cdot \cdot \cdot \big\|_{\dot{K}
_{p_{i}}^{\alpha _{i}^{2},\theta _{i}}}(\hat{y}_{i+1}),
\end{equation*}

for any $N$ large enough and  
$0<d< \min_{1\le i\le n}  \digamma _{i}( p_{i},\alpha _{i}^{2}),$
where $\vec{\theta}=(\theta _{1},\dots,\theta _{n})$ with  
\begin{equation*}
\theta _{i}= 
\begin{cases}
r_{i}, & \text{if } \alpha _{i}^{2}=\alpha _{i}^{1}, \\ 
q_{i}, & \text{if } \alpha _{i}^{2}>\alpha _{i}^{1}.%
\end{cases}%
\end{equation*}
\end{lemma}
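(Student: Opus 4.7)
The plan is to reduce the assertion to the one-dimensional $\dot{K}_p^{\alpha,q}$-Plancherel-Polya-Nikolskij inequality from \cite{Drihem1.13} (the same tool underlying Lemma \ref{key1}), applied in the $i$-th coordinate with the remaining coordinates held as parameters. Setting
\begin{equation*}
G(y_i,\hat{y}_{i+1}) := \big\|\cdots\big\|f\big\|_{\dot{K}_{p_1}^{\alpha_1^2,\theta_1}}\cdots\big\|_{\dot{K}_{p_{i-1}}^{\alpha_{i-1}^2,\theta_{i-1}}}^{d}(\hat{y}_i)
\end{equation*}
and $g := G^{1/d}$, viewed as a function of the single variable $y_i$ with $\hat{y}_{i+1}$ held fixed, the definition \eqref{def-Vi} reads
\begin{equation*}
V_{i,R,N}(f)(x_i,\hat{y}_{i+1}) \;=\; \int_{\mathbb{R}}|g(y_i,\hat{y}_{i+1})|^{d}\,\eta_{i,R,N}(x_i-y_i)\,dy_i,
\end{equation*}
that is, the partial convolution $\eta_{i,R,N}\ast |g|^{d}$ taken only in the $i$-th variable.

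I would then invoke the one-dimensional Plancherel-Polya-Nikolskij inequality in the variable $y_i$. Although Lemma \ref{key1} is formulated for the first coordinate, its proof is genuinely one-dimensional and transfers verbatim to the $i$-th slot; under the stated hypotheses $\alpha_i^1+1/s_i>0$, $0<p_i\le s_i\le\infty$, $\alpha_i^2\ge\alpha_i^1$, and $0<d<\min(p_i,(1/p_i+\alpha_i^2)^{-1})$, it yields
\begin{equation*}
\big\|(V_{i,R,N}(f)(\cdot,\hat{y}_{i+1}))^{1/d}\big\|_{\dot{K}_{s_i}^{\alpha_i^1,r_i}} \;\lesssim\; R^{\frac{1}{p_i}-\frac{1}{s_i}+\alpha_i^2-\alpha_i^1}\,\|g\|_{\dot{K}_{p_i}^{\alpha_i^2,\theta_i}}(\hat{y}_{i+1}),
\end{equation*}
with implicit constant uniform in $R$ and in $\hat{y}_{i+1}$. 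By the very construction of $g$, the Herz norm on the right equals $\big\|\cdots\big\|f\big\|_{\dot{K}_{p_1}^{\alpha_1^2,\theta_1}}\cdots\big\|_{\dot{K}_{p_i}^{\alpha_i^2,\theta_i}}(\hat{y}_{i+1})$, which is exactly the right-hand side of the conclusion; hence the proof would be complete.

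The only point requiring genuine attention, and hence the main obstacle, is the verification that Lemma \ref{key1} transcribes cleanly to the $i$-th coordinate: the parametric dependence on $\hat{y}_{i+1}$ must not produce hidden $R$- or $\hat{y}_{i+1}$-dependent constants, and the dichotomy in the definition of $\theta_i$ must match the one induced by $\alpha_i^2\ge\alpha_i^1$ under this relabeling. Since $\eta_{i,R,N}\ast$ acts solely in the variable $y_i$ and the mixed Herz norm is, by construction, built iteratively from one-dimensional Herz norms in each coordinate, this symmetry is immediate and no further summation, induction on $i$, or maximal-function estimate beyond what Lemma \ref{key1} itself provides is required.
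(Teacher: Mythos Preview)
Your proposal is correct and follows exactly the route the paper intends: the paper itself does not give a separate proof but merely writes ``As in Lemma \ref{key1}, we obtain the following statement,'' i.e., one applies the one-dimensional $\dot{K}_p^{\alpha,q}$-Plancherel--Polya--Nikolskij inequality from \cite{Drihem1.13} in the $i$-th variable to the function $g=\big\|\cdots\big\|f\big\|_{\dot{K}_{p_1}^{\alpha_1^2,\theta_1}}\cdots\big\|_{\dot{K}_{p_{i-1}}^{\alpha_{i-1}^2,\theta_{i-1}}}$, with $\hat{y}_{i+1}$ as frozen parameters. Your observation that the argument is genuinely one-dimensional and that the constants are uniform in $\hat{y}_{i+1}$ is precisely the content of the paper's one-line justification.
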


The following lemma is the $\dot{K}_{\vec{p}}^{\vec{\alpha},\vec{q}}(\mathbb{%
R}^{n})$-version of the Plancherel--P\'olya--Nikol'skij inequality.

\begin{lemma}
\label{Plancherel-Polya-Nikolskij}  Let $0<\vec{p},\vec{s},\vec{r},\vec{q}%
\leq \infty$,  $\vec{\alpha}_{1}=(\alpha _{1}^{1},\ldots,\alpha _{n}^{1})\in 
\mathbb{R}^{n}$,  $\vec{\alpha}_{2}=(\alpha _{1}^{2},\ldots,\alpha
_{n}^{2})\in \mathbb{R}^{n}$,  and let  $f\in \dot{E}_{\vec{p}}^{\vec{\alpha}%
,\vec{\theta}}(\mathbb{R}^{n})\cap \mathcal{S}^{\prime }(\mathbb{R}^{n})$ 
with  
\begin{equation*}
\mathrm{supp}\,\mathcal{F}f \subset \{\xi \in \mathbb{R}^{n} : |\xi|\leq
R\}.  
\end{equation*}
We suppose that  
\begin{equation*}
\alpha _{i}^{1}+\frac{1}{s_{i}}>0,  \qquad 0<p_{i}\leq s_{i}\leq \infty, 
\qquad \alpha _{i}^{2}\geq \alpha _{i}^{1},  \quad i\in\{1,\ldots,n\}.  
\end{equation*}
Then we have  
\begin{equation*}
\|f\|_{\dot{K}_{\vec{s}}^{\vec{\alpha}_{1},\vec{r}}(\mathbb{R}^{n})} 
\lesssim  R^{\frac{1}{\mathbf{p}}-\frac{1}{\mathbf{s}}+\boldsymbol{\alpha}_{2}-%
\boldsymbol{\alpha}_{1}}  \,  \|f\|_{\dot{E}_{\vec{p}}^{\vec{\alpha}_{2},\vec{%
\theta}}(\mathbb{R}^{n})},  
\end{equation*}
where $\vec{\theta}=(\theta_{1},\ldots,\theta_{n})$ is defined by  
\begin{equation*}
\theta_{i}=  
\begin{cases}
r_{i}, & \text{if } \alpha _{i}^{2}=\alpha _{i}^{1}, \\ 
q_{i}, & \text{if } \alpha _{i}^{2}>\alpha _{i}^{1},%
\end{cases}
\qquad i\in\{1,\ldots,n\}.  
\end{equation*}
\end{lemma}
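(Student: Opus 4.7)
My plan is to reduce the estimate to the one-variable Herz Plancherel-Polya-Nikolskij inequalities Lemma \ref{key1} and Lemma \ref{key1 copy(1)} via an initial $r$-trick and then an induction on the number of nested Herz norms. First I will fix $d>0$ small enough that $d<\min_{i}\bigl(p_{i},(\tfrac{1}{p_{i}}+\alpha_{i}^{2})^{-1}\bigr)$ and $d\le\min_{i}(r_{i},\theta_{i})$, and take $N$ large. The spectral support assumption on $f$ together with Lemma \ref{r-trick} and the factorisation $\eta_{R,N}(y)=\prod_{i=1}^{n}\eta_{i,R,N}(y_{i})$ will yield the pointwise envelope
\[
|f(x)|^{d}\lesssim(\eta_{n,R,N}\ast_{n})\cdots(\eta_{1,R,N}\ast_{1})|f|^{d}(x),
\]
where $\ast_{i}$ denotes partial convolution in the $i$-th variable only.

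The heart of the argument will be an induction on $i\in\{1,\dots,n\}$ of the pointwise-in-$(x_{i+1},\dots,x_{n})$ estimate
\[
\bigl\|\cdots\|f\|_{\dot{K}_{s_{1}}^{\alpha_{1}^{1},r_{1}}}\cdots\bigr\|_{\dot{K}_{s_{i}}^{\alpha_{i}^{1},r_{i}}}^{d}(\hat{x}_{i+1})\lesssim R^{d\sum_{j\le i}c_{j}}(\eta_{n,R,N}\ast_{n})\cdots(\eta_{i+1,R,N}\ast_{i+1})[b_{i}](\hat{x}_{i+1}),
\]
with $c_{j}=\tfrac{1}{p_{j}}-\tfrac{1}{s_{j}}+\alpha_{j}^{2}-\alpha_{j}^{1}$ and $b_{i}(\hat{y}_{i+1}):=\bigl\|\cdots\|f\|_{\dot{K}_{p_{1}}^{\alpha_{1}^{2},\theta_{1}}}\cdots\bigr\|_{\dot{K}_{p_{i}}^{\alpha_{i}^{2},\theta_{i}}}^{d}(\hat{y}_{i+1})$. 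The instance $i=n$ is precisely the desired bound raised to the $d$-th power.

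For the base case $i=1$, I will apply Lemma \ref{key1} with the role of $|f|^{d}$ played by the nonnegative function $(\eta_{n,R,N}\ast_{n})\cdots(\eta_{2,R,N}\ast_{2})|f|^{d}$ (the hypotheses of Lemma \ref{key1} are purely functional, so this substitution is legitimate), and then use generalised Minkowski in $L^{p_{1}/d}$ and $\ell^{\theta_{1}/d}$ to commute the $\dot{K}_{p_{1}}^{\alpha_{1}^{2},\theta_{1}}$-norm past the outer convolutions $\ast_{2},\dots,\ast_{n}$. In the inductive step $i\to i+1$, I will take the $\dot{K}_{s_{i+1}}^{\alpha_{i+1}^{1},r_{i+1}}$-norm in $x_{i+1}$ of the $(1/d)$-th root of the inductive right-hand side, commute it past $\ast_{i+2},\dots,\ast_{n}$ by Minkowski in $L^{s_{i+1}/d}$ and $\ell^{r_{i+1}/d}$, and recognise the remaining inner quantity $\eta_{i+1,R,N}\ast_{i+1}b_{i}$ as precisely $V_{i+1,R,N}(f)(x_{i+1},\hat{y}_{i+2})$ from \eqref{def-Vi}. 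Lemma \ref{key1 copy(1)} will then supply the factor $R^{c_{i+1}}$ and upgrade $b_{i}$ to $b_{i+1}$, closing the induction.

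Summing the exponents yields $R^{d\sum_{j}c_{j}}=R^{d(\tfrac{1}{\mathbf{p}}-\tfrac{1}{\mathbf{s}}+\mathbf{\alpha}_{2}-\mathbf{\alpha}_{1})}$, and taking the $d$-th root of the $i=n$ instance gives the claim. The main technical obstacle will be justifying the Minkowski commutations at each level under the allowed range of quasi-norm exponents; this is precisely why $d$ must be chosen small enough to be dominated by all of $p_{i},s_{i},r_{i},\theta_{i}$ simultaneously, which is permissible since $d$ serves only as an internal parameter of the proof.
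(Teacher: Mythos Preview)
Your proposal is correct and follows essentially the same route as the paper. Both arguments start from the $r$-trick pointwise envelope, factor $\eta_{R,N}$ into one-variable kernels $\eta_{i,R,N}$, and then reduce to the one-variable Herz inequalities of Lemma~\ref{key1} and Lemma~\ref{key1 copy(1)}; the only difference is organisational---the paper performs all the Minkowski commutations first to reach the nested expression \eqref{main-term-bernstein} and then applies Lemmas~\ref{key1} and~\ref{key1 copy(1)} from the inside out, whereas you interleave the Minkowski steps with the lemma applications in an induction on the coordinate index, which is an equivalent packaging of the same argument.
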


\begin{proof}
Let $f \in \dot{E}_{\vec{p}}^{\vec{\alpha}_{1},\vec{q}}(\mathbb{R}^{n}) \cap 
\mathcal{S}^{\prime }(\mathbb{R}^{n})$ with $\mathrm{supp}\,\mathcal{F}f
\subset \{\xi \in \mathbb{R}^{n} : |\xi| \leq R\}$. Then, by Lemma \ref%
{r-trick}, we obtain 
\begin{equation*}
|f(x)|^{d} \leq c\, \eta_{R,N} * |f|^{d}(x), \quad x \in \mathbb{R}^{n},
\end{equation*}
where $N$ is chosen sufficiently large. Thus, 
\begin{equation*}
\|f\|_{\dot{K}_{\vec{s}}^{\vec{\alpha}_{1},\vec{r}}(\mathbb{R}^{n})}
\lesssim \big\|\eta_{R,N} * |f|^{d}\big\|_{\dot{K}_{\frac{\vec{s}}{d}}^{d%
\vec{\alpha}_{1},\frac{\vec{r}}{d}}}^{1/d},
\end{equation*}
where the implicit constant is independent of $R$. For convenience of the
reader, we decompose the proof into two steps.

\textit{Step 1. } $n=3$. Let $x,y\in \mathbb{R}^{3}$. We have  
\begin{align*}
\eta_{R,N} * |f|^{d}(x) &= \int_{\mathbb{R}}\int_{\mathbb{R}}\int_{\mathbb{R}%
} |f(y)|^{d} \prod_{i=1}^{3}\eta_{i,R,N}(x_i-y_i)\,dy_1dy_2dy_3 \\
&= \int_{\mathbb{R}}\eta_{3,R,N}(x_3-y_3) \int_{\mathbb{R}%
}\eta_{2,R,N}(x_2-y_2)\, \eta_{1,R,N}*|f|^{d}(x_1,y_2,y_3)\,dy_2dy_3 \\
&= \int_{\mathbb{R}}\eta_{3,R,N}(x_3-y_3)\, \big(\eta_{2,R,N} *
(\eta_{1,R,N} * |f|^{d})\big)(x_1,x_2,y_3)\,dy_3 \\
&= \big(\eta_{3,R,N} * \eta_{2,R,N} * \eta_{1,R,N} * |f|^{d}\big)(x).
\end{align*}

Applying Minkowski's inequality with respect to  $\dot{K}_{\frac{s_i}{d}%
}^{d\alpha_i^{1},\,\frac{r_i}{d}}$, $i\in\{1,2\}$, we obtain  
\begin{align*}
\|f\|_{\dot{K}_{\vec{s}}^{\vec{\alpha}_{1},\vec{r}}(\mathbb{R}^{3})}
&\lesssim \Big\|\eta_{3,R,N} * \Big\|\eta_{2,R,N} * \big\|\eta_{1,R,N} *
|f|^{d}\big\|_{\dot{K}_{\frac{s_{1}}{d}}^{d\alpha_{1}^{1},\,\frac{r_{1}}{d}%
}} \Big\|_{\dot{K}_{\frac{s_{2}}{d}}^{d\alpha_{2}^{1},\,\frac{r_{2}}{d}}} %
\Big\|_{\dot{K}_{\frac{s_{3}}{d}}^{d\alpha_{3}^{1},\,\frac{r_{3}}{d}}}^{1/d}.
\end{align*}

By Lemma \ref{key1} and in view of \eqref{def-Vi}, we obtain  
\begin{equation*}
\big\|\eta_{1,R,N} * |f|^{d}\big\|_{\dot{K}_{\frac{s_{1}}{d}%
}^{d\alpha_{1}^{1},\,\frac{r_{1}}{d}}} \lesssim R^{\left(\frac{1}{p_{1}}-%
\frac{1}{s_{1}}+\alpha_{1}^{2}-\alpha_{1}^{1}\right)d} \|f\|_{\dot{K}%
_{p_{1}}^{\alpha_{1}^{2},\theta_{1}}}^{d},
\end{equation*}

and  
\begin{equation*}
\eta_{2,R,N} * \big\|\eta_{1,R,N} * |f|^{d}\big\|_{\dot{K}_{\frac{s_{1}}{d}%
}^{d\alpha_{1}^{1},\,\frac{r_{1}}{d}}} \lesssim R^{\left(\frac{1}{p_{1}}-%
\frac{1}{s_{1}}+\alpha_{1}^{2}-\alpha_{1}^{1}\right)d} V_{2,R,N}(f).
\end{equation*}

By Lemma \ref{key1 copy(1)} and \eqref{def-Vi}, we get  
\begin{equation*}
\|V_{2,R,N}(f)\|_{\dot{K}_{\frac{s_{2}}{d}}^{d\alpha_{2}^{1},\,\frac{r_{2}}{d%
}}} \lesssim R^{\left(\frac{1}{p_{2}}-\frac{1}{s_{2}}+\alpha_{2}^{2}-%
\alpha_{2}^{1}\right)d}
\big\|  \big\| f\big\|_{\dot{K}%
_{p_{1}}^{\alpha_{1}^{2},\theta_{1}}}  \big\|_{\dot{K}%
_{p_{2}}^{\alpha_{2}^{2},\theta_{2}}}^{d}
\end{equation*}

Consequently,  
\begin{equation*}
\|f\|_{\dot{K}_{\vec{s}}^{\vec{\alpha}_{1},\vec{r}}(\mathbb{R}^{3})}
\lesssim R^{\frac{1}{\mathbf{p}}-\frac{1}{\mathbf{s}}+\mathbf{\alpha}_{2}-%
\mathbf{\alpha}_{1}} \|f\|_{\dot{E}_{\vec{p}}^{\vec{\alpha}_{2},\vec{\theta}}(%
\mathbb{R}^{3})}.
\end{equation*}
\textit{Step 2. } Let $n \in \mathbb{N}$. Let $i \in \{1,\dots,n\}$ and $x,y
\in \mathbb{R}^n$.  By induction on $n$, we obtain  
\begin{equation*}
\eta_{R,N} * |f|^{d}(x) = \eta_{n,R,N} * \cdots * \eta_{2,R,N} *
\eta_{1,R,N} * |f|^{d}(x).
\end{equation*}

By the Minkowski inequality in the mixed Herz-type space $\dot{K}_{\frac{%
s_{i}}{d}}^{d\alpha _{i}^{1},\frac{r_{i}}{d}}$, $i \in \{1,\dots,n\}$, we
obtain  
\begin{equation}  \label{main-term-bernstein}
\big\|\eta_{R,N} * |f|^{d}\big\|_{\dot{K}_{\frac{\vec{s}}{d}}^{d\vec{\alpha}%
_1,\frac{\vec{r}}{d}}(\mathbb{R}^n)} \lesssim \Big\|\eta_{n,R,N} * \cdots * %
\Big\|\eta_{2,R,N} * \Big\|\eta_{1,R,N} * |f|^{d}\Big\|_{\dot{K}_{\frac{s_1}{%
d}}^{d\alpha_1^{1},\frac{r_1}{d}}} \Big\|_{\dot{K}_{\frac{s_2}{d}%
}^{d\alpha_1^{2},\frac{r_2}{d}}} \cdots \Big\|_{\dot{K}_{\frac{s_{n}}{d}%
}^{d\alpha_1^{n},\frac{r_n}{d}}}^{1/d} .
\end{equation}

Again, applying induction on $n$ together with Lemma~\ref{key1 copy(1)}, we
conclude that  \eqref{main-term-bernstein} is bounded by  
\begin{equation*}
c\text{ }R^{\frac{1}{\mathbf{p}}-\frac{1}{\mathbf{s}}+\boldsymbol{\alpha}_{2}- 
\boldsymbol{\alpha}_{1}}\big\|\cdot \cdot \cdot \big\|\big\|f\big\|_{\dot{K}
_{p_{1}}^{\alpha _{1}^{2},\theta_{1}}}\big\|_{\dot{K}_{p_{2}}^{\alpha
_{2}^{2},\theta_{2}}}\cdot \cdot \cdot \big\|_{\dot{K}_{p_{n}}^{\alpha
_{n}^{2},\theta_{n}}},
\end{equation*}

This is the desired estimate. The proof is complete.
\end{proof}

We need the following lemma, which is basically a consequence of Hardy's
inequality in the sequence Lebesgue space $\ell ^{q}.$

\begin{lemma}
\label{lem:lq-inequality}  Let $0<a<1$ and $0<q\leq \infty$. Let  $%
\{\varepsilon_k\}_{k\in \mathbb{Z}}$ be a sequence of positive real numbers
such that  
\begin{equation*}
\big\|\{\varepsilon_k\}_{k\in \mathbb{Z}}\big\|_{\ell^q} = I < \infty.
\end{equation*}
Then the sequences  
\begin{equation*}
\big\{\delta_k\big\}_{k\in \mathbb{Z}}, \quad \delta_k =
\sum_{j=-\infty}^{k} a^{k-j}\varepsilon_j, \qquad \text{and} \qquad \big\{%
\eta_k\big\}_{k\in \mathbb{Z}}, \quad \eta_k = \sum_{j=k}^{\infty}
a^{j-k}\varepsilon_j,
\end{equation*}
belong to $\ell^q$, and  
\begin{equation*}
\big\|\{\delta_k\}_{k\in \mathbb{Z}}\big\|_{\ell^q} + \big\|\{\eta_k\}_{k\in 
\mathbb{Z}}\big\|_{\ell^q} \leq c\, I,
\end{equation*}
where $c>0$ depends only on $a$ and $q$.
\end{lemma}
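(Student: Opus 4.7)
The inequality is a standard discrete Hardy-type estimate. By symmetry it is enough to treat $\{\delta_k\}$; the argument for $\{\eta_k\}$ is obtained by reversing the direction of summation. I would split the proof according to the value of $q$.

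\textbf{The case $1\leq q\leq\infty$.} Observe that if we set $b_j=a^j$ for $j\geq 0$ and $b_j=0$ for $j<0$, then $\delta_k=(b*\varepsilon)_k$ is just the convolution on $\mathbb{Z}$ of the geometric sequence $\{b_j\}$ with $\{\varepsilon_j\}$. Since $0<a<1$, we have
\begin{equation*}
\big\|\{b_j\}\big\|_{\ell^1}=\sum_{j=0}^{\infty}a^{j}=\frac{1}{1-a}<\infty.
\end{equation*}
Young's convolution inequality on $\mathbb{Z}$ then yields
\begin{equation*}
\big\|\{\delta_k\}\big\|_{\ell^q}\leq \big\|\{b_j\}\big\|_{\ell^1}\big\|\{\varepsilon_j\}\big\|_{\ell^q}\leq \frac{1}{1-a}\,I,
\end{equation*}
which is the desired bound (the case $q=\infty$ can also be done directly by $\delta_k\leq I\sum_{j\leq k}a^{k-j}=I/(1-a)$).

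\textbf{The case $0<q<1$.} Here Young fails, but we can exploit the $q$-subadditivity $|\sum x_j|^{q}\leq \sum|x_j|^{q}$. Applying it to $\delta_k$ gives
\begin{equation*}
\delta_k^{q}\leq \sum_{j=-\infty}^{k}a^{(k-j)q}\varepsilon_j^{q}.
\end{equation*}
Summing over $k\in\mathbb{Z}$ and interchanging the order of summation (everything is non-negative, so Fubini--Tonelli applies),
\begin{equation*}
\sum_{k\in\mathbb{Z}}\delta_k^{q}\leq \sum_{j\in\mathbb{Z}}\varepsilon_j^{q}\sum_{k\geq j}a^{(k-j)q}=\frac{1}{1-a^{q}}\sum_{j\in\mathbb{Z}}\varepsilon_j^{q},
\end{equation*}
because $0<a^{q}<1$. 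Taking $q$-th roots produces the estimate $\|\{\delta_k\}\|_{\ell^q}\leq (1-a^{q})^{-1/q}I$.

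The argument for $\{\eta_k\}$ is identical after replacing $a^{k-j}$ by $a^{j-k}$ and summing from $k$ upward. Combining the two estimates yields the claim, with a constant depending only on $a$ and $q$. I do not anticipate a serious obstacle; the only subtlety is remembering to pass to the $q$-subadditive version of the triangle inequality when $q<1$, since applying Young directly there would be invalid.
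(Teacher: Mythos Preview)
Your proof is correct. The paper does not actually give a proof of this lemma; it merely states it and remarks that it ``is basically a consequence of Hardy's inequality in the sequence Lebesgue space $\ell^{q}$.'' Your argument---Young's convolution inequality on $\mathbb{Z}$ for $1\leq q\leq\infty$ and the $q$-subadditivity of $t\mapsto t^{q}$ combined with Fubini--Tonelli for $0<q<1$---is exactly the standard way to justify this Hardy-type estimate, and there are no gaps.
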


We shall also need the following elementary fact.

\begin{lemma}
\label{Lp-estimate}  Let $0<p\leq \infty$ and $f_k \in L_{\mathrm{loc}}^{p}(%
\mathbb{R}^n)$ for $k\in \mathbb{Z}$.  Then, for any $0<\tau \leq \min(1,p)$%
,  
\begin{equation*}
\Big\|\sum_{k=-\infty}^{\infty} f_k \Big\|_p \leq \Big(\sum_{k=0}^{\infty}
\|f_k\|_p^{\tau}\Big)^{1/\tau}.
\end{equation*}
\end{lemma}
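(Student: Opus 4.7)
The plan is to prove the inequality by establishing the $\tau$-subadditivity of the functional $\|\cdot\|_p^\tau$ on $L^p$ under the hypothesis $0<\tau\le\min(1,p)$, and then iterating. (We presume the right-hand side is meant to be indexed over $k\in\mathbb Z$ to match the left-hand side.)

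\emph{Step 1: $\tau$-subadditivity.} The key inequality to establish is
\begin{equation*}
\big\|f+g\big\|_p^{\tau}\le \big\|f\big\|_p^{\tau}+\big\|g\big\|_p^{\tau}
\end{equation*}
for all $f,g\in L^p_{\mathrm{loc}}(\mathbb R^n)$ with $f+g\in L^p$. If $1\le p\le\infty$, I would use the Minkowski inequality $\|f+g\|_p\le\|f\|_p+\|g\|_p$ together with the elementary bound $(a+b)^{\tau}\le a^{\tau}+b^{\tau}$ valid for $a,b\ge 0$ and $0<\tau\le 1$. If $0<p<1$, I would instead start from the well-known $p$-subadditivity $\|f+g\|_p^p\le\|f\|_p^p+\|g\|_p^p$ (which follows pointwise from $|a+b|^p\le |a|^p+|b|^p$), raise both sides to the power $\tau/p\le 1$, and apply the same elementary $(a+b)^{\tau/p}$-inequality.

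\emph{Step 2: Finite sums by induction.} Assuming the sum on the right is finite (otherwise the claim is vacuous), I would fix $M,N\in\mathbb N$ and prove by induction on the number of terms that
\begin{equation*}
\Big\|\sum_{k=-M}^{N}f_k\Big\|_p^{\tau}\le \sum_{k=-M}^{N}\big\|f_k\big\|_p^{\tau},
\end{equation*}
the induction step being a direct application of Step 1 to the pair $\bigl(\sum_{k=-M}^{N-1}f_k,\ f_N\bigr)$.

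\emph{Step 3: Passage to the infinite sum.} With finiteness of $\sum_{k\in\mathbb Z}\|f_k\|_p^{\tau}$ at hand, the partial sums $S_{M,N}=\sum_{k=-M}^{N}f_k$ form a Cauchy sequence in $L^p$ (the quasi-norm being controlled by the tails of a convergent series, via Step 2), hence converge to some $S\in L^p$, which is the value we assign to $\sum_{k\in\mathbb Z}f_k$. Taking $\tau$-th powers and then letting $M,N\to\infty$, lower semicontinuity of the $L^p$ quasi-norm (or Fatou's lemma applied to $|S_{M,N}|^p$) gives $\|S\|_p^{\tau}\le\liminf_{M,N}\|S_{M,N}\|_p^{\tau}\le\sum_{k\in\mathbb Z}\|f_k\|_p^{\tau}$, as claimed.

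The only genuine subtlety is the $p<1$ regime, where one must carefully distinguish between the quasi-norm $\|\cdot\|_p$ and the $p$-subadditive functional $\|\cdot\|_p^p$; everything else is a routine induction plus a standard limiting argument, so I do not anticipate a serious obstacle.
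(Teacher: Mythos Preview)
Your argument is correct and is the standard way to establish this elementary fact. You rightly spotted the index mismatch in the statement (the left-hand sum runs over $k\in\mathbb{Z}$ while the right-hand sum is written over $k\ge 0$); in the paper's applications the sums are over the same index set, so this is a typo.

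The paper itself does not prove this lemma at all: it is simply announced as an ``elementary fact'' and left without proof. Hence there is no alternative approach to compare against; your proof via $\tau$-subadditivity of $\|\cdot\|_p^{\tau}$ (splitting into the cases $p\ge 1$ and $p<1$), induction, and a limiting argument is exactly what is intended.
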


\section{Mixed-norm\ Herz-type Besov and Triebel-Lizorkin spaces}

In this section, we introduce the spaces
$\dot{E}_{\vec{p}}^{\vec{\alpha},\vec{q}}B_{\beta}^{s}$ and
$\dot{E}_{\vec{p}}^{\vec{\alpha},\vec{q}}F_{\beta}^{s}$ and establish their
$\varphi$-transform characterizations.

\subsection{The $\varphi$-transform of
	$\dot{E}_{\vec{p}}^{\vec{\alpha},\vec{q}}B_{\beta}^{s}$ and
	$\dot{E}_{\vec{p}}^{\vec{\alpha},\vec{q}}F_{\beta}^{s}$}

Let $\Phi,\varphi\in\mathcal{S}(\mathbb{R}^{n})$ be such that

\begin{equation}
\operatorname{supp}\mathcal{F}\Phi
\subset \{\xi\in\mathbb{R}^{n}:|\xi|\le 2\},
\qquad
|\mathcal{F}\Phi(\xi)|\ge c>0,
\label{Ass1}
\end{equation}
for all $|\xi|\le \frac{5}{3}$, and
\begin{equation}
\operatorname{supp}\mathcal{F}\varphi
\subset
\left\{\xi\in\mathbb{R}^{n}:\frac12\le |\xi|\le 2\right\},
\qquad
|\mathcal{F}\varphi(\xi)|\ge c>0,
\label{Ass2}
\end{equation}
for all $\frac35\le |\xi|\le \frac53$, where $c$ is a positive constant.
Throughout this section, we set
\[
\widetilde{\varphi}(x)=\overline{\varphi(-x)},
\qquad x\in\mathbb{R}^{n}.
\]

We now introduce the function spaces that will be studied in this section.

\begin{definition}
	\label{B-F-def}
	Let $s\in\mathbb{R}$,
	$0<\vec{p},\vec{q}\le\infty$,
	$\vec{\alpha}=(\alpha_{1},\ldots,\alpha_{n})\in\mathbb{R}^{n}$,
	$0<\beta\le\infty$, and let $\Phi$ and $\varphi$ satisfy
	\eqref{Ass1} and \eqref{Ass2}, respectively. For
	$k\in\mathbb{N}$, define
	$\varphi_{k}=2^{kn}\varphi(2^{k}\cdot).$
	\medskip
	
	\noindent
	\textup{(i)}
	The \emph{mixed-norm Herz-type Besov space}
	$\dot{E}_{\vec{p}}^{\vec{\alpha},\vec{q}}B_{\beta}^{s}$
	is defined as the collection of all
	$f\in\mathcal{S}'(\mathbb{R}^{n})$ such that
	\[
	\|f\|_{\dot{E}_{\vec{p}}^{\vec{\alpha},\vec{q}}B_{\beta}^{s}}
	=
	\left(
	\sum_{k=0}^{\infty}
	2^{ks\beta}
	\|\varphi_{k}*f\|_{\dot{E}_{\vec{p}}^{\vec{\alpha},\vec{q}}
		(\mathbb{R}^{n})}^{\beta}
	\right)^{1/\beta}
	<\infty,
	\]
	where $\varphi_{0}$ is replaced by $\Phi$, with the usual modification
	when $\beta=\infty$.
	
	\medskip
	
	\noindent
	\textup{(ii)}
	Let $0<\vec{p},\vec{q}<\infty$. The
	\emph{mixed-norm Herz-type Triebel--Lizorkin space}
	$\dot{E}_{\vec{p}}^{\vec{\alpha},\vec{q}}F_{\beta}^{s}$
	is defined as the collection of all
	$f\in\mathcal{S}'(\mathbb{R}^{n})$ such that
	\[
	\|f\|_{\dot{E}_{\vec{p}}^{\vec{\alpha},\vec{q}}F_{\beta}^{s}}
	=
	\left\|
	\left(
	\sum_{k=0}^{\infty}
	2^{ks\beta}
	|\varphi_{k}*f|^{\beta}
	\right)^{1/\beta}
	\right\|_{\dot{E}_{\vec{p}}^{\vec{\alpha},\vec{q}}
		(\mathbb{R}^{n})}
	<\infty,
	\]
	where $\varphi_{0}$ is replaced by $\Phi$, with the usual modification
	when $\beta=\infty$.
\end{definition}

Let $s\in\mathbb{R}$, $0<\vec{p}\le\infty$, and
$0<\beta\le\infty$. Using the system
$\{\varphi_{k}\}_{k\in\mathbb{N}_{0}}$, we define the quasi-norms
\[
\|f\|_{B_{\vec{p},\beta}^{s}}
=
\left(
\sum_{k=0}^{\infty}
2^{ks\beta}
\|\varphi_{k}*f\|_{L^{\vec{p}}(\mathbb{R}^{n})}^{\beta}
\right)^{1/\beta}
\]
and
\[
\|f\|_{F_{\vec{p},\beta}^{s}}
=
\left\|
\left(
\sum_{k=0}^{\infty}
2^{ks\beta}
|\varphi_{k}*f|^{\beta}
\right)^{1/\beta}
\right\|_{L^{\vec{p}}(\mathbb{R}^{n})},
\qquad
0<\vec{p}<\infty .
\]

The mixed-norm Besov space $B_{\vec{p},\beta}^{s}$ consists of all
$f\in\mathcal{S}'(\mathbb{R}^{n})$ for which
$\|f\|_{B_{\vec{p},\beta}^{s}}<\infty$.
Similarly, the mixed-norm Triebel--Lizorkin space
$F_{\vec{p},\beta}^{s}$ consists of all
$f\in\mathcal{S}'(\mathbb{R}^{n})$ for which
$\|f\|_{F_{\vec{p},\beta}^{s}}<\infty$.

It is well known that these spaces are independent of the particular
choice of the system
$\{\varphi_{k}\}_{k\in\mathbb{N}_{0}}$, up to equivalence of
quasi-norms. Further details concerning the classical theory of these
spaces, including the homogeneous case, may be found in
\cite{GJN17,JS07,JS08,JHS12}. For the classical Besov and
Triebel--Lizorkin spaces, we refer the reader to
\cite{FJ86,FJ90,FrJaWe01,T83,T2}.

It is immediate from the definitions that, when
$\vec{\alpha}=0$ and $\vec{p}=\vec{q}$,
\[
\dot{E}_{\vec{p}}^{0,\vec{p}}B_{\beta}^{s}
=
B_{\vec{p},\beta}^{s},
\qquad
\dot{E}_{\vec{p}}^{0,\vec{p}}F_{\beta}^{s}
=
F_{\vec{p},\beta}^{s}.
\]

Let $\Phi$ and $\varphi$ satisfy \eqref{Ass1} and \eqref{Ass2},
respectively. By \cite[Section 12]{FJ90}, there exist functions
$\Psi\in\mathcal{S}(\mathbb{R}^{n})$ satisfying \eqref{Ass1} and
$\psi\in\mathcal{S}(\mathbb{R}^{n})$ satisfying \eqref{Ass2} such that
\begin{equation}
\mathcal{F}\widetilde{\Phi}(\xi)\mathcal{F}\Psi(\xi)
+
\sum_{k=1}^{\infty}
\mathcal{F}\widetilde{\varphi}(2^{-k}\xi)
\mathcal{F}\psi(2^{-k}\xi)
=
1,
\qquad
\xi\in\mathbb{R}^{n}.
\label{Ass3}
\end{equation}

A fundamental tool in the study of the above spaces is the following
Calder\'{o}n reproducing formula; see \cite[(12.4)]{FJ90} and
\cite[Lemma 2.3]{YSY10}.

\begin{lemma}
	\label{DW-lemma1}
	Let $\Phi,\Psi\in\mathcal{S}(\mathbb{R}^{n})$ satisfy \eqref{Ass1},
	and let $\varphi,\psi\in\mathcal{S}(\mathbb{R}^{n})$ satisfy
	\eqref{Ass2}. Assume that \eqref{Ass3} holds. Then, for every
	$f\in\mathcal{S}'(\mathbb{R}^{n})$,
	\begin{align}
	f
	&=
	\widetilde{\Phi}*\Psi*f
	+\sum_{k=1}^{\infty}
	\widetilde{\varphi}_{k}*\psi_{k}*f
	\notag\\
	&=
	\sum_{m\in\mathbb{Z}^{n}}
	\widetilde{\Phi}*f(m)\,\Psi_{m}
	+
	\sum_{k=1}^{\infty}
	2^{-kn/2}
	\sum_{m\in\mathbb{Z}^{n}}
	\widetilde{\varphi}_{k}*f(2^{-k}m)\,
	\psi_{k,m},
	\label{proc2}
	\end{align}
	with convergence in $\mathcal{S}'(\mathbb{R}^{n})$, where
	\[
	\Psi_{m}=\Psi(\cdot-m),
	\qquad
	\psi_{k,m}
	=
	2^{kn/2}\psi(2^{k}\cdot-m),
	\qquad
	m\in\mathbb{Z}^{n},\ k\in\mathbb{N}.
	\]
\end{lemma}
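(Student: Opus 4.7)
The plan is to establish the identity in two stages. The first equality of \eqref{proc2} (the continuous Calder\'on reproducing formula) comes directly from the spectral identity \eqref{Ass3} via the Fourier transform. The second equality (the discrete version) is then obtained by sampling each band-limited piece $\tilde{\varphi}_k\ast\psi_k\ast f$ on the lattice $2^{-k}\mathbb{Z}^n$ using the Poisson summation formula, and similarly sampling $\tilde{\Phi}\ast\Psi\ast f$ on $\mathbb{Z}^n$.

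For the continuous version, I would apply the Fourier transform to the asserted identity. Since $\mathrm{supp}\,\mathcal{F}\tilde{\varphi}(2^{-k}\cdot)\subset\{\xi:2^{k-1}\leq|\xi|\leq 2^{k+1}\}$, at each fixed $\xi$ only finitely many terms in the sum in \eqref{Ass3} are non-zero, so the pointwise identity makes sense without any convergence issue. Multiplying by $\mathcal{F}f(\xi)$ in $\mathcal{S}'(\mathbb{R}^n)$ and applying the inverse Fourier transform yields
\[
f=\tilde{\Phi}\ast\Psi\ast f+\sum_{k=1}^{\infty}\tilde{\varphi}_k\ast\psi_k\ast f
\]
in $\mathcal{S}'(\mathbb{R}^n)$; convergence of the partial sums when tested against any $\zeta\in\mathcal{S}(\mathbb{R}^n)$ follows from the dyadic-annular support localization together with the polynomial growth of $\mathcal{F}f$ and the rapid decay of $\mathcal{F}\zeta$.

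For the discrete version, the key observation is that, for fixed $x$, the function $y\mapsto(\tilde{\varphi}_k\ast f)(y)\,\psi_k(x-y)$ is band-limited at scale $2^k$, because both factors have Fourier support in $\{|\eta|\lesssim 2^{k+1}\}$ and the Fourier transform of the product is the convolution, hence supported in $\{|\eta|\lesssim 2^{k+2}\}$. Poisson summation on the lattice $2^{-k}\mathbb{Z}^n$ then replaces the integral by an exact Riemann sum:
\[
\tilde{\varphi}_k\ast\psi_k\ast f(x)=\int(\tilde{\varphi}_k\ast f)(y)\,\psi_k(x-y)\,dy=2^{-kn}\sum_{m\in\mathbb{Z}^n}(\tilde{\varphi}_k\ast f)(2^{-k}m)\,\psi_k(x-2^{-k}m).
\]
Since $\psi_k(x-2^{-k}m)=2^{kn/2}\psi_{k,m}(x)$, this gives exactly the $k\geq 1$ part of the claimed expansion. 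The same argument, now with lattice spacing $1$ and using the band-limited function $y\mapsto(\tilde{\Phi}\ast f)(y)\,\Psi(x-y)$, handles the $\tilde{\Phi}\ast\Psi\ast f$ term and produces the $\Psi_m$-piece.

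The main obstacle is the rigorous justification of Poisson summation when $f\in\mathcal{S}'(\mathbb{R}^n)$ is only a tempered distribution: the factor $\tilde{\varphi}_k\ast f$ is a smooth function of at most polynomial growth, not Schwartz, so the naive statement of Poisson summation does not apply term by term. I would circumvent this by pairing both sides of the claimed identity against an arbitrary $\zeta\in\mathcal{S}(\mathbb{R}^n)$, which moves the polynomial-growth factor onto a Schwartz factor and reduces each piece to an identity among functions to which classical Poisson summation applies directly. The convergence of the resulting double sum in $\mathcal{S}'(\mathbb{R}^n)$ is then controlled by the same annular-support and polynomial-growth bookkeeping used in the continuous step, completing the proof.
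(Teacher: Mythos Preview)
The paper does not actually prove this lemma; it states the result and cites \cite[(12.4)]{FJ90} and \cite[Lemma~2.3]{YSY10}. Your outline is precisely the standard Frazier--Jawerth argument found in those references: the continuous identity comes from \eqref{Ass3} via Fourier inversion, and the discrete version by sampling each band-limited piece through Poisson summation, with the distributional justification handled by pairing against a test function.

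One point worth making explicit when you write it up is the numerical check that the lattice is fine enough. The product $y\mapsto(\tilde{\varphi}_k\ast f)(y)\,\psi_k(x-y)$ has Fourier support in $\{|\eta|\le 2^{k+2}\}$, and for Poisson summation on $2^{-k}\mathbb{Z}^n$ to collapse to the single term at the origin one needs $2^{k+2}<2\pi\cdot 2^k$, which holds since $4<2\pi$; the analogous check for the $\Phi,\Psi$ term on $\mathbb{Z}^n$ is the same inequality. This is of course exactly why the support windows in \eqref{Ass1}--\eqref{Ass2} are chosen as they are, but it is the crux of the sampling step and should be stated.
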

Let $\Phi,\Psi,\varphi,\psi\in\mathcal{S}(\mathbb{R}^{n})$ satisfy
\eqref{Ass1}--\eqref{Ass3}. The $\varphi$-transform
$S_{\varphi}$ is defined by
\[
(S_{\varphi}f)_{0,m}
=
\langle f,\Phi_{m}\rangle,
\qquad
(S_{\varphi}f)_{k,m}
=
\langle f,\varphi_{k,m}\rangle,
\]
where
\[
\Phi_{m}=\Phi(\cdot-m),
\qquad
\varphi_{k,m}
=
2^{kn/2}\varphi(2^{k}\cdot-m),
\quad
m\in\mathbb{Z}^{n},
\ k\in\mathbb{N}.
\]

The inverse $\varphi$-transform $T_{\psi}$ is defined by
\[
T_{\psi}\lambda
=
\sum_{m\in\mathbb{Z}^{n}}
\lambda_{0,m}\Psi_{m}
+
\sum_{k=1}^{\infty}
\sum_{m\in\mathbb{Z}^{n}}
\lambda_{k,m}\psi_{k,m},
\]
where
$\lambda=\{\lambda_{k,m}\}_{k\in\mathbb{N}_{0},\,m\in\mathbb{Z}^{n}}
\subset\mathbb{C}$; see \cite[p.~131]{FJ90}.

We now introduce the sequence spaces associated with
$\dot{E}_{\vec{p}}^{\vec{\alpha},\vec{q}}B_{\beta}^{s}$ and
$\dot{E}_{\vec{p}}^{\vec{\alpha},\vec{q}}F_{\beta}^{s}$.

\begin{definition}
	\label{sequence-space}
	Let
	$s\in\mathbb{R}$,
	$0<\vec{p},\vec{q}\le\infty$,
	$\vec{\alpha}=(\alpha_{1},\ldots,\alpha_{n})\in\mathbb{R}^{n}$,
	and $0<\beta\le\infty$.
	
	\medskip
	
	\noindent
	\textup{(i)}
	The space
	$\dot{E}_{\vec{p}}^{\vec{\alpha},\vec{q}}b_{\beta}^{s}$
	is defined as the collection of all complex-valued sequences
	$\lambda=\{\lambda_{k,m}\}_{k\in\mathbb{N}_{0},\,m\in\mathbb{Z}^{n}}$
	such that
	\[
	\|\lambda\|_{\dot{E}_{\vec{p}}^{\vec{\alpha},\vec{q}}b_{\beta}^{s}}
	=
	\left(
	\sum_{v=0}^{\infty}
	2^{v(s+\frac{n}{2})\beta}
	\left\|
	\sum_{m\in\mathbb{Z}^{n}}
	\lambda_{v,m}\chi_{v,m}
	\right\|_{\dot{E}_{\vec{p}}^{\vec{\alpha},\vec{q}}(\mathbb{R}^{n})}^{\beta}
	\right)^{1/\beta}
	<\infty.
	\]
	
	\medskip
	
	\noindent
	\textup{(ii)}
	Let $0<\vec{p},\vec{q}<\infty$.
	The space
	$\dot{E}_{\vec{p}}^{\vec{\alpha},\vec{q}}f_{\beta}^{s}$
	is defined as the collection of all complex-valued sequences
	$\lambda=\{\lambda_{k,m}\}_{k\in\mathbb{N}_{0},\,m\in\mathbb{Z}^{n}}$
	such that
	\[
	\|\lambda\|_{\dot{E}_{\vec{p}}^{\vec{\alpha},\vec{q}}f_{\beta}^{s}}
	=
	\left\|
	\left(
	\sum_{v=0}^{\infty}
	\sum_{m\in\mathbb{Z}^{n}}
	2^{v(s+\frac{n}{2})\beta}
	|\lambda_{v,m}|^{\beta}
	\chi_{v,m}
	\right)^{1/\beta}
	\right\|_{\dot{E}_{\vec{p}}^{\vec{\alpha},\vec{q}}(\mathbb{R}^{n})}
	<\infty.
	\]
\end{definition}

For brevity, we shall use
$\dot{E}_{\vec{p}}^{\vec{\alpha},\vec{q}}A_{\beta}^{s}$
to denote either
$\dot{E}_{\vec{p}}^{\vec{\alpha},\vec{q}}B_{\beta}^{s}$
or
$\dot{E}_{\vec{p}}^{\vec{\alpha},\vec{q}}F_{\beta}^{s}$.
When
$\dot{E}_{\vec{p}}^{\vec{\alpha},\vec{q}}A_{\beta}^{s}
=
\dot{E}_{\vec{p}}^{\vec{\alpha},\vec{q}}F_{\beta}^{s}$,
the cases $\vec{p}=\infty$ and/or $\vec{q}=\infty$ are excluded.
Similarly, we use the notation
$\dot{E}_{\vec{p}}^{\vec{\alpha},\vec{q}}a_{\beta}^{s}$
to denote either
$\dot{E}_{\vec{p}}^{\vec{\alpha},\vec{q}}b_{\beta}^{s}$
or
$\dot{E}_{\vec{p}}^{\vec{\alpha},\vec{q}}f_{\beta}^{s}$.

Observe that the spaces
$\dot{E}_{\vec{p}}^{\vec{\alpha},\vec{q}}A_{\beta}^{s}$
are quasi-normed. If
\[
d=\min\{1,p_{-},q_{-},\beta\},
\]
then
\[
\|f+g\|_{\dot{E}_{\vec{p}}^{\vec{\alpha},\vec{q}}A_{\beta}^{s}}
\lesssim
\|f\|_{\dot{E}_{\vec{p}}^{\vec{\alpha},\vec{q}}A_{\beta}^{s}}
+
\|g\|_{\dot{E}_{\vec{p}}^{\vec{\alpha},\vec{q}}A_{\beta}^{s}}
\]
for all
$f,g\in
\dot{E}_{\vec{p}}^{\vec{\alpha},\vec{q}}A_{\beta}^{s}$.

The following lemma shows that the inverse $\varphi$-transform is
well defined on
$\dot{E}_{\vec{p}}^{\vec{\alpha},\vec{q}}a_{\beta}^{s}$.

\begin{lemma}
	\label{Inv-phi-trans}
	Let
	$s\in\mathbb{R}$,
	$0<\vec{p},\vec{q}\le\infty$,
	$\vec{\alpha}=(\alpha_{1},\ldots,\alpha_{n})\in\mathbb{R}^{n}$,
	$0<\beta\le\infty$,
	and assume that
	\[
	\alpha_{i}>-\frac1{p_{i}},
	\qquad
	i\in \{1,\dots,n\}.
	\]
	Let $\Psi$ and $\psi$ satisfy
	\eqref{Ass1} and \eqref{Ass2}, respectively.
	Then, for every
	$\lambda\in
	\dot{E}_{\vec{p}}^{\vec{\alpha},\vec{q}}a_{\beta}^{s}$,
	the series
	\[
	T_{\psi}\lambda
	=
	\sum_{m\in\mathbb{Z}^{n}}
	\lambda_{0,m}\Psi_{m}
	+
	\sum_{k=1}^{\infty}
	\sum_{m\in\mathbb{Z}^{n}}
	\lambda_{k,m}\psi_{k,m}
	\]
	converges in
	$\mathcal{S}'(\mathbb{R}^{n})$.
	Moreover,
	\[
	T_{\psi}:
	\dot{E}_{\vec{p}}^{\vec{\alpha},\vec{q}}a_{\beta}^{s}
	\longrightarrow
	\mathcal{S}'(\mathbb{R}^{n})
	\]
	is continuous.
\end{lemma}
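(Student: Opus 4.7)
The plan is to establish both claims via a single estimate: for every $\phi \in \mathcal{S}(\mathbb{R}^n)$ there exist $M \in \mathbb{N}$ and $C > 0$, depending only on the parameters $s, \vec{p}, \vec{q}, \vec{\alpha}, \beta$, such that
\[
\sum_{m \in \mathbb{Z}^n}|\lambda_{0,m}|\,|\langle \Psi_m, \phi\rangle| + \sum_{k=1}^\infty \sum_{m \in \mathbb{Z}^n}|\lambda_{k,m}|\,|\langle \psi_{k,m}, \phi\rangle| \leq C\,\|\phi\|_{\mathcal{S}_M}\,\|\lambda\|_{\dot{E}_{\vec{p}}^{\vec{\alpha},\vec{q}}a_\beta^s}.
\]
This yields simultaneously the absolute convergence of $\langle T_\psi \lambda, \phi\rangle$ for every Schwartz $\phi$ (hence $\mathcal{S}'$-convergence of the defining series) and the continuity of $T_\psi : \dot{E}_{\vec{p}}^{\vec{\alpha},\vec{q}}a_\beta^s \to \mathcal{S}'(\mathbb{R}^n)$.

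To control the inner products, I would exploit the fact that $\mathcal{F}\psi$ vanishes near the origin by \eqref{Ass2}, so that $\psi$ has vanishing moments of all orders. A standard Taylor-expansion argument (cf.\ \cite{FJ90,YSY10}) then gives, for arbitrary $L, N \in \mathbb{N}$,
\[
|\langle \psi_{k,m}, \phi\rangle| \lesssim 2^{-kn/2}\,2^{-kL}(1+|2^{-k}m|)^{-N}, \quad k \geq 1,
\]
together with $|\langle \Psi_m, \phi\rangle| \lesssim (1+|m|)^{-N}$ for the $k=0$ term, the implied constants depending on $\phi$ through a Schwartz seminorm of order $M = M(L,N)$. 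Since on each cube $Q_{k,m}$ one has $1+|2^{-k}m| \approx 1+|x|$ and $|Q_{k,m}| = 2^{-kn}$, a discrete-to-integral comparison yields
\[
\sum_m |\lambda_{k,m}|(1+|2^{-k}m|)^{-N} \lesssim 2^{kn}\int_{\mathbb{R}^n} g_k(x)(1+|x|)^{-N}\,dx, \qquad g_k := \sum_m |\lambda_{k,m}|\chi_{k,m}.
\]
Decomposing $\mathbb{R}^n$ according to the annuli $C_j$, $j \in \mathbb{Z}$, and applying H\"older's inequality in the mixed-norm Herz space on each annulus gives
\[
\int_{\mathbb{R}^n} g_k(x)(1+|x|)^{-N}\,dx \lesssim \|g_k\|_{\dot{E}_{\vec{p}}^{\vec{\alpha},\vec{q}}(\mathbb{R}^n)}
\]
provided $N = N(\vec{p}, \vec{q}, \vec{\alpha})$ is taken sufficiently large. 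The hypothesis $\alpha_i > -1/p_i$ is precisely what guarantees that the characteristic functions of bounded sets are integrable against elements of $\dot{E}_{\vec{p}}^{\vec{\alpha},\vec{q}}(\mathbb{R}^n)$ with summable bounds in $j$, serving as the natural Herz-space counterpart to the classical condition $N > n/p$ ensuring $(1+|\cdot|)^{-N} \in L^{p'}$.

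Combining these estimates, the contribution of scale $k$ to the total sum is bounded by $2^{k(n/2 - L)}\|g_k\|_{\dot{E}_{\vec{p}}^{\vec{\alpha},\vec{q}}(\mathbb{R}^n)}$. In the Besov scale, $(2^{k(s+n/2)}\|g_k\|_{\dot{E}_{\vec{p}}^{\vec{\alpha},\vec{q}}})_{k \geq 0} \in \ell^\beta$ by the very definition of $\|\lambda\|_{\dot{E}_{\vec{p}}^{\vec{\alpha},\vec{q}}b_\beta^s}$; choosing $L$ large enough relative to $s$, $n$ and $\beta$ and applying H\"older in $\ell^\beta$ closes the sum. In the Triebel-Lizorkin scale one first uses the elementary embedding $\dot{E}_{\vec{p}}^{\vec{\alpha},\vec{q}}f_\beta^s \hookrightarrow \dot{E}_{\vec{p}}^{\vec{\alpha},\vec{q}}b_\infty^{s'}$ for any $s' < s$ to reduce to the Besov case. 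The main obstacle will be the H\"older step in the mixed-norm Herz space: because the norm is iterated across $n$ coordinate directions, each carrying its own triple $(\alpha_i, p_i, q_i)$, one has to track the pairing direction by direction and verify that the hypothesis $\alpha_i > -1/p_i$ provides the required summability of the annular pieces in every direction at once.
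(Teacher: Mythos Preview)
Your overall plan---reduce everything to a single absolute bound on $\sum_{k,m}|\lambda_{k,m}|\,|\langle\psi_{k,m},\phi\rangle|$ and control the pairings via the moment conditions---is exactly what the paper does. The gap is in the step
\[
\int_{\mathbb{R}^n} g_k(x)\,(1+|x|)^{-N}\,dx \;\lesssim\; \|g_k\|_{\dot{E}_{\vec{p}}^{\vec{\alpha},\vec{q}}(\mathbb{R}^n)}
\]
with a constant independent of $k$. You justify it by saying that $\alpha_i>-1/p_i$ makes characteristic functions of bounded sets ``integrable against'' elements of $\dot{E}_{\vec{p}}^{\vec{\alpha},\vec{q}}$. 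That is the wrong direction: the condition $\alpha_i>-1/p_i$ puts $\chi_B$ \emph{into} $\dot{E}_{\vec{p}}^{\vec{\alpha},\vec{q}}$, not into its associate space. For the dual pairing one would need $\chi_B\in \dot{E}_{\vec{p}'}^{-\vec{\alpha},\vec{q}'}$, which near the origin forces $\alpha_i<1-1/p_i$, and this is \emph{not} part of the hypothesis. A one-dimensional check shows the inequality fails: with $p_1=q_1=1$, $\alpha_1=5$ and $g_k=\chi_{[0,2^{-k})}$ (that is, $\lambda_{k,0}=1$ and all other entries zero) one has $\int g_k(1+|x|)^{-N}dx\approx 2^{-k}$ for every $N$, while $\|g_k\|_{\dot K_1^{5,1}}\approx\sum_{j\le -k}2^{6j}\approx 2^{-6k}$, so the ratio blows up like $2^{5k}$.

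The paper sidesteps this by never invoking a dual pairing. Instead it bounds each coefficient individually: applying H\"older in the first variable with an auxiliary exponent $r_1<\min\big(p_1,q_1,\tfrac{1}{\alpha_1+1/p_1}\big)$ and then iterating over the remaining coordinates yields
\[
|\lambda_{k,m}|\;\lesssim\;2^{k/\mathbf{r}}\prod_{i=1}^{n}\big(1+2^{-k}|m_i|\big)^{\frac{1}{t_i}-\alpha_i}\,\big\|\lambda\big\|_{\dot{E}_{\vec{p}}^{\vec{\alpha},\vec{q}}a_\beta^s},
\qquad \tfrac{1}{t_i}=\tfrac{1}{r_i}-\tfrac{1}{p_i}>\alpha_i.
\]
Here $\alpha_i>-1/p_i$ is exactly what allows the choice of $r_i$. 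The polynomial growth in $m$ and the factor $2^{k/\mathbf{r}}$ are then harmless, because they are absorbed by the freely chosen decay $2^{-kL}(1+|2^{-k}m|)^{-n-L}$ in the estimate for $\langle\psi_{k,m},\phi\rangle$. Your integral approach could in principle be repaired by tracking an explicit $k$-dependent constant in the H\"older step and absorbing it into $L$, but as written the argument does not close.
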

\begin{proof}
	Since the argument for
	$\dot{E}_{\vec{p}}^{\vec{\alpha},\vec{q}}b_{\beta}^{s}$
	is similar, we only consider the space
	$\dot{E}_{\vec{p}}^{\vec{\alpha},\vec{q}}f_{\beta}^{s}$.
	
	Choose
	\[
	0<r_i<
	\min\left\{
	\frac{1}{\alpha_i+\frac1{p_i}},
	\,p_i,
	\,q_i
	\right\},
	\qquad i\in \{1,\dots,n\},
	\]
	and let
	\[
	\frac1{\vec r}
	=
	\frac1{\vec p}
	+
	\frac1{\vec t},
	\qquad
	\frac1{\vec r}
	=
	\frac1{\vec q}
	+
	\frac1{\vec v},
	\]
	where
	$\vec r,\vec t,\vec v\in(0,\infty)^n$.
	Let
	$\lambda\in
	\dot{E}_{\vec p}^{\vec\alpha,\vec q}f_\beta^s$
	and
	$\varphi\in\mathcal S(\mathbb R^n)$.
	Set
	\[
	I_1
	=
	\sum_{m\in\mathbb Z^n}
	|\lambda_{0,m}|
	\,|\langle \Psi_m,\varphi\rangle|,
	\qquad
	I_2
	=
	\sum_{k=1}^{\infty}
	\sum_{m\in\mathbb Z^n}
	|\lambda_{k,m}|
	\,|\langle \psi_{k,m},\varphi\rangle|.
	\]
	
	It suffices to show that there exist
	$M\in\mathbb N$ and a constant $c>0$ such that
	\[
	I_1+I_2
	\le
	c\,\|\varphi\|_{\mathcal S_M}
	\|\lambda\|_{\dot E_{\vec p}^{\vec\alpha,\vec q}f_\beta^s}.
	\]
	
	\medskip
	
	\noindent
	\textit{Estimate of $I_1$.}
	Let
	$m\in\mathbb Z^n$ and choose
	$L,M\in\mathbb N$ with
	$M>L+n$.
	Since
	$\varphi,\Psi\in\mathcal S(\mathbb R^n)$,
	we obtain
	\begin{align*}
	|\langle \Psi_m,\varphi\rangle|
	&\le
	\int_{\mathbb R^n}
	|\Psi(x-m)|\,|\varphi(x)|\,dx
	\\
	&\le
	\|\varphi\|_{\mathcal S_M}
	\|\Psi\|_{\mathcal S_L}
	\int_{\mathbb R^n}
	(1+|x-m|)^{-L-n}
	(1+|x|)^{-M-n}\,dx
	\\
	&\lesssim
	\|\varphi\|_{\mathcal S_M}
	\|\Psi\|_{\mathcal S_L}
	(1+|m|)^{-L-n}.
	\end{align*}
	
	The last estimate follows from
	\[
	(1+|x-m|)^{-L-n}
	\le
	(1+|m|)^{-L-n}(1+|x|)^{L+n},
	\qquad
	x\in\mathbb R^n,\ m\in\mathbb Z^n.
	\]
	
We claim that
\begin{equation}
|\gamma|
\lesssim
\prod_{i=1}^{n}(1+|m_i|)^{\frac1{t_i}-\alpha_i}
\,
\Big\|
\cdots
\big\|
\big\|
\gamma \chi_{0,m_1}
\big\|_{\dot K_{p_1}^{\alpha_1,q_1}}
\chi_{0,m_2}
\Big\|_{\dot K_{p_2}^{\alpha_2,q_2}}
\cdots
\chi_{0,m_n}
\Big\|_{\dot K_{p_n}^{\alpha_n,q_n}},
\label{main-est}
\end{equation}
for every $\gamma\in\mathbb C$ such that the right-hand side of
\eqref{main-est} is finite. Taking $\gamma=\lambda_{0,m}$ in
\eqref{main-est}, we obtain
\begin{equation*}
|\lambda_{0,m}|
\lesssim
\prod_{i=1}^{n}(1+|m_i|)^{\frac1{t_i}-\alpha_i}
\,
\|\lambda\|_{\dot E_{\vec p}^{\vec\alpha,\vec q}f_\beta^s}.
\end{equation*}

We prove \eqref{main-est} by induction on $n$.
First, let $n=1$. By H\"{o}lder's inequality,
\begin{align}
|\gamma|^{r_1}
&=
\frac1{|Q_{0,m_1}|}
\sum_{j_1=-\infty}^{\infty}
\|\gamma\chi_{0,m_1}\chi_{R_{j_1}}\|_{L^{{r_{1}}}(\mathbb R)}^{\,r_1}
\notag\\
&\le
c
\sum_{j_1=-\infty}^{\infty}
\|\gamma\chi_{0,m_1}\chi_{R_{j_1}}\|_{L^{{p_{1}}}(\mathbb R)}^{\,r_1}
\,
\|\chi_{0,m_1}\chi_{R_{j_1}}\|_{L^{{t_{1}}}(\mathbb R)}^{\,r_1}
\notag\\
&\le
c
\sum_{j_1=-\infty}^{\infty}
\|\gamma\chi_{0,m_1}\chi_{R_{j_1}}\|_{L^{{p_{1}}}(\mathbb R)}^{\,r_1}
\,
\|\chi_{R_{j_1}}\|_{L^{{t_{1}}}(\mathbb R)}^{\,r_1},
\label{sum1}
\end{align}
where the positive constant $c$ is independent of $m_1\in\mathbb Z$ and $\gamma$.

Since
\[
\chi_{0,m_1}\chi_{R_{j_1}}=0
\quad\text{whenever}\quad
2^{j_1-1}>1+|m_1|,
\]
the sum in \eqref{sum1} reduces to
\begin{equation}
\sum_{\substack{j_1\in\mathbb Z\\ 2^{j_1-1}\le 1+|m_1|}}
\|\gamma\chi_{0,m_1}\chi_{R_{j_1}}\|_{L^{{p_{1}}}(\mathbb R)}^{\,r_1}
\,
\|\chi_{R_{j_1}}\|_{L^{{t_{1}}}(\mathbb R)}^{\,r_1}.
\label{sum2}
\end{equation}

Applying H\"{o}lder's inequality in sequence spaces, we obtain
\begin{align}
\eqref{sum2}
&\le
\Bigg(
\sum_{\substack{j_1\in\mathbb Z\\ 2^{j_1-1}\le 1+|m_1|}}
2^{j_1\alpha_1 q_1}
\|\gamma\chi_{0,m_1}\chi_{R_{j_1}}\|_{L^{{p_{1}}}(\mathbb R)}^{\,q_1}
\Bigg)^{r_1/q_1}
\notag\\
&\qquad\times
\Bigg(
\sum_{\substack{j_1\in\mathbb Z\\ 2^{j_1-1}\le 1+|m_1|}}
2^{-j_1\alpha_1 v_1}
\|\chi_{R_{j_1}}\|_{L^{{t_{1}}}(\mathbb R)}^{\,v_1}
\Bigg)^{r_1/v_1}
\notag\\
&\le
c
\|\gamma\chi_{0,m_1}\|_{\dot K_{p_1}^{\alpha_1,q_1}}^{\,r_1}
\Bigg(
\sum_{\substack{j_1\in\mathbb Z\\ 2^{j_1-1}\le 1+|m_1|}}
2^{j_1(\frac1{t_1}-\alpha_1)v_1}
\Bigg)^{r_1/v_1}
\notag\\
&\le
c
(1+|m_1|)^{(\frac1{t_1}-\alpha_1)r_1}
\,
\|\gamma\chi_{0,m_1}\|_{\dot K_{p_1}^{\alpha_1,q_1}}^{\,r_1},
\label{sum3}
\end{align}
since $\frac1{t_1}-\alpha_1>0$.

Combining \eqref{sum1} and \eqref{sum3}, we conclude that
\[
|\gamma|
\lesssim
(1+|m_1|)^{\frac1{t_1}-\alpha_1}
\,
\|\gamma\chi_{0,m_1}\|_{\dot K_{p_1}^{\alpha_1,q_1}}.
\]

This proves \eqref{main-est} for $n=1$. The general case follows by induction on $n$.

Finally, choosing
\[
L>\boldsymbol{\alpha}-\frac1{\mathbf t}+n,
\]
we obtain
\[
I_1
\lesssim
\|\varphi\|_{\mathcal S_M}
\,
\|\lambda\|_{\dot E_{\vec p}^{\vec\alpha,\vec q}f_\beta^s}.
\]
	
	\medskip
	
	\noindent
	\textit{Estimate of $I_2$.}
	We recall the following estimate (see \cite[Lemma 2.4]{YSY10}):
	since $\psi$ has vanishing moments of arbitrary order, for every
	$L,M>0$ there exists a constant
	$C=C(M,n)>0$ such that
	\begin{equation}
	|\psi_v*\varphi(x)|
	\le
	C\,2^{-vL}
	\|\psi\|_{\mathcal S_{M+1}}
	\|\varphi\|_{\mathcal S_{M+1}}
	(1+|x|)^{-n-L},
	\label{convolution}
	\end{equation}
	for all $v\in\mathbb N$ and
	$x\in\mathbb R^n$.
	
	Let
	$\breve\varphi=\varphi(-\cdot)$.
	By \eqref{convolution},
	\begin{align*}
	|\langle\psi_{v,m},\varphi\rangle|
	&=
	2^{-vn/2}
	|\psi_v*\breve\varphi(-2^{-v}m)|
	\\
	&\lesssim
	2^{-v(\frac n2+L)}
	\|\psi\|_{\mathcal S_{M+1}}
	\|\varphi\|_{\mathcal S_{M+1}}
	(1+|2^{-v}m|)^{-n-L}.
	\end{align*}
	
	Proceeding exactly as in the estimate of $I_1$, we obtain
	\[
	|\lambda_{v,m}|
	\lesssim
	2^{v/\mathbf r}
	(1+|2^{-v}m|)^{\frac1{\mathbf t}-\boldsymbol{\alpha}}
	\|\lambda\|_
	{\dot E_{\vec p}^{\vec\alpha,\vec q}f_\beta^s}.
	\]
	
	Consequently,
	\begin{align*}
	I_2
	&\lesssim
	\|\varphi\|_{\mathcal S_{M+1}}
	\|\psi\|_{\mathcal S_{M+1}}
	\|\lambda\|_
	{\dot E_{\vec p}^{\vec\alpha,\vec q}f_\beta^s}
	\\
	&\qquad\times
	\sum_{k=1}^{\infty}
	\sum_{m\in\mathbb Z^n}
	2^{k(\frac1{\mathbf r}-s-n-L)}
	(1+|2^{-k}m|)^{
		\frac1{\mathbf t}
		-\boldsymbol{\alpha}
		-n-L}.
	\end{align*}
	
	Choosing $L$ sufficiently large, the above series converges, and hence
	\[
	I_2
	\lesssim
	\|\varphi\|_{\mathcal S_{M+1}}
	\|\psi\|_{\mathcal S_{M+1}}
	\|\lambda\|_
	{\dot E_{\vec p}^{\vec\alpha,\vec q}f_\beta^s}.
	\]
	
	Combining the estimates for $I_1$ and $I_2$ completes the proof.
\end{proof}

For a sequence $\lambda =\{\lambda _{k,m}\}_{k\in \mathbb{N}_{0},m\in 
\mathbb{Z}^{n}}\subset \mathbb{C},0<r\leq \infty $ and a fixed $d>0$, set%
\begin{equation*}
\lambda _{k,m,r,d}^{\ast }=\Big(\sum_{h\in \mathbb{Z}^{n}}\frac{|\lambda
_{k,h}|^{r}}{(1+2^{k}|2^{-k}h-2^{-k}m|)^{d}}\Big)^{1/r}
\end{equation*}%
and $\lambda _{r,d}^{\ast }:=\{\lambda _{k,m,r,d}^{\ast }\}_{k\in \mathbb{N}%
_{0},m\in \mathbb{Z}^{n}}\subset \mathbb{C}$ with the usual modification if $%
r=\infty $.

Similarly to the classical Besov and Triebel--Lizorkin spaces (see \cite{FJ90}), we obtain the following estimate.

\begin{lemma}
	\label{lamda-equi copy1}
	Let $s\in \mathbb{R}$, $0<\vec p,\vec q\le \infty$,
	$\vec\alpha=(\alpha_1,\ldots,\alpha_n)\in\mathbb{R}^n$,
	$0<\beta\le\infty$, and $i\in\{1,\ldots,n\}$. Assume that
	\[
	0<a<\min\Bigl(1,p_{-},q_{-},\beta,
	\frac{1}{\alpha_i+\frac1{p_i}}\Bigr),
	\]
	and define
	\[
	r=
	\begin{cases}
	\min(1,p_{-},q_{-},\beta),
	& \text{if }
	\dot{E}_{\vec p}^{\vec\alpha,\vec q}a_{\beta}^{s}
	=
	\dot{E}_{\vec p}^{\vec\alpha,\vec q}f_{\beta}^{s},
	\\[2mm]
	\min(1,p_{-},q_{-}),
	& \text{if }
	\dot{E}_{\vec p}^{\vec\alpha,\vec q}a_{\beta}^{s}
	=
	\dot{E}_{\vec p}^{\vec\alpha,\vec q}b_{\beta}^{s}.
	\end{cases}
	\]
	If $d>\frac{nr}{a}$, then
	\[
	\lambda_{k,m,r,d}^{*}\,\chi_{k,m}
	\lesssim
	\mathcal{M}_{a}
	\Bigl(
	\sum_{h\in\mathbb Z^{n}}
	\lambda_{k,h}\chi_{k,h}
	\Bigr),
	\qquad
	k\in\mathbb N_{0},\; m\in\mathbb Z^{n}.
	\]
\end{lemma}

From Lemma {\ref{lamda-equi copy1} }and Lemma {\ref{Maximal-Inq}, we obtain
the following statement.}

\begin{lemma}
	\label{lamda-equi}
	Let $s\in \mathbb{R}$, $0<\vec{p},\vec{q}\leq \infty$,
	$\vec{\alpha}=(\alpha_{1},\ldots,\alpha_{n})\in\mathbb{R}^{n}$,
	$0<\beta\leq\infty$, and
	$\alpha_i>-\frac1{p_i}$ for all $i\in\{1,\ldots,n\}$.
	Assume that
	\[
	0<a<
	\min\Bigl(1,p_{-},q_{-},\beta,
	\frac{1}{\alpha_i+\frac1{p_i}}\Bigr),
	\]
	and let
	\[
	r=
	\begin{cases}
	\min(1,p_{-},q_{-},\beta),
	& \text{if }
	\dot{E}_{\vec p}^{\vec\alpha,\vec q}a_{\beta}^{s}
	=
	\dot{E}_{\vec p}^{\vec\alpha,\vec q}f_{\beta}^{s},
	\\[2mm]
	\min(1,p_{-},q_{-}),
	& \text{if }
	\dot{E}_{\vec p}^{\vec\alpha,\vec q}a_{\beta}^{s}
	=
	\dot{E}_{\vec p}^{\vec\alpha,\vec q}b_{\beta}^{s}.
	\end{cases}
	\]
	If $d>\frac{nr}{a}$, then
	\[
	\bigl\|\lambda_{r,d}^{*}\bigr\|_
	{\dot{E}_{\vec p}^{\vec\alpha,\vec q}a_{\beta}^{s}}
	\approx
	\bigl\|\lambda\bigr\|_
	{\dot{E}_{\vec p}^{\vec\alpha,\vec q}a_{\beta}^{s}}.
	\]
\end{lemma}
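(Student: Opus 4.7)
The equivalence splits into two inequalities of very different difficulty. The lower bound
\[
\|\lambda\|_{\dot{E}_{\vec{p}}^{\vec{\alpha},\vec{q}}a_{\beta}^{s}} \lesssim \|\lambda_{r,d}^{\ast}\|_{\dot{E}_{\vec{p}}^{\vec{\alpha},\vec{q}}a_{\beta}^{s}}
\]
is immediate: keeping only the term $h=m$ in the definition of $\lambda_{k,m,r,d}^{\ast}$ gives the pointwise bound $|\lambda_{k,m}| \leq \lambda_{k,m,r,d}^{\ast}$, so the monotonicity of the $\dot{E}$-(quasi)norm (in both the $b$ and $f$ variants) finishes this direction.

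The real content is the reverse inequality. The plan is to start from the pointwise estimate of Lemma \ref{lamda-equi copy1}: for any $k\in\mathbb{N}_{0}$ and any $m\in\mathbb{Z}^{n}$,
\[
\lambda_{k,m,r,d}^{\ast}\,\chi_{k,m}(x) \lesssim \mathcal{M}_{a}\!\Big(\sum_{h\in\mathbb{Z}^{n}}\lambda_{k,h}\chi_{k,h}\Big)(x).
\]
Because the cubes $\{Q_{k,m}\}_{m\in\mathbb{Z}^{n}}$ are pairwise disjoint for fixed $k$, summing in $m$ gives, for every $x\in\mathbb{R}^{n}$,
\[
\sum_{m\in\mathbb{Z}^{n}} \bigl(\lambda_{k,m,r,d}^{\ast}\bigr)^{\beta}\chi_{k,m}(x) \lesssim \Big[\mathcal{M}_{a}\!\Big(\sum_{h\in\mathbb{Z}^{n}}\lambda_{k,h}\chi_{k,h}\Big)(x)\Big]^{\beta}.
\]

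For the $F$-scale I would then raise both sides to the $2^{k(s+n/2)\beta}$-weighted $\ell^{\beta}$-sum in $k$, take the $\dot{E}_{\vec{p}}^{\vec{\alpha},\vec{q}}$-norm, and apply Lemma \ref{Maximal-Inq} to the sequence $f_{k}=2^{k(s+n/2)}\sum_{h}\lambda_{k,h}\chi_{k,h}$. The parameter assumption $0<a<\min(1,p_{-},q_{-},\beta)$ is exactly what is needed to validate the hypothesis $0<a<\min(\beta,p_{-},q_{-})$ of the Fefferman--Stein type inequality, so its right-hand side collapses to $\|\lambda\|_{\dot{E}_{\vec{p}}^{\vec{\alpha},\vec{q}}f_{\beta}^{s}}$. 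For the $B$-scale I would instead fix $k$, take the $\dot{E}_{\vec{p}}^{\vec{\alpha},\vec{q}}$-norm first, and invoke the scalar version of Lemma \ref{Maximal-Inq} (which only requires $a<\min(p_{-},q_{-})$); the weighted $\ell^{\beta}$-summation in $k$ is then performed outside, yielding $\|\lambda\|_{\dot{E}_{\vec{p}}^{\vec{\alpha},\vec{q}}b_{\beta}^{s}}$.

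The one step that requires genuine care is the compatibility of the parameter $a$ with \emph{all} constraints simultaneously. The choice $0<a<\min\bigl(1,p_{-},q_{-},\beta,\tfrac{1}{\alpha_{i}+1/p_{i}}\bigr)$ coming from Lemma \ref{lamda-equi copy1} is compatible with $\min(\beta,p_{-},q_{-})$ required by Lemma \ref{Maximal-Inq}, and the decay condition $d>nr/a$ is precisely what lets the covering argument in Lemma \ref{lamda-equi copy1} succeed. I expect the main obstacle to be notational rather than substantive: carefully splitting the two cases $a=\dot{E}_{\vec{p}}^{\vec{\alpha},\vec{q}}f_{\beta}^{s}$ and $a=\dot{E}_{\vec{p}}^{\vec{\alpha},\vec{q}}b_{\beta}^{s}$ (since Lemma \ref{Maximal-Inq} has different requirements on $\beta$ for the two) and, in the $B$-case when $\vec{p}$ or $\vec{q}$ contains $\infty$, reducing to the scalar maximal bound on $\dot{E}_{\vec{p}}^{\vec{\alpha},\vec{q}}$ rather than the vector-valued inequality. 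Once both cases are written out cleanly, combining the pointwise bound with the appropriate maximal inequality closes the proof.
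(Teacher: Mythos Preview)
Your proposal is correct and follows exactly the paper's route: the paper derives Lemma~\ref{lamda-equi} in one line, stating that it follows from the pointwise estimate of Lemma~\ref{lamda-equi copy1} combined with the Fefferman--Stein inequality of Lemma~\ref{Maximal-Inq}. You have simply unpacked those two ingredients and added the (trivial) reverse inequality $|\lambda_{k,m}|\le\lambda_{k,m,r,d}^{\ast}$, which the paper leaves implicit; your remark about the edge case $p_i=\infty$ or $q_i=\infty$ in the $B$-scale even goes slightly beyond what the paper makes explicit.
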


We are now in a position to state the following theorem, known as the
$\varphi$-transform characterization in the sense of Frazier and Jawerth.
Its proof follows by a straightforward adaptation of
\cite[Theorem 2.2]{FJ90}, together with Lemma \ref{lamda-equi}.

\begin{theorem}
	\label{phi-tran}
	Let $s\in \mathbb{R}$, $0<\vec{p},\vec{q}\leq \infty$,
	$\vec{\alpha}=(\alpha_{1},\ldots,\alpha_{n})\in\mathbb{R}^{n}$,
	$0<\beta\leq\infty$, and
	$\alpha_i>-\frac1{p_i}$ for all $i\in\{1,\ldots,n\}$.
	Let $\Phi,\Psi\in\mathcal{S}(\mathbb{R}^{n})$ satisfy \eqref{Ass1},
	and let $\varphi,\psi\in\mathcal{S}(\mathbb{R}^{n})$ satisfy
	\eqref{Ass2} such that \eqref{Ass3} holds.
	
	Then the operators
	\[
	S_{\varphi}:
	\dot{E}_{\vec p}^{\vec\alpha,\vec q}A_{\beta}^{s}
	\longrightarrow
	\dot{E}_{\vec p}^{\vec\alpha,\vec q}a_{\beta}^{s}
	\]
	and
	\[
	T_{\psi}:
	\dot{E}_{\vec p}^{\vec\alpha,\vec q}a_{\beta}^{s}
	\longrightarrow
	\dot{E}_{\vec p}^{\vec\alpha,\vec q}A_{\beta}^{s}
	\]
	are bounded. Moreover,
	\[
	T_{\psi}\circ S_{\varphi}
	=
	I
	\]
	on $\dot{E}_{\vec p}^{\vec\alpha,\vec q}A_{\beta}^{s}$.
\end{theorem}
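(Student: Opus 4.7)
The plan is to follow the Frazier--Jawerth strategy of \cite{FJ90} transplanted to the mixed-norm Herz setting, splitting the proof into three pieces: boundedness of $S_{\varphi}$, boundedness of $T_{\psi}$, and the reconstruction identity $T_{\psi}\circ S_{\varphi}=\mathrm{id}$. The identity piece is free: once both operators are known to map between the stated spaces, applying $S_{\varphi}$ to a distribution $f$ and then $T_{\psi}$ is exactly the decomposition \eqref{proc2} supplied by Lemma \ref{DW-lemma1}.

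For the boundedness of $S_{\varphi}$, I would start from the elementary identity
\[
(S_{\varphi}f)_{k,m}=\langle f,\varphi_{k,m}\rangle = 2^{-kn/2}\,(\tilde{\varphi}_{k}\ast f)(2^{-k}m),
\]
and similarly for $(S_{\varphi}f)_{0,m}$ with $\Phi$ in place of $\varphi$. Since $\mathrm{supp}\,\mathcal{F}(\tilde{\varphi}_{k}\ast f)\subset\{|\xi|\le 2^{k+1}\}$, Lemma \ref{r-trick} (combined with the standard fact that values of a band-limited function at a single point are controlled by its maximal function on the corresponding dyadic cube) yields, for any $0<t<\min(p_{-},q_{-},\beta)$ small enough to also satisfy $t<\bigl(\frac{1}{p_i}+\alpha_i\bigr)^{-1}$ for every $i$,
\[
|(\tilde{\varphi}_{k}\ast f)(2^{-k}m)|\,\chi_{k,m}(x)\lesssim \mathcal{M}_{t}(\tilde{\varphi}_{k}\ast f)(x),\qquad x\in Q_{k,m}.
\]
Summing over $m$ yields a pointwise bound of $2^{k(s+n/2)}\sum_{m}|(S_{\varphi}f)_{k,m}|\chi_{k,m}$ by $2^{ks}\mathcal{M}_{t}(\tilde{\varphi}_{k}\ast f)$. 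The vector-valued maximal inequality of Lemma \ref{Maximal-Inq}, taken in $\ell^{\beta}$ (inside the $\dot{E}$-norm for the $F$-scale, outside it for the $B$-scale), then gives $\|S_{\varphi}f\|_{\dot{E}_{\vec{p}}^{\vec{\alpha},\vec{q}}a_{\beta}^{s}}\lesssim \|f\|_{\dot{E}_{\vec{p}}^{\vec{\alpha},\vec{q}}A_{\beta}^{s}}$.

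For the boundedness of $T_{\psi}$, convergence of the defining series in $\mathcal{S}'$ is already guaranteed by Lemma \ref{Inv-phi-trans}, so only the norm bound on $\varphi_{j}\ast(T_{\psi}\lambda)$ remains. Using that $\varphi$ and $\psi$ have vanishing moments of all orders, the standard kernel estimate (as in \eqref{convolution})
\[
|\varphi_{j}\ast\psi_{k,m}(x)|\lesssim 2^{-|j-k|L}\,2^{kn/2}\,(1+2^{\min(j,k)}|x-2^{-k}m|)^{-M},
\]
valid for arbitrarily large $L,M$, combined with the definition of $\lambda_{k,m,r,d}^{\ast}$, produces
\[
2^{js}|\varphi_{j}\ast(T_{\psi}\lambda)(x)|\lesssim \sum_{k}2^{-|j-k|\sigma}\,2^{ks}\sum_{m}\lambda_{k,m,r,d}^{\ast}\chi_{k,m}(x)
\]
for some $\sigma>0$, provided $L$ is chosen large relative to $s$, $n$, the Herz exponents, and $d$. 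Lemma \ref{lem:lq-inequality} in the index $k$ (applied in $\ell^{\beta}$ or inside the $\dot{E}$-norm, according to the $B$-/$F$-case) absorbs the geometric factor $2^{-|j-k|\sigma}$, and Lemma \ref{lamda-equi} then identifies the resulting quantity, up to constants, with $\|\lambda\|_{\dot{E}_{\vec{p}}^{\vec{\alpha},\vec{q}}a_{\beta}^{s}}$.

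The main obstacle I expect lies in choosing the parameters $a$, $r$, $d$, $L$, $M$, and $t$ compatibly: the constraint $a<\bigl(\frac{1}{p_i}+\alpha_i\bigr)^{-1}$ from Lemma \ref{lamda-equi}, the constraint $t<\min(p_{-},q_{-},\beta)$ from Lemma \ref{Maximal-Inq}, and the requirement $d>nr/a$, together with the need to dominate $2^{js}$ by $2^{kL}$ uniformly in $|j-k|$, must all be reconciled simultaneously. Once the pointwise kernel bound is in hand, the $B$- versus $F$-case distinction reduces to whether the $\ell^{\beta}$-summation and the $\dot{E}_{\vec{p}}^{\vec{\alpha},\vec{q}}$-norm are taken in one order or the other, which is routine given Lemmas \ref{Maximal-Inq} and \ref{lem:lq-inequality}.
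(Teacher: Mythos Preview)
Your proposal is correct and follows essentially the same route the paper indicates: the paper's own proof is the single sentence that the result is a ``straightforward adaptation of \cite[Theorem 2.2]{FJ90}, with the help of Lemma \ref{lamda-equi},'' and your outline is precisely that adaptation spelled out, with Lemma \ref{Maximal-Inq} handling the $S_{\varphi}$ direction and Lemmas \ref{lem:lq-inequality} and \ref{lamda-equi} handling $T_{\psi}$, while the identity $T_{\psi}\circ S_{\varphi}=\mathrm{id}$ comes from Lemma \ref{DW-lemma1}. Your care in tracking the compatibility of the parameters $t$, $a$, $r$, $d$, $L$, $M$ is exactly the bookkeeping the adaptation requires, and nothing further is needed.
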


\begin{remark}
	Theorem \ref{phi-tran} provides a powerful tool for studying the spaces
	$\dot{E}_{\vec p}^{\vec\alpha,\vec q}A_{\beta}^{s}$.
	Indeed, many problems can be transferred from the function space to the
	corresponding sequence space, where they are often easier to handle.
	
	More precisely, under the assumptions of Theorem \ref{phi-tran}, one has
	\[
	\bigl\|
	\{\langle f,\varphi_{k,m}\rangle\}_{k\in\mathbb N_{0},\,m\in\mathbb Z^{n}}
	\bigr\|_
	{\dot{E}_{\vec p}^{\vec\alpha,\vec q}a_{\beta}^{s}}
	\approx
	\|f\|_
	{\dot{E}_{\vec p}^{\vec\alpha,\vec q}A_{\beta}^{s}}.
	\]
\end{remark}
Following the arguments in \cite{Dr-AOT23}, we obtain the following result.

\begin{lemma}
	Let $s\in \mathbb{R}$, $0<\vec{p},\vec{q}\leq \infty$, $\vec{\alpha}=(\alpha
	_{1},\ldots,\alpha _{n})\in \mathbb{R}^{n}$, $0<\beta \leq \infty$, and
	$\alpha _{i}>-\frac{1}{p_{i}}$ for all $i\in \{1,\ldots,n\}$. Then the spaces
	$\dot{E}_{\vec{p}}^{\vec{\alpha},\vec{q}}a_{\beta }^{s}$ are quasi-Banach
	spaces. Moreover, they are Banach spaces whenever
	$1\leq \vec{p},\vec{q},\beta \leq \infty$.
\end{lemma}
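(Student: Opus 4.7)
The plan is to verify completeness (the quasi-norm axioms and the triangle inequality in the Banach range both follow immediately from Minkowski's inequality applied to the nested structure of $\dot{E}_{\vec{p}}^{\vec{\alpha},\vec{q}}(\mathbb{R}^{n})$ together with the $\ell^{\beta}$-norm, so the essential work is showing every Cauchy sequence has a limit in the space). Throughout, I would treat $\dot{E}_{\vec{p}}^{\vec{\alpha},\vec{q}}f_{\beta }^{s}$, the case of $\dot{E}_{\vec{p}}^{\vec{\alpha},\vec{q}}b_{\beta }^{s}$ being analogous and slightly easier because the $\ell^{\beta}$ summation sits on the outside.

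First I would extract from the proof of Lemma \ref{Inv-phi-trans} a pointwise control of the individual coefficients by the quasi-norm of the sequence. Namely, the same iterated H\"older argument used there yields, for appropriate exponents $\vec{r},\vec{t}$ chosen as in that proof, an estimate of the form
\begin{equation*}
|\lambda _{k,m}|\lesssim 2^{k/\mathbf{r}}\bigl(1+|2^{-k}m|\bigr)^{\frac{1}{\mathbf{t}}-\mathbf{\alpha }}\,\big\|\lambda \big\|_{\dot{E}_{\vec{p}}^{\vec{\alpha},\vec{q}}a_{\beta }^{s}}
\end{equation*}
valid for every $k\in \mathbb{N}_{0}$ and $m\in \mathbb{Z}^{n}$, with an implicit constant independent of $\lambda $. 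Applied to $\lambda ^{(j)}-\lambda ^{(l)}$ for a Cauchy sequence $\{\lambda ^{(j)}\}_{j}\subset \dot{E}_{\vec{p}}^{\vec{\alpha},\vec{q}}a_{\beta }^{s}$, this shows that each numerical sequence $\{\lambda _{k,m}^{(j)}\}_{j}$ is Cauchy in $\mathbb{C}$; denote its limit by $\lambda _{k,m}$ and set $\lambda =\{\lambda _{k,m}\}_{k\in \mathbb{N}_{0},m\in \mathbb{Z}^{n}}$.

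Next I would show $\lambda \in \dot{E}_{\vec{p}}^{\vec{\alpha},\vec{q}}a_{\beta }^{s}$ and $\lambda ^{(j)}\to \lambda $ in the quasi-norm by a Fatou argument applied step by step to the nested quasi-norms. For each fixed $x$, the pointwise function
\begin{equation*}
G_{j}(x)=\Bigl(\sum_{k=0}^{\infty }\sum_{m\in \mathbb{Z}^{n}}2^{k(s+\frac{n}{2})\beta }|\lambda _{k,m}^{(j)}-\lambda _{k,m}^{(l)}|^{\beta }\chi _{k,m}(x)\Bigr)^{1/\beta }
\end{equation*}
converges pointwise as $j\to \infty $ by monotone convergence to the analogous expression $G(x)$ with $\lambda ^{(j)}$ replaced by $\lambda $ (fixing $l$). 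Applying Fatou's lemma successively inside each of the integrals defining the iterated norm $\|\cdot \|_{\dot{E}_{\vec{p}}^{\vec{\alpha},\vec{q}}(\mathbb{R}^{n})}$ (integrating first in $x_{1}$, then $x_{2}$, and so on; the powers occurring are all positive, so Fatou applies at each stage) yields
\begin{equation*}
\big\|\lambda -\lambda ^{(l)}\big\|_{\dot{E}_{\vec{p}}^{\vec{\alpha},\vec{q}}f_{\beta }^{s}}\leq \liminf_{j\to \infty }\big\|\lambda ^{(j)}-\lambda ^{(l)}\big\|_{\dot{E}_{\vec{p}}^{\vec{\alpha},\vec{q}}f_{\beta }^{s}},
\end{equation*}
which tends to $0$ as $l\to \infty $ by the Cauchy property. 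This both gives $\lambda -\lambda ^{(l)}\in \dot{E}_{\vec{p}}^{\vec{\alpha},\vec{q}}f_{\beta }^{s}$ (whence $\lambda $ lies in the space) and the desired convergence $\lambda ^{(l)}\to \lambda $ in quasi-norm.

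The main obstacle I anticipate is the bookkeeping of the iterated Fatou step: the discrete sums on $k$ and $m$ interact with the continuous integrals in the $\dot{K}_{p_{i}}^{\alpha _{i},q_{i}}$-norms, and one must verify that Fatou may indeed be applied at every level (this uses positivity and that $s+\frac{n}{2}$, $\alpha _{i}$, $k_{i}\alpha _{i}q_{i}$ appear only as positive weights, not as signed measures). Once completeness is established, the Banach property for $1\leq \vec{p},\vec{q},\beta \leq \infty $ is a direct consequence of Minkowski's inequality applied successively in each of the $L^{p_{i}}$-, $\ell ^{q_{i}}$-, and $\ell ^{\beta }$-layers, which gives the genuine triangle inequality; combined with completeness this yields the Banach space structure.
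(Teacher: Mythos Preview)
Your approach is sound and is the standard direct route to completeness for such sequence spaces; the paper itself does not write out a proof but only refers to the arguments in \cite{Dr-AOT23}, so your argument is in fact more explicit than what appears in the text.

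Two small corrections. First, the coefficient estimate you extract from the proof of Lemma~\ref{Inv-phi-trans} should carry the factor $2^{-k(s+\frac{n}{2})}$ coming from the weight in the definition of the $a$-space quasi-norm, i.e.
\begin{equation*}
|\lambda_{k,m}|\lesssim 2^{k(\frac{1}{\mathbf{r}}-s-\frac{n}{2})}\bigl(1+|2^{-k}m|\bigr)^{\frac{1}{\mathbf{t}}-\mathbf{\alpha}}\,\big\|\lambda\big\|_{\dot{E}_{\vec{p}}^{\vec{\alpha},\vec{q}}a_{\beta}^{s}};
\end{equation*}
this is harmless for your purpose, since all you need is \emph{some} bound depending only on $(k,m)$. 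Second, the passage from $G_{j}(x)$ to $G(x)$ is not by monotone convergence (nothing is monotone in $j$); it is again Fatou's lemma, now applied to the $\ell^{\beta}$-sum over $k$: termwise convergence $|\lambda^{(j)}_{k,m}-\lambda^{(l)}_{k,m}|\to|\lambda_{k,m}-\lambda^{(l)}_{k,m}|$ gives $G(x)\le\liminf_{j}G_{j}(x)$, and that inequality is all the subsequent iterated Fatou step requires. Finally, your stated worry about the signs of the weights is unnecessary: the quantities $2^{k(s+\frac{n}{2})\beta}$ and $2^{k_{i}\alpha_{i}q_{i}}$ are positive real numbers whatever the signs of $s$ or $\alpha_{i}$, so Fatou applies at every layer simply because the integrands are nonnegative, not because the exponents are.
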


Combining this lemma with Theorem \ref{phi-tran} and the results of
\cite{GJN17}, we obtain the following useful property of these function
spaces.

\begin{theorem}
	Let $s\in \mathbb{R}$, $0<\vec{p},\vec{q}\leq \infty$, $\vec{\alpha}=(\alpha
	_{1},\ldots,\alpha _{n})\in \mathbb{R}^{n}$, $0<\beta \leq \infty$, and
	$\alpha _{i}>-\frac{1}{p_{i}}$ for all $i\in \{1,\ldots,n\}$. Then the spaces
	$\dot{E}_{\vec{p}}^{\vec{\alpha},\vec{q}}A_{\beta }^{s}$ are quasi-Banach
	spaces. Moreover, they are Banach spaces whenever
	$1\leq \vec{p},\vec{q},\beta \leq \infty$.
\end{theorem}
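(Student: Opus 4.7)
The plan is to transport the quasi-Banach structure from the sequence side (provided by the preceding lemma on $\dot{E}_{\vec{p}}^{\vec{\alpha},\vec{q}}a_{\beta}^{s}$) to the function-space side via the $\varphi$-transform isomorphism of Theorem \ref{phi-tran}, and then to upgrade to a Banach-space structure in the usual range by classical Minkowski arguments.

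First, I would verify the quasi-triangle inequality directly from the definition. Setting $d=\min(1,p_-,q_-,\beta)$ in the $F$-case (and $d=\min(1,p_-,q_-)$ in the $B$-case), the underlying mixed-norm Herz quasi-norm is $d$-subadditive, by iterating $|a+b|^{d}\leq |a|^{d}+|b|^{d}$ through each $L^{p_i}$ layer and each $\ell^{q_i}$ layer of the definition, and $\ell^{\beta}$ is $\min(1,\beta)$-subadditive. Combining these yields
\[
\|f+g\|_{\dot{E}_{\vec{p}}^{\vec{\alpha},\vec{q}}A_{\beta}^{s}}^{d} \leq \|f\|_{\dot{E}_{\vec{p}}^{\vec{\alpha},\vec{q}}A_{\beta}^{s}}^{d}+\|g\|_{\dot{E}_{\vec{p}}^{\vec{\alpha},\vec{q}}A_{\beta}^{s}}^{d},
\]
which exhibits $\dot{E}_{\vec{p}}^{\vec{\alpha},\vec{q}}A_{\beta}^{s}$ as a quasi-normed space.

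For completeness, let $\{f_{j}\}_{j\in\mathbb{N}}$ be Cauchy. By boundedness of $S_{\varphi}$ (Theorem \ref{phi-tran}), the image sequence $\{S_{\varphi}f_{j}\}_{j}$ is Cauchy in $\dot{E}_{\vec{p}}^{\vec{\alpha},\vec{q}}a_{\beta}^{s}$, which is complete by the preceding lemma; hence $S_{\varphi}f_{j}\to \lambda$ for some $\lambda$. Boundedness of $T_{\psi}$ then yields $T_{\psi}S_{\varphi}f_{j}\to T_{\psi}\lambda$ in $\dot{E}_{\vec{p}}^{\vec{\alpha},\vec{q}}A_{\beta}^{s}$, and since $T_{\psi}\circ S_{\varphi}=\mathrm{id}$, this reads $f_{j}\to T_{\psi}\lambda$, with $T_{\psi}\lambda\in \mathcal{S}^{\prime}(\mathbb{R}^{n})$ ensured by Lemma \ref{Inv-phi-trans}. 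The identity $T_{\psi}\circ S_{\varphi}=\mathrm{id}$ makes $\dot{E}_{\vec{p}}^{\vec{\alpha},\vec{q}}A_{\beta}^{s}$ a retract of the sequence space, and this retract mechanism is exactly what lets completeness pass from the lemma to the theorem; it is the only genuinely non-routine point and is already encoded in Theorem \ref{phi-tran}.

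Finally, for the Banach case $1\leq p_{i},q_{i},\beta\leq \infty$, the $d$-subadditivity of the first step sharpens to the honest triangle inequality by Minkowski's inequality applied successively in each $L^{p_{i}}$ norm, in each $\ell^{q_{i}}$ layer of the mixed Herz norm, and in the outer $\ell^{\beta}$ (respectively inside $\dot{E}_{\vec{p}}^{\vec{\alpha},\vec{q}}(\mathbb{R}^{n})$ in the $F$-case). Together with the completeness established above, this shows $\dot{E}_{\vec{p}}^{\vec{\alpha},\vec{q}}A_{\beta}^{s}$ is Banach.
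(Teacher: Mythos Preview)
Your proposal is correct and follows essentially the same approach as the paper, which simply invokes the preceding lemma (completeness of the sequence spaces $\dot{E}_{\vec{p}}^{\vec{\alpha},\vec{q}}a_{\beta}^{s}$), Theorem~\ref{phi-tran}, and \cite{GJN17} without spelling out the details. Your retract argument via $T_{\psi}\circ S_{\varphi}=\mathrm{id}$ is precisely the mechanism those citations encode, and your explicit verification of the $d$-subadditivity and the Minkowski step in the Banach range fills in what the paper leaves implicit.
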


Let $\vartheta$ be a function in $\mathcal{S}(\mathbb{R}^{n})$ satisfying
\begin{equation}
\vartheta(x)=1 \quad \text{for } |x|\leq 1,
\qquad
\vartheta(x)=0 \quad \text{for } |x|\geq \frac{3}{2}.
\label{function-v}
\end{equation}
Define
\[
\psi_{0}(x)=\vartheta(x), \qquad
\psi_{1}(x)=\vartheta\!\left(\frac{x}{2}\right)-\vartheta(x),
\]
and
\[
\psi_{k}(x)=\psi_{1}(2^{-k+1}x), \qquad k=2,3,\ldots .
\]
Then
\[
\operatorname{supp}\psi_{k}
\subset
\bigl\{x\in\mathbb{R}^{n}:2^{k-1}\leq |x|
\leq 3\cdot 2^{k-1}\bigr\},
\]
and
\begin{equation}
\sum_{k=0}^{\infty}\psi_{k}(x)=1,
\qquad x\in\mathbb{R}^{n}.
\label{partition}
\end{equation}
The family $\{\psi_{k}\}_{k\in\mathbb{N}_{0}}$ is called a smooth dyadic
resolution of unity. Consequently, every
$f\in\mathcal{S}'(\mathbb{R}^{n})$ admits the Littlewood--Paley
decomposition
\[
f=\sum_{k=0}^{\infty}\mathcal{F}^{-1}\psi_{k}*f,
\]
where the series converges in $\mathcal{S}'(\mathbb{R}^{n})$.

We define
\[
\|f\|_{\dot{E}_{\vec{p}}^{\vec{\alpha},\vec{q}}B_{\beta}^{s}}^{\psi_{0},\psi_{1}}
=
\Bigg(
\sum_{k=0}^{\infty}
2^{ks\beta}
\bigl\|
\mathcal{F}^{-1}\psi_{k}*f
\bigr\|_{\dot{E}_{\vec{p}}^{\vec{\alpha},\vec{q}}(\mathbb{R}^{n})}^{\beta}
\Bigg)^{1/\beta},
\]
and
\[
\|f\|_{\dot{E}_{\vec{p}}^{\vec{\alpha},\vec{q}}F_{\beta}^{s}}^{\psi_{0},\psi_{1}}
=
\Bigg\|
\Bigg(
\sum_{k=0}^{\infty}
2^{ks\beta}
\bigl|
\mathcal{F}^{-1}\psi_{k}*f
\bigr|^{\beta}
\Bigg)^{1/\beta}
\Bigg\|_{\dot{E}_{\vec{p}}^{\vec{\alpha},\vec{q}}(\mathbb{R}^{n})}.
\]

The following theorem is proved in the same spirit as the corresponding
result for classical Besov and Triebel--Lizorkin spaces, with the aid of
Lemma \ref{Maximal-Inq}.

\begin{theorem}
	\label{partition-equi}
	Let $s\in \mathbb{R}$, $0<\vec{p},\vec{q}\leq \infty$,
	$\vec{\alpha}=(\alpha _{1},\ldots,\alpha _{n})\in \mathbb{R}^{n}$,
	$0<\beta \leq \infty$, and
	$\alpha _{i}>-\frac{1}{p_{i}}$ for all $i\in \{1,\ldots,n\}$.
	Then a tempered distribution
	$f$ belongs to
	$\dot{E}_{\vec{p}}^{\vec{\alpha},\vec{q}}A_{\beta}^{s}$
	if and only if
	\[
	\|f\|_{\dot{E}_{\vec{p}}^{\vec{\alpha},\vec{q}}A_{\beta}^{s}}^{\psi_{0},\psi_{1}}
	<\infty.
	\]
	Furthermore, the quasi-norms
	\[
	\|f\|_{\dot{E}_{\vec{p}}^{\vec{\alpha},\vec{q}}A_{\beta}^{s}}
	\quad\text{and}\quad
	\|f\|_{\dot{E}_{\vec{p}}^{\vec{\alpha},\vec{q}}A_{\beta}^{s}}^{\psi_{0},\psi_{1}}
	\]
	are equivalent.
\end{theorem}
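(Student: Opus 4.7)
The plan is to establish the equivalence $\|\cdot\|_{\dot{E}_{\vec{p}}^{\vec{\alpha},\vec{q}}A_\beta^s}\approx\|\cdot\|_{\dot{E}_{\vec{p}}^{\vec{\alpha},\vec{q}}A_\beta^s}^{\psi_0,\psi_1}$ via a two-sided quasi-norm estimate, following the standard Besov/Triebel--Lizorkin pattern adapted to the mixed-norm Herz setting through Lemma~\ref{r-trick} and Lemma~\ref{Maximal-Inq}. The essential spectral ingredient is that, for some fixed integer $N_0$, the Fourier supports of $\varphi_j$ (with $\varphi_0=\Phi$) and of $\mathcal{F}^{-1}\psi_k$ are disjoint whenever $|j-k|>N_0$, since both are dyadic annuli (respectively balls at the low-frequency end) of matching scale.

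First I would use a Calder\'on-type reproducing identity---for instance Lemma~\ref{DW-lemma1} on one side and the trivial identity $f=\sum_{j\ge 0}\mathcal{F}^{-1}\psi_j\ast f$ coming from $\sum_j\psi_j=1$ on the other---to write each block $\mathcal{F}^{-1}\psi_k\ast f$ (respectively $\varphi_k\ast f$) as a sum of at most $2N_0+1$ terms of the form (narrow-band bump at scale $2^k$) $\ast$ (frequency block of $f$ from the opposite system). Each such summand has Fourier support in a ball of radius $c\,2^k$, so Lemma~\ref{r-trick} gives the pointwise majoration
\[
|\mathcal{F}^{-1}\psi_k\ast\varphi_j\ast f(x)|\lesssim \bigl(\eta_{k,m/n}\ast|\varphi_j\ast f|^t(x)\bigr)^{1/t}\lesssim \mathcal{M}_t(\varphi_j\ast f)(x)
\]
for any $t>0$ and $m>n$ sufficiently large; the second inequality uses that the kernel $\eta_{k,m/n}$ is dominated, up to a constant depending only on $n$ and on the shift $|j-k|\le N_0$, by an iterate of the Hardy--Littlewood maximal kernel. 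A symmetric pointwise bound holds with the roles of $\psi$ and $\varphi$ interchanged.

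Next I would choose $t$ small and finish with Lemma~\ref{Maximal-Inq}. For the $F$-scale, I multiply by $2^{ks}$, take $\ell^\beta$ in $k$ (the $\ell^\beta$ quasi-triangle inequality absorbs the $O(1)$-width sum), and invoke Lemma~\ref{Maximal-Inq} to push $\mathcal{M}_t$ through the outer quasi-norm. For the $B$-scale, where $\vec{p}$ or $\vec{q}$ may equal infinity, I would instead take the $\dot{E}_{\vec{p}}^{\vec{\alpha},\vec{q}}$-quasi-norm at each fixed $k$ using the scalar form of the maximal bound, then take $\ell^\beta$ in $k$ with the index shift $|j-k|\le N_0$ handled by Lemma~\ref{lem:lq-inequality}. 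The reverse inequality follows symmetrically, and the low-frequency block ($k=0$, involving $\Phi$ and $\Psi$) requires only cosmetic modifications of the same scheme.

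The main obstacle will be the parameter selection: Lemma~\ref{Maximal-Inq} as stated imposes the two-sided range $-1/p_i<\alpha_i<1-1/p_i$, whereas the hypothesis here is only $\alpha_i>-1/p_i$. I would circumvent this by applying Lemma~\ref{Maximal-Inq} to $|\cdot|^t$ in the rescaled space $\dot{E}_{\vec{p}/t}^{t\vec{\alpha},\vec{q}/t}(\mathbb{R}^n)$, where the upper constraint becomes $t\alpha_i<1-t/p_i$, automatic for $t>0$ sufficiently small. This choice is compatible with the constraint $0<t<\min(1,p_-,q_-,\beta)$ needed in the second step, so no further arithmetic obstacle remains.
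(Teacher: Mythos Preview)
Your proposal is correct and matches the paper's approach: the paper states only that the theorem ``can be obtained as in the classical Besov and Triebel--Lizorkin spaces but with the help of Lemma~\ref{Maximal-Inq},'' and your sketch is precisely that classical scheme (finite spectral overlap, pointwise domination by $\mathcal{M}_t$ via Lemma~\ref{r-trick}, then the vector-valued maximal inequality). Your observation that the upper restriction $\alpha_i<1-1/p_i$ in Lemma~\ref{Maximal-Inq} is removed by working with $|\cdot|^t$ in $\dot{E}_{\vec{p}/t}^{t\vec{\alpha},\vec{q}/t}$ for small $t$ is a detail the paper leaves implicit, and your handling of the $B$-case with possibly infinite $p_i,q_i$ via the scalar maximal bound at each level is the appropriate substitute.
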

\section{Embeddings}
The following theorem provides basic embeddings for the spaces
$\dot{E}_{\vec{p}}^{\vec{\alpha},\vec{q}}A_{\beta}^{s}$.

\begin{theorem}
	\label{embeddings1.1}
	Let $s\in\mathbb{R}$, $0<\vec{p},\vec{q}\leq\infty$,
	$\vec{\alpha}=(\alpha_1,\ldots,\alpha_n)\in\mathbb{R}^{n}$,
	$0<\beta\leq\infty$, and
	$\alpha_i>-\frac1{p_i}$ for all $i\in\{1,\ldots,n\}$.
	
	\begin{enumerate}
		\item[(i)] If $0<\beta_1\leq\beta_2\leq\infty$, then
		\begin{equation}
		\dot{E}_{\vec{p}}^{\vec{\alpha},\vec{q}}A_{\beta_1}^{s}
		\hookrightarrow
		\dot{E}_{\vec{p}}^{\vec{\alpha},\vec{q}}A_{\beta_2}^{s}.
		\label{embed1}
		\end{equation}
		
		\item[(ii)] If $0<\beta_1,\beta_2\leq\infty$ and $\varepsilon>0$, then
		\begin{equation}
		\dot{E}_{\vec{p}}^{\vec{\alpha},\vec{q}}A_{\beta_1}^{s+\varepsilon}
		\hookrightarrow
		\dot{E}_{\vec{p}}^{\vec{\alpha},\vec{q}}A_{\beta_2}^{s}.
		\label{embed2}
		\end{equation}
		
		\item[(iii)] Let
		$\vec{q}_1=(q_1^1,\ldots,q_n^1)\in(0,\infty]^n$ and
		$\vec{q}_2=(q_1^2,\ldots,q_n^2)\in(0,\infty]^n$.
		If $0<q_i^1\leq q_i^2\leq\infty$ for all $i\in\{1,\ldots,n\}$, then
		\begin{equation}
		\dot{E}_{\vec{p}}^{\vec{\alpha},\vec{q}_1}A_{\beta}^{s}
		\hookrightarrow
		\dot{E}_{\vec{p}}^{\vec{\alpha},\vec{q}_2}A_{\beta}^{s}.
		\label{embed3}
		\end{equation}
		
		\item[(iv)] Let
		$\vec{p}_1=(p_1^1,\ldots,p_n^1)\in(0,\infty)^n$ and
		$\vec{p}_2=(p_1^2,\ldots,p_n^2)\in(0,\infty)^n$.
		If $0<p_i^1\leq p_i^2\leq\infty$ for all $i\in\{1,\ldots,n\}$, then
		\begin{equation}
		\dot{E}_{\vec{p}_2}^{\vec{\alpha},\vec{q}}A_{\beta}^{s}
		\hookrightarrow
		\dot{E}_{\vec{p}_1}^{\vec{r},\vec{q}}A_{\beta}^{s},
		\label{embed4}
		\end{equation}
		where
		\[
		r_i=\alpha_i-\Big(\frac1{p_i^1}-\frac1{p_i^2}\Big),
		\qquad i\in\{1,\ldots,n\}.
		\]
	\end{enumerate}
\end{theorem}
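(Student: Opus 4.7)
The plan is to reduce each of the four embeddings to an inequality acting on the Littlewood--Paley blocks $\varphi_{k}\ast f$ (either pointwise in $x$ or after taking the $\dot{E}_{\vec{p}}^{\vec{\alpha},\vec{q}}$-quasi-norm) and then to reassemble via Theorem \ref{partition-equi}. A recurring feature that makes this reduction work is the monotonicity of $\|\cdot\|_{\dot{E}_{\vec{p}}^{\vec{\alpha},\vec{q}}}$ in $|f|$, inherited from its $L^{p_{i}}$ and $\ell^{q_{i}}$ building blocks.

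For (i) I would invoke the classical sequence embedding $\ell^{\beta_{1}}\hookrightarrow \ell^{\beta_{2}}$: in the Besov case applied to the scalar sequence $\{2^{ks}\|\varphi_{k}\ast f\|_{\dot{E}_{\vec{p}}^{\vec{\alpha},\vec{q}}}\}_{k}$, and in the Triebel--Lizorkin case applied pointwise to $\{2^{ks}|\varphi_{k}\ast f(x)|\}_{k}$ before taking the outer quasi-norm. Part (ii) follows from the weighted sequence inequality
\begin{equation*}
\Bigl(\sum_{k\geq 0}2^{ks\beta_{2}}|a_{k}|^{\beta_{2}}\Bigr)^{1/\beta_{2}}\lesssim \Bigl(\sum_{k\geq 0}2^{k(s+\varepsilon)\beta_{1}}|a_{k}|^{\beta_{1}}\Bigr)^{1/\beta_{1}},
\end{equation*}
valid for any $\varepsilon>0$; writing $b_{k}=a_{k}2^{k(s+\varepsilon)}$, this becomes $\|\{b_{k}2^{-k\varepsilon}\}\|_{\ell^{\beta_{2}}}\lesssim \|\{b_{k}\}\|_{\ell^{\beta_{1}}}$, which is checked by H\"{o}lder's inequality when $\beta_{2}<\beta_{1}$ and by $\ell^{\infty}$-domination otherwise. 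The passage from scalar sequences to function spaces mirrors the treatment of (i). For (iii), the elementary monotonicity $\ell^{q_{1}^{i}}\hookrightarrow \ell^{q_{2}^{i}}$ in the $i$-th layer of the iterated Herz norm yields $\|g\|_{\dot{E}_{\vec{p}}^{\vec{\alpha},\vec{q}_{2}}}\lesssim \|g\|_{\dot{E}_{\vec{p}}^{\vec{\alpha},\vec{q}_{1}}}$ for a single function $g$, and applying this with $g=\varphi_{k}\ast f$ gives \eqref{embed3}.

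Part (iv) is the substantive case. The core ingredient is a one-dimensional H\"{o}lder estimate on the annuli $R_{k_{i}}$: since $|R_{k_{i}}|\approx 2^{k_{i}}$ and $\frac{1}{p_{i}^{1}}=\frac{1}{p_{i}^{2}}+\bigl(\frac{1}{p_{i}^{1}}-\frac{1}{p_{i}^{2}}\bigr)$ with the second term non-negative, H\"{o}lder's inequality delivers
\begin{equation*}
\|g\,\chi_{k_{i}}\|_{p_{i}^{1}}\lesssim 2^{k_{i}(\frac{1}{p_{i}^{1}}-\frac{1}{p_{i}^{2}})}\|g\,\chi_{k_{i}}\|_{p_{i}^{2}}.
\end{equation*}
Multiplying by $2^{k_{i}r_{i}}$ with $r_{i}=\alpha_{i}-\bigl(\frac{1}{p_{i}^{1}}-\frac{1}{p_{i}^{2}}\bigr)$ reproduces the weight $2^{k_{i}\alpha_{i}}$ on the right-hand side; taking $\ell^{q_{i}}$ over $k_{i}\in\mathbb{Z}$ yields the scalar Herz embedding $\dot{K}_{p_{i}^{2}}^{\alpha_{i},q_{i}}\hookrightarrow \dot{K}_{p_{i}^{1}}^{r_{i},q_{i}}$ in the $i$-th variable. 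I would then iterate this layer by layer through $i=1,\dots,n$, using the nested structure of the mixed-norm quasi-norm together with its monotonicity in $|f|$, to obtain $\|g\|_{\dot{E}_{\vec{p}_{1}}^{\vec{r},\vec{q}}}\lesssim \|g\|_{\dot{E}_{\vec{p}_{2}}^{\vec{\alpha},\vec{q}}}$ for a single non-negative function $g$. Finally, I would apply this scalar embedding with $g=\varphi_{k}\ast f$ (Besov case) inside the outer $\ell^{\beta}$-sum in $k$, and with $g=\bigl(\sum_{k\geq 0}2^{ks\beta}|\varphi_{k}\ast f|^{\beta}\bigr)^{1/\beta}$ (Triebel--Lizorkin case), concluding via Theorem \ref{partition-equi}.

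The main obstacle I anticipate is the bookkeeping of the iteration in (iv): one must verify that the one-dimensional Herz embedding, which is only natural in the variable $x_{i}$ at layer $i$, propagates through the already-performed outer layer norms without disturbing the remaining parameters, and that the implicit constants remain independent of the frozen variables $(x_{i+1},\dots,x_{n})$. Once this coordinate-by-coordinate propagation is justified, all four assertions follow cleanly from the Littlewood--Paley characterization combined with the monotonicity of the outer quasi-norm.
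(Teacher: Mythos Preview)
Your proposal is correct and follows essentially the same route as the paper: parts (i) and (iii) via the sequence embeddings $\ell^{\beta_{1}}\hookrightarrow\ell^{\beta_{2}}$ and $\ell^{q_{1}^{i}}\hookrightarrow\ell^{q_{2}^{i}}$, part (ii) via the weighted $\ell^{\beta}$ inequality exploiting $\varepsilon>0$, and part (iv) via H\"{o}lder's inequality on each annulus $R_{k_{i}}$ iterated through the layers of the mixed Herz norm. The paper's proof is terser (it dispatches (iv) with the single phrase ``follows immediately [from] H\"{o}lder's inequality''), but your more explicit layer-by-layer bookkeeping is exactly what underlies that sentence, and the obstacle you flag is indeed routine.
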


\begin{proof}
	The embeddings \eqref{embed1} and \eqref{embed3} are immediate consequences
	of the embeddings between Lebesgue sequence spaces and the embedding
	\[
	\dot{E}_{\vec{p}}^{\vec{\alpha},\vec{q}_1}(\mathbb{R}^n)
	\hookrightarrow
	\dot{E}_{\vec{p}}^{\vec{\alpha},\vec{q}_2}(\mathbb{R}^n).
	\]
	
	Let $\Phi$ and $\varphi$ satisfy \eqref{Ass1} and \eqref{Ass2},
	respectively, and let
	$f\in \dot{E}_{\vec{p}}^{\vec{\alpha},\vec{q}}F_{\beta_1}^{s+\varepsilon}$.
	To prove \eqref{embed2}, note that $\varepsilon>0$ implies
	\[
	\Big\|
	\Big(
	\sum_{k=0}^{\infty}
	2^{ks\beta_2}
	|\varphi_k*f|^{\beta_2}
	\Big)^{1/\beta_2}
	\Big\|_{\dot{E}_{\vec{p}}^{\vec{\alpha},\vec{q}}(\mathbb{R}^n)}
	\leq
	c
	\Big\|
	\sup_{k\in\mathbb{N}_0}
	\big(
	2^{k(s+\varepsilon)}
	|\varphi_k*f|
	\big)
	\Big\|_{\dot{E}_{\vec{p}}^{\vec{\alpha},\vec{q}}(\mathbb{R}^n)}.
	\]
	The desired estimate then follows from the embedding
	$\ell^{\beta_1}\hookrightarrow\ell^{\infty}$.
	The proof for the $B$-spaces is analogous.
	
	Finally, the embedding \eqref{embed4} follows immediately from
	H\"{o}lder's inequality.
\end{proof}

The proof of the following theorem is very similar to that for
Herz-type Besov and Triebel--Lizorkin spaces; see \cite{Drihem1.13}.

\begin{theorem}
	\label{embeddings2}
	Let $s\in\mathbb{R}$, $0<\vec{p},\vec{q}\leq\infty$,
	$\vec{\alpha}=(\alpha_1,\ldots,\alpha_n)\in\mathbb{R}^{n}$,
	$0<\beta\leq\infty$, and
	$\alpha_i>-\frac1{p_i}$ for all $i\in\{1,\ldots,n\}$.
	Then
	\[
	\dot{E}_{\vec{p}}^{\vec{\alpha},\vec{q}}
	B_{\min(\beta,p_-,q_-)}^{s}
	\hookrightarrow
	\dot{E}_{\vec{p}}^{\vec{\alpha},\vec{q}}
	F_{\beta}^{s}
	\hookrightarrow
	\dot{E}_{\vec{p}}^{\vec{\alpha},\vec{q}}
	B_{\max(\beta,p_-,q_-)}^{s}.
	\]
\end{theorem}

\begin{remark}
	When $\vec{\alpha}=\vec{0}$ and $\vec{p}=\vec{q}$,
	Theorem~\ref{embeddings2} reduces to the classical embedding theorem for
	mixed-norm Besov and Triebel--Lizorkin spaces.
\end{remark}

\begin{theorem}
	\label{embeddings-S-inf}
	Let $s\in \mathbb{R}$, $0<\vec{p},\vec{q}\leq \infty$,
	$\vec{\alpha}=(\alpha_{1},\ldots,\alpha_{n})\in \mathbb{R}^{n}$,
	$0<\beta\leq\infty$, and
	$\alpha_i>-\frac1{p_i}$ for all $i\in\{1,\ldots,n\}$.
	
	\smallskip
	
	\noindent
	$\mathrm{(i)}$ The embedding
	\begin{equation}
	\mathcal{S}(\mathbb{R}^{n})
	\hookrightarrow
	\dot{E}_{\vec{p}}^{\vec{\alpha},\vec{q}}A_{\beta}^{s}
	\label{embedding}
	\end{equation}
	holds. Moreover, if $0<\vec{p},\vec{q}<\infty$ and
	$0<\beta<\infty$, then $\mathcal{S}(\mathbb{R}^{n})$ is dense in
	$\dot{E}_{\vec{p}}^{\vec{\alpha},\vec{q}}A_{\beta}^{s}$.
	
	\smallskip
	
	\noindent
	$\mathrm{(ii)}$ The embedding
	\begin{equation}
	\dot{E}_{\vec{p}}^{\vec{\alpha},\vec{q}}A_{\beta}^{s}
	\hookrightarrow
	\mathcal{S}'(\mathbb{R}^{n})
	\label{embeddingsSch}
	\end{equation}
	holds.
\end{theorem}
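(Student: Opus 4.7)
The plan is to dispatch part $\mathrm{(ii)}$ immediately from the $\varphi$-transform machinery already in place, and then to prove part $\mathrm{(i)}$ in two stages: a norm estimate, which I reduce to the finiteness of the mixed-norm Herz norm of a polynomial decay envelope, and a density argument via truncation on the sequence side.

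For $\mathrm{(ii)}$, Theorem \ref{phi-tran} provides the identity $f=T_\psi S_\varphi f$ in $\mathcal{S}'(\mathbb{R}^n)$ together with the bound $\|S_\varphi f\|_{\dot{E}_{\vec{p}}^{\vec{\alpha},\vec{q}}a_\beta^s}\lesssim \|f\|_{\dot{E}_{\vec{p}}^{\vec{\alpha},\vec{q}}A_\beta^s}$, while Lemma \ref{Inv-phi-trans} already established the continuity of $T_\psi:\dot{E}_{\vec{p}}^{\vec{\alpha},\vec{q}}a_\beta^s\to \mathcal{S}'(\mathbb{R}^n)$: for every $\varphi\in\mathcal{S}(\mathbb{R}^n)$ there exist $M\in\mathbb{N}$ and $c>0$ with $|\langle T_\psi\lambda,\varphi\rangle|\le c\|\varphi\|_{\mathcal{S}_M}\|\lambda\|_{\dot{E}_{\vec{p}}^{\vec{\alpha},\vec{q}}a_\beta^s}$. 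Composing the two bounds yields the continuous embedding \eqref{embeddingsSch}.

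For the embedding \eqref{embedding} in $\mathrm{(i)}$, I aim at an estimate $\|f\|_{\dot{E}_{\vec{p}}^{\vec{\alpha},\vec{q}}A_\beta^s}\le c\|f\|_{\mathcal{S}_{M'}}$ for every $f\in\mathcal{S}(\mathbb{R}^n)$. Because $\mathcal{F}\varphi$ vanishes near the origin, $\varphi$ has vanishing moments of all orders, so the same argument that produced \eqref{convolution} in the proof of Lemma \ref{Inv-phi-trans} gives, for any $L,N\in\mathbb{N}$, every $k\in\mathbb{N}$ and every $x\in\mathbb{R}^n$,
\begin{equation*}
|\varphi_k\ast f(x)|\le c\,2^{-kL}\|f\|_{\mathcal{S}_{N+1}}\|\varphi\|_{\mathcal{S}_{N+1}}(1+|x|)^{-n-N},
\end{equation*}
while a direct Schwartz bound handles $k=0$. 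It then suffices to verify that the envelope $g_N(x)=(1+|x|)^{-n-N}$ lies in $\dot{E}_{\vec{p}}^{\vec{\alpha},\vec{q}}(\mathbb{R}^n)$ for $N$ sufficiently large. The inequality $1+|x|\ge 1+|x_i|$ yields the product bound $g_N(x)\le \prod_{i=1}^n (1+|x_i|)^{-(n+N)/n}$, and since the iterated Herz norm tensorises on such products (and is monotone), the task reduces to the one-variable estimate
\begin{equation*}
\sum_{k_i\in\mathbb{Z}} 2^{k_i\alpha_i q_i}\big\|(1+|\cdot|)^{-(n+N)/n}\chi_{R_{k_i}}\big\|_{p_i}^{q_i}<\infty,
\end{equation*}
whose negative-$k_i$ tail behaves like $\sum_{k_i\le 0}2^{k_i q_i(\alpha_i+1/p_i)}$ and converges exactly because $\alpha_i+1/p_i>0$, while its positive-$k_i$ tail behaves like $\sum_{k_i\ge 1}2^{k_i q_i(\alpha_i+1/p_i-(n+N)/n)}$ and converges for $N$ large. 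Choosing $L$ so that $\sum_{k\in\mathbb{N}_0}2^{k(s-L)\beta}<\infty$ and summing in $k$ gives the Besov estimate; the Triebel--Lizorkin estimate follows the same way by taking the $\ell^\beta$-norm in $k$ inside before applying the outer norm (or, alternatively, by Theorem \ref{embeddings2}).

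Finally, for the density claim when all parameters are finite, set $\lambda=S_\varphi f\in\dot{E}_{\vec{p}}^{\vec{\alpha},\vec{q}}a_\beta^s$ and consider the truncations $\lambda^{(K)}=\{\lambda_{k,m}\}_{k\le K,\,|m|\le K}$ (zero elsewhere). Since $\vec{p},\vec{q},\beta<\infty$, dominated convergence gives $\|\lambda-\lambda^{(K)}\|_{\dot{E}_{\vec{p}}^{\vec{\alpha},\vec{q}}a_\beta^s}\to 0$, and the boundedness of $T_\psi$ from Theorem \ref{phi-tran} then forces $T_\psi\lambda^{(K)}\to f=T_\psi\lambda$ in the quasi-norm of $\dot{E}_{\vec{p}}^{\vec{\alpha},\vec{q}}A_\beta^s$; each $T_\psi\lambda^{(K)}$ is a finite linear combination of $\Psi_m$ and $\psi_{k,m}$ and is therefore Schwartz. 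The main technical obstacle is the low-frequency part of the one-variable Herz norm of the envelope, which is precisely what forces the standing hypothesis $\alpha_i>-1/p_i$; once this ingredient is secured, the rest is bookkeeping against Theorem \ref{phi-tran} and Lemma \ref{Inv-phi-trans}.
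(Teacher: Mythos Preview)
Your proof is correct, but the route you take for part $\mathrm{(ii)}$ differs from the paper's. The paper argues directly: starting from the Littlewood--Paley expansion $f(\psi)=\sum_j(\mathcal{F}^{-1}\varphi_j\ast f)(\mathcal{F}^{-1}\omega_j\ast\psi)$, it rewrites the $L^1$-pairing as a $\dot K_1^{0,1}$-norm, splits the dyadic shell index $k$ into $k\le -j-1$ and $k\ge -j$, and controls $\sup_{x\in B(0,2^k)}|\mathcal{F}^{-1}\varphi_j\ast f(x)|$ in each range via Lemma~\ref{Key-est1}; the outcome is a bound by $\|f\|_{\dot E_{\vec p}^{\vec\alpha,\vec q}B_\infty^s}$ times a Schwartz seminorm of $\psi$, after recognising the $\psi$-sums as Besov and Herz--Besov norms. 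Your argument is shorter and more structural: since Lemma~\ref{Inv-phi-trans} already shows $T_\psi:\dot E_{\vec p}^{\vec\alpha,\vec q}a_\beta^s\to\mathcal S'$ is continuous and Theorem~\ref{phi-tran} gives $T_\psi S_\varphi=\mathrm{id}$, composition yields the embedding without reopening any analytic estimates. What you gain is economy; what the paper's approach gains is independence from Theorem~\ref{phi-tran}, so it could in principle be placed earlier in the exposition. For part $\mathrm{(i)}$ the paper simply defers to \cite{Dr-AOT23} and \cite{YY1}; your explicit argument via the envelope $(1+|x|)^{-n-N}$, the product domination $\prod_i(1+|x_i|)^{-(n+N)/n}$, and the tensorisation of the iterated Herz norm is a clean self-contained substitute (with the obvious sup-modification when some $q_i=\infty$). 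The density via sequence-space truncation and $T_\psi$ is the standard mechanism and matches what the references do.
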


\begin{proof}
	The proof of the embedding \eqref{embedding} and the density of
	$\mathcal{S}(\mathbb{R}^{n})$ in
	$\dot{E}_{\vec{p}}^{\vec{\alpha},\vec{q}}A_{\beta}^{s}$
	follows from the same arguments as in
	\cite{Dr-AOT23,YY1}. Therefore, it remains to prove that
	\[
	\dot{E}_{\vec{p}}^{\vec{\alpha},\vec{q}}B_{\infty}^{s}
	\hookrightarrow
	\mathcal{S}'(\mathbb{R}^{n}).
	\]
	
	Let $\{\varphi_j\}_{j\in\mathbb{N}_0}$ be a smooth dyadic resolution of
	unity. Define
	\[
	\omega_j=\sum_{i=j-1}^{j+1}\varphi_i,
	\qquad j\in\mathbb{N},
	\]
	where $\varphi_{-1}=0$.
	
	Let $f\in \dot{E}_{\vec{p}}^{\vec{\alpha},\vec{q}}B_{\infty}^{s}$ and
	$\psi\in\mathcal{S}(\mathbb{R}^{n})$. Then
	\begin{align*}
	|f(\psi)|
	&\leq
	\sum_{j=0}^{\infty}
	\left|
	(\mathcal{F}^{-1}\varphi_j*f)
	(\mathcal{F}^{-1}\omega_j*\psi)
	\right| \\
	&=
	\sum_{j=0}^{\infty}
	\big\|
	(\mathcal{F}^{-1}\varphi_j*f)
	(\mathcal{F}^{-1}\omega_j*\psi)
	\big\|_{1} \\
	&=
	\sum_{j=0}^{\infty}
	\big\|
	(\mathcal{F}^{-1}\varphi_j*f)
	(\mathcal{F}^{-1}\omega_j*\psi)
	\big\|_{\dot K_{1}^{0,1}(\mathbb{R}%
		^{n})}.
	\end{align*}
	
	Recalling the definition of the spaces $\dot K_{1}^{0,1}$, we obtain
	\begin{align*}
	&\sum_{j=0}^{\infty}
	\sum_{k=-\infty}^{\infty}
	\big\|
	(\mathcal{F}^{-1}\varphi_j*f)
	(\mathcal{F}^{-1}\omega_j*\psi)
	\chi_k
	\big\|_{1} \\
	&\leq
	\sum_{j=0}^{\infty}
	\sum_{k=-\infty}^{\infty}
	\sup_{x\in B(0,2^k)}
	\big|
	\mathcal{F}^{-1}\varphi_j*f(x)
	\big|
	\,
	\big\|
	(\mathcal{F}^{-1}\omega_j*\psi)\chi_k
	\big\|_{1}.
	\end{align*}
	
	We split the latter sum into two parts:
	\[
	I_1=
	\sum_{j=0}^{\infty}
	\sum_{k=-\infty}^{-j-1}
	\sup_{x\in B(0,2^k)}
	\big|
	\mathcal{F}^{-1}\varphi_j*f(x)
	\big|
	\,
	\big\|
	(\mathcal{F}^{-1}\omega_j*\psi)\chi_k
	\big\|_{1},
	\]
	and
	\[
	I_2=
	\sum_{j=0}^{\infty}
	\sum_{k=-j}^{\infty}
	\sup_{x\in B(0,2^k)}
	\big|
	\mathcal{F}^{-1}\varphi_j*f(x)
	\big|
	\,
	\big\|
	(\mathcal{F}^{-1}\omega_j*\psi)\chi_k
	\big\|_{1}.
	\]
	
	Let  $0<d<\min_{1\le i\le n} \digamma _{i}( p_{i},\alpha_{i})$. 
	By Lemma~\ref{Key-est1},
	\begin{align*}
	I_1
	&\leq
	\sum_{j=0}^{\infty}
	\sum_{k=-\infty}^{-j-1}
	2^{(\frac1{\mathbf p}+\boldsymbol{\alpha})j}
	\|
	\mathcal{F}^{-1}\varphi_j*f
	\|_{\dot E_{\vec p}^{\vec\alpha,\vec q}}
	\,
	\|
	(\mathcal{F}^{-1}\omega_j*\psi)\chi_k
	\|_1 \\
	&\leq
	c
	\|f\|_{\dot E_{\vec p}^{\vec\alpha,\vec q}B_\infty^s}
	\sum_{j=0}^{\infty}
	\sum_{k=-\infty}^{-j-1}
	2^{(\frac1{\mathbf p}+\boldsymbol{\alpha}-s)j}
	\|
	(\mathcal{F}^{-1}\omega_j*\psi)\chi_k
	\|_1 \\
	&\leq
	c
	\|f\|_{\dot E_{\vec p}^{\vec\alpha,\vec q}B_\infty^s}
	\,
	\|\psi\|_{B_{1,1}^{\frac1{\mathbf p}+\boldsymbol{\alpha}-s}}.
	\end{align*}
	
	Applying Lemma~\ref{Key-est1} once more, we obtain
	\begin{align*}
	I_2
	&\leq
	\sum_{j=0}^{\infty}
	\sum_{k=-j}^{\infty}
	2^{j\frac nd}
	2^{(\frac nd-\frac1{\mathbf p}-\boldsymbol{\alpha})k}
	\|
	\mathcal{F}^{-1}\varphi_j*f
	\|_{\dot K_p^{\alpha,q}}
	\,
	\|
	(\mathcal{F}^{-1}\omega_j*\psi)\chi_k
	\|_1 \\
	&\leq
	c
	\|f\|_{\dot E_{\vec p}^{\vec\alpha,\vec q}B_\infty^s}
	\sum_{j=0}^{\infty}
	2^{(\frac nd-s)j}
	\sum_{k=-j}^{\infty}
	2^{(\frac nd-\frac1{\mathbf p}-\boldsymbol{\alpha})k}
	\|
	(\mathcal{F}^{-1}\omega_j*\psi)\chi_k
	\|_1 \\
	&\leq
	c
	\|f\|_{\dot E_{\vec p}^{\vec\alpha,\vec q}B_\infty^s}
	\,
	\|\psi\|_{\dot K_1^{\frac nd-\frac1{\mathbf p}-\boldsymbol{\alpha},1}
		B_1^{\frac nd-s}}.
	\end{align*}
	
	Consequently,
	\[
	|f(\psi)|
	\leq
	c
	\|f\|_{\dot E_{\vec p}^{\vec\alpha,\vec q}B_\infty^s}
	\max\Big(
	\|\psi\|_{B_{1,1}^{\frac1{\mathbf p}+\boldsymbol{\alpha}-s}},
	\,
	\|\psi\|_{\dot K_1^{\frac nd-\frac1{\mathbf p}-\boldsymbol{\alpha},1}
		B_1^{\frac nd-s}}
	\Big).
	\]
	
	By the choice of $d$,
	\[
	\mathcal{S}(\mathbb{R}^{n})
	\hookrightarrow
	\dot K_1^{\frac nd-\frac1{\mathbf p}-\boldsymbol{\alpha},1}
	B_1^{\frac nd-s}.
	\]
	Together with the embedding
	\[
	\mathcal{S}(\mathbb{R}^{n})
	\hookrightarrow
	B_{1,1}^{\frac1{\mathbf p}+\boldsymbol{\alpha}-s},
	\]
	this yields
	\[
	|f(\psi)|
	\leq
	c\,
	\|\psi\|_{\mathcal S_M}
	\,
	\|f\|_{\dot E_{\vec p}^{\vec\alpha,\vec q}B_\infty^s}
	\]
	for some $M\in\mathbb N$.
	
	Hence,
	\[
	\dot E_{\vec p}^{\vec\alpha,\vec q}B_\infty^s
	\hookrightarrow
	\mathcal S'(\mathbb R^n)
	\]
	continuously.
	
	Finally, the embedding \eqref{embeddingsSch} follows from
	\[
	\dot E_{\vec p}^{\vec\alpha,\vec q}F_\beta^s
	\hookrightarrow
	\dot K_p^{\alpha,q}B_\infty^s.
	\]
	This completes the proof.
\end{proof}

\section{Sobolev embeddings}

In this section, we establish Sobolev-type embeddings for the spaces
$\dot{E}_{\vec p}^{\vec\alpha,\vec q}A_\beta^s$.

\subsection{Sobolev embeddings for the spaces $\dot{E}_{\vec{p}}^{\vec{%
\protect\alpha},\vec{q}}B_{\protect\beta }^{s}$}
We next consider Sobolev-type embeddings for mixed-norm Herz--Besov spaces.
It is well known that
\begin{equation}
B_{q,\beta}^{s_{2}}\hookrightarrow B_{p,\beta}^{s_{1}},
\label{Sobolev-emb}
\end{equation}
provided that
\[
s_{1}-\frac{n}{p}=s_{2}-\frac{n}{q},
\]
where $0<q<p\leq \infty$, $s_{1}\leq s_{2}$ and $0<\beta\leq \infty$
(see e.g. \cite[Theorem 2.7.1]{T83}). The following theorem extends
this classical embedding to the scale of mixed-norm Herz--Besov spaces.

\begin{theorem}
	\label{embeddings3}
	Let $s_{1},s_{2}\in\mathbb{R}$,
	$0<\vec{q}\leq \vec{p}\leq \infty$,
	$0<\vec{r},\vec{\nu}\leq \infty$,
	$\vec{\alpha}_{1}=(\alpha_{1}^{1},\ldots,\alpha_{n}^{1})\in\mathbb{R}^{n}$,
	$\vec{\alpha}_{2}=(\alpha_{1}^{2},\ldots,\alpha_{n}^{2})\in\mathbb{R}^{n}$,
	and $0<\beta\leq\infty$. Assume that
	\[
	-\frac{1}{p_{i}}<\alpha_{i}^{1}\leq \alpha_{i}^{2},
	\qquad i\in \{1,\dots,n\},
	\]
	and
	\begin{equation}
	s_{1}-\frac{1}{\mathbf{p}}-\boldsymbol{\alpha}_{1}
	\leq
	s_{2}-\frac{1}{\mathbf{q}}-\boldsymbol{\alpha}_{2}.
	\label{newexp1}
	\end{equation}
	Then
	\begin{equation}
	\dot{E}_{\vec{q}}^{\vec{\alpha}_{2},\vec{\theta}}B_{\beta}^{s_{2}}
	\hookrightarrow
	\dot{E}_{\vec{p}}^{\vec{\alpha}_{1},\vec{r}}B_{\beta}^{s_{1}},
	\label{Sobolev-emb1}
	\end{equation}
	where
	\[
	\vec{\theta}=
	\begin{cases}
	\vec{r}, & \text{if } \vec{\alpha}_{2}=\vec{\alpha}_{1},\\[1mm]
	\vec{\nu}, & \text{if } \vec{\alpha}_{2}>\vec{\alpha}_{1}.
	\end{cases}
	\]
	Moreover, condition \eqref{newexp1} is necessary for the embedding
	\eqref{Sobolev-emb1} to hold.
\end{theorem}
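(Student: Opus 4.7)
The plan is to derive the embedding \eqref{Sobolev-emb1} from the mixed-norm Plancherel-Polya-Nikolskij inequality of Lemma \ref{Plancherel-Polya-Nikolskij} applied to each Littlewood-Paley piece, and then to establish the necessity of \eqref{newexp1} via a scaling test against a fixed Schwartz function whose spectrum lies in a single dyadic annulus.

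For the sufficiency, let $\{\varphi_{k}\}_{k\in\mathbb{N}_{0}}$ be as in Definition \ref{B-F-def}. For $k\geq 1$ the distribution $\varphi_{k}\ast f$ has Fourier transform supported in $\{|\xi|\leq 2^{k+1}\}$, and $\mathcal{F}(\Phi\ast f)$ is supported in $\{|\xi|\leq 2\}$. I would apply Lemma \ref{Plancherel-Polya-Nikolskij} to each such block, identifying the source exponents $(\vec{p},\vec{\alpha}_2,\vec{\theta})$ of that lemma with our source exponents $(\vec{q},\vec{\alpha}_2,\vec{\theta})$ and its target exponents $(\vec{s},\vec{\alpha}_1,\vec{r})$ with our target exponents $(\vec{p},\vec{\alpha}_1,\vec{r})$; the required hypotheses $\alpha_i^1+\tfrac{1}{p_i}>0$, $q_i\leq p_i$ and $\alpha_i^2\geq\alpha_i^1$ are exactly those of the theorem. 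With $R\sim 2^{k}$ this yields
\begin{equation*}
\|\varphi_k\ast f\|_{\dot{E}_{\vec{p}}^{\vec{\alpha}_1,\vec{r}}}
\lesssim 2^{k(\frac{1}{\mathbf{q}}-\frac{1}{\mathbf{p}}+\mathbf{\alpha}_2-\mathbf{\alpha}_1)}\,\|\varphi_k\ast f\|_{\dot{E}_{\vec{q}}^{\vec{\alpha}_2,\vec{\theta}}}.
\end{equation*}
Multiplying by $2^{ks_1}$ and invoking \eqref{newexp1} in the rearranged form $s_{1}+\tfrac{1}{\mathbf{q}}-\tfrac{1}{\mathbf{p}}+\mathbf{\alpha}_{2}-\mathbf{\alpha}_{1}\leq s_{2}$, the right-hand side is dominated by $2^{ks_{2}}\|\varphi_{k}\ast f\|_{\dot{E}_{\vec{q}}^{\vec{\alpha}_{2},\vec{\theta}}}$. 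Taking the $\ell^{\beta}$-norm in $k\in\mathbb{N}_{0}$ yields \eqref{Sobolev-emb1}; the $k=0$ term, where $\Phi$ replaces $\varphi_0$, causes no trouble since the Fourier support is then bounded and the PPN inequality gives a harmless $k$-independent constant.

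For the necessity, I would fix a nonzero $\phi\in\mathcal{S}(\mathbb{R}^{n})$ with $\mathcal{F}\phi$ supported in $\{\tfrac{1}{2}\leq|\xi|\leq 2\}$ and take $f_{j}(x)=\phi(2^{j}x)$ for $j\geq 1$. Since $\mathrm{supp}\,\mathcal{F}f_{j}\subset\{2^{j-1}\leq|\xi|\leq 2^{j+1}\}$, only the Littlewood-Paley blocks with $k$ close to $j$ contribute, giving $\|f_{j}\|_{\dot{E}_{\vec{p}}^{\vec{\alpha},\vec{q}}B_{\beta}^{s}}\approx 2^{js}\|f_{j}\|_{\dot{E}_{\vec{p}}^{\vec{\alpha},\vec{q}}}$. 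An iterated change of variables on each one-dimensional dyadic shell $R_{k_{i}}$ in the definition of the mixed-norm Herz space produces the scaling
\begin{equation*}
\|f_{j}\|_{\dot{E}_{\vec{p}}^{\vec{\alpha},\vec{q}}}
=2^{-j(\frac{1}{\mathbf{p}}+\mathbf{\alpha})}\,\|\phi\|_{\dot{E}_{\vec{p}}^{\vec{\alpha},\vec{q}}}.
\end{equation*}
Assuming the embedding, combining these two identities in both the source and target spaces gives
\begin{equation*}
2^{j(s_{1}-\frac{1}{\mathbf{p}}-\mathbf{\alpha}_{1})}\;\lesssim\;2^{j(s_{2}-\frac{1}{\mathbf{q}}-\mathbf{\alpha}_{2})}
\end{equation*}
uniformly in $j\geq 1$, which upon letting $j\to\infty$ forces \eqref{newexp1}.

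The sufficiency amounts to a direct application of Lemma \ref{Plancherel-Polya-Nikolskij} and presents no real difficulty. The main obstacle, and the one place where the Herz structure has to be unpacked carefully, is the coordinate-by-coordinate scaling computation for $\|\cdot\|_{\dot{E}_{\vec{p}}^{\vec{\alpha},\vec{q}}}$ used in the necessity argument: one must verify that dilation by $2^{j}$ reindexes each annulus $R_{k_{i}}\mapsto R_{k_{i}+j}$ and combines with the $L^{p_{i}}$ Jacobian and the weight $2^{k_{i}\alpha_{i}q_{i}}$ to produce exactly the exponent $-(\tfrac{1}{\mathbf{p}}+\mathbf{\alpha})$. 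Once this routine bookkeeping is in place, the limit $j\to\infty$ delivers the sharp condition.
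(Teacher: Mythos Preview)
Your proposal is correct and follows essentially the same approach as the paper: sufficiency by applying Lemma \ref{Plancherel-Polya-Nikolskij} to each Littlewood--Paley block and then taking the $\ell^{\beta}$-norm, and necessity via a dilated Schwartz test function with spectrum in a single dyadic annulus together with the exact coordinate-wise Herz scaling identity. The only cosmetic difference is that the paper chooses the spectral support of the test function inside $\{3/4<|\xi|<1\}$ so that exactly one Littlewood--Paley term survives, whereas you allow a few neighboring terms; this does not affect the argument.
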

\begin{proof}
	Let $\{\varphi_j\}_{j\in \mathbb{N}_0}$ be a smooth dyadic resolution of
	unity and let
	$f\in \dot{E}_{\vec{q}}^{\vec{\alpha}_2,\vec{\theta}}B_{\beta}^{s_2}$.
	By Lemma \ref{Plancherel-Polya-Nikolskij}, we obtain
	\[
	\bigl\|\mathcal{F}^{-1}\varphi_j * f\bigr\|_
	{\dot{E}_{\vec{p}}^{\vec{\alpha}_1,\vec{r}}(\mathbb{R}^n)}
	\le c\,2^{j\left(
		\frac{1}{\mathbf q}-\frac{1}{\mathbf p}
		-\boldsymbol{\alpha}_1+\boldsymbol{\alpha}_2
		\right)}
	\bigl\|\mathcal{F}^{-1}\varphi_j * f\bigr\|_
	{\dot{E}_{\vec{q}}^{\vec{\alpha}_2,\vec{\theta}}(\mathbb{R}^n)},
	\]
	where $c>0$ is independent of $j\in\mathbb N_0$. The desired embedding
	follows immediately from this estimate and condition \eqref{newexp1}.
	
	We now prove the necessity of \eqref{newexp1}. Let
	$\omega\in\mathcal{S}(\mathbb{R}^n)$ satisfy
	\[
	\operatorname{supp}\mathcal{F}{\omega}
	\subset
	\left\{\xi\in\mathbb{R}^n:\frac34<|\xi|<1\right\}.
	\]
	For $N\in\mathbb N$, define
	\[
	f_N(x)=\omega(2^N x), \qquad x\in\mathbb{R}^n.
	\]
	Clearly,
	\[
	\omega\in
	\dot{E}_{\vec{q}}^{\vec{\alpha}_2,\vec{\theta}}(\mathbb{R}^n)
	\cap
	\dot{E}_{\vec{p}}^{\vec{\alpha}_1,\vec{r}}(\mathbb{R}^n).
	\]
	By the support properties of $\omega$, we have
	\[
	\mathcal{F}^{-1}\varphi_j * f_N=
	\begin{cases}
	f_N, & j=N,\\
	0, & j\neq N.
	\end{cases}
	\]
	
	Let $i\in\{1,\ldots,n\}$. Then
	\begin{align*}
	\|f_N\|_{\dot K_{p_i}^{\alpha_i^1,r_i}}
	&=
	\left(
	\sum_{k_i=-\infty}^{\infty}
	2^{k_i\alpha_i^1 r_i}
	\|f_N\chi_{k_i}\|_{L^{{p_{i}}}(\mathbb R)}^{r_i}
	\right)^{1/r_i}
	\\
	&=
	2^{-N/p_i}
	\left(
	\sum_{k_i=-\infty}^{\infty}
	2^{k_i\alpha_i^1 r_i}
	\|\widetilde{\omega}(2^N\cdot)\chi_{k_i+N}\|_{L^{{p_{i}}}(\mathbb R)}^{r_i}
	\right)^{1/r_i}
	\\
	&=
	2^{-(\alpha_i^1+\frac1{p_i})N}
	\|\widetilde{\omega}(2^N\cdot)\|_{\dot K_{p_i}^{\alpha_i^1,r_i}},
	\end{align*}
	where
	\[
	\tilde{\omega}(2^{N}x)=\omega
	(2^{N}x_{1}...,2^{N}x_{i-1},x_{i},2^{N}x_{i+1},...,2^{N}x_{n}).
	\]
	
	Consequently,
	\[
	\|f_N\|_{\dot{E}_{\vec{p}}^{\vec{\alpha}_1,\vec r}B_\beta^{s_1}}
	=
	2^{s_1N}
	\|f_N\|_{\dot{E}_{\vec{p}}^{\vec{\alpha}_1,\vec r}(\mathbb R^n)}
	=
	2^{\left(
		s_1-\boldsymbol{\alpha}_1-\frac1{\mathbf p}
		\right)N}
	\|\omega\|_
	{\dot{E}_{\vec{p}}^{\vec{\alpha}_1,\vec r}(\mathbb R^n)}.
	\]
	
	Similarly,
	\[
	\|f_N\|_{\dot{E}_{\vec{q}}^{\vec{\alpha}_2,\vec{\theta}}B_\beta^{s_2}}
	=
	2^{\left(
		s_2-\boldsymbol{\alpha}_2-\frac1{\mathbf q}
		\right)N}
	\|\omega\|_
	{\dot{E}_{\vec{q}}^{\vec{\alpha}_2,\vec{\theta}}(\mathbb R^n)}.
	\]
	
	Assume that the embedding \eqref{Sobolev-emb1} holds. Then, for every
	$N\in\mathbb N$,
	\[
	\|f_N\|_{\dot{E}_{\vec{p}}^{\vec{\alpha}_1,\vec r}B_\beta^{s_1}}
	\le
	C\,
	\|f_N\|_{\dot{E}_{\vec{q}}^{\vec{\alpha}_2,\vec{\theta}}B_\beta^{s_2}},
	\]
	which yields
	\[
	2^{\left(
		s_1-s_2
		-\boldsymbol{\alpha}_1+\boldsymbol{\alpha}_2
		-\frac1{\mathbf p}
		+\frac1{\mathbf q}
		\right)N}
	\le C.
	\]
	Letting $N\to\infty$, we conclude that
	\[
	s_1-\boldsymbol{\alpha}_1-\frac1{\mathbf p}
	\le
	s_2-\boldsymbol{\alpha}_2-\frac1{\mathbf q},
	\]
	which is precisely condition \eqref{newexp1}. This completes the proof.
\end{proof}
\begin{remark}
	If $\boldsymbol{\alpha}_{1}=\boldsymbol{\alpha}_{2}=0$ and $\vec{r}=\vec{p}$, then
	Theorem \ref{embeddings3} reduces to the classical embedding results for
	$B_{\vec{p},\beta}^{s}$; see \cite{JS07}. This follows from the embedding
	$\ell^{q_i}\hookrightarrow \ell^{p_i}$ for $i\in\{1,\ldots,n\}$.
\end{remark}

Combining Theorem \ref{embeddings3} with the identity
\[
\dot{E}_{\vec{p}}^{0,\vec{p}}B_{\beta}^{s_{1}}
= B_{\vec{p},\beta}^{s_{1}},
\]
we immediately obtain the following result.

\begin{theorem}
	\label{embeddings4}
	Let $\vec{\alpha}=(\alpha_{1},\ldots,\alpha_{n})\in\mathbb{R}^{n}$,
	$s_{1},s_{2}\in\mathbb{R}$, $0<\vec{r}\leq\infty$,
	$0<\vec{q}\leq\vec{p}\leq\infty$, and $0<\beta\leq\infty$. Assume that
	\[
	s_{1}-\frac{1}{\mathbf p}
	\leq
	s_{2}-\frac{1}{\mathbf q}-\boldsymbol{\alpha}.
	\]
	If
	\[
	\alpha_i\geq0,
	\qquad i\in\{1,\ldots,n\},
	\]
	then
	\[
	\dot{E}_{\vec{q}}^{\vec{\alpha},\vec{\theta}}B_{\beta}^{s_{2}}
	\hookrightarrow
	B_{\vec{p},\beta}^{s_{1}},
	\]
	where
	\begin{equation}
	\vec{\theta}
	=
	\begin{cases}
	\vec{p}, & \text{if }\vec{\alpha}=0,\\
	\vec{r}, & \text{if }\alpha_i>0,\ i\in\{1,\ldots,n\}.
	\end{cases}
	\label{aux10}
	\end{equation}
\end{theorem}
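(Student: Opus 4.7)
The plan is to derive Theorem \ref{embeddings4} as an immediate corollary of Theorem \ref{embeddings3}, exploiting the identification $\dot{E}_{\vec{p}}^{\vec{0},\vec{p}}(\mathbb{R}^{n})=L^{\vec{p}}(\mathbb{R}^{n})$ highlighted in the remark following the definition of mixed-norm Herz spaces; at the Besov level this reads $\dot{E}_{\vec{p}}^{\vec{0},\vec{p}}B_{\beta}^{s_{1}}=B_{\vec{p},\beta}^{s_{1}}$. Concretely, I would apply Theorem \ref{embeddings3} with the parameter choice $\vec{\alpha}_{1}=\vec{0}$ and $\vec{\alpha}_{2}=\vec{\alpha}$. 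The hypothesis $\alpha_{i}\geq 0$ assumed in Theorem \ref{embeddings4} immediately yields $-\frac{1}{p_{i}}<0\leq \alpha_{i}$, which is exactly the structural condition $-\frac{1}{p_{i}}<\alpha_{i}^{1}\leq \alpha_{i}^{2}$ required by Theorem \ref{embeddings3}, while the scaling inequality of Theorem \ref{embeddings3} specialises precisely to $s_{1}-\frac{1}{\mathbf{p}}\leq s_{2}-\frac{1}{\mathbf{q}}-\mathbf{\alpha}$, matching the assumption of Theorem \ref{embeddings4}.

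Invoking Theorem \ref{embeddings3} under these choices produces
\[
\dot{E}_{\vec{q}}^{\vec{\alpha},\vec{\theta}}B_{\beta}^{s_{2}}\hookrightarrow \dot{E}_{\vec{p}}^{\vec{0},\vec{r}_{\ast}}B_{\beta}^{s_{1}},
\]
where $\vec{r}_{\ast}$ is the free second parameter of the target Herz space in Theorem \ref{embeddings3}. Specialising $\vec{r}_{\ast}=\vec{p}$ collapses the right-hand side to $\dot{E}_{\vec{p}}^{\vec{0},\vec{p}}B_{\beta}^{s_{1}}=B_{\vec{p},\beta}^{s_{1}}$ by the identification recalled above, which is exactly the target of Theorem \ref{embeddings4}.

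It then only remains to verify that the selector for $\vec{\theta}$ in \eqref{aux10} matches the one dictated by Theorem \ref{embeddings3}. When $\vec{\alpha}=\vec{0}$ we are in the regime $\vec{\alpha}_{2}=\vec{\alpha}_{1}$, so Theorem \ref{embeddings3} enforces $\vec{\theta}=\vec{r}_{\ast}=\vec{p}$, which is the first branch of \eqref{aux10}. When $\alpha_{i}>0$ for every $i$ we are in the regime $\vec{\alpha}_{2}>\vec{\alpha}_{1}$, so the rule in Theorem \ref{embeddings3} prescribes $\vec{\theta}=\vec{\nu}$ with $\vec{\nu}\in (0,\infty]^{n}$ a free parameter; taking $\vec{\nu}=\vec{r}$ recovers the second branch of \eqref{aux10} for arbitrary $\vec{r}$.

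There is no substantive obstacle in this argument beyond the careful bookkeeping of indices: the analytic content, namely the Plancherel--Polya--Nikolskij inequality of Lemma \ref{Plancherel-Polya-Nikolskij}, has already been absorbed in the proof of Theorem \ref{embeddings3}, and the flexibility in the micro-parameters $\vec{r}$ and $\vec{\nu}$ there is precisely what permits both branches of \eqref{aux10} to be realised from a single master embedding. Thus the theorem reduces to the two parameter specialisations above.
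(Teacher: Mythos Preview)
Your proposal is correct and follows essentially the same route as the paper: the paper states just before Theorem \ref{embeddings4} that it follows immediately from Theorem \ref{embeddings3} together with the identification $\dot{E}_{\vec{p}}^{0,\vec{p}}B_{\beta}^{s_{1}}=B_{\vec{p},\beta}^{s_{1}}$, and your argument is precisely the detailed unpacking of that sentence. The bookkeeping you supply for the two branches of $\vec{\theta}$ is exactly what is needed.
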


As an immediate consequence, we obtain the following corollary.

\begin{corollary}
	Let $s_{1},s_{2}\in\mathbb{R}$, $0<\vec{q}\leq\vec{p}\leq\infty$,
	$0<\beta\leq\infty$, and
	\[
	s_{1}-\frac{1}{\mathbf p}
	\leq
	s_{2}-\frac{1}{\mathbf q}.
	\]
	Then
	\[
	B_{\vec{q},\beta}^{s_{2}}
	\hookrightarrow
	\dot{E}_{\vec{q}}^{0,\vec{p}}B_{\beta}^{s_{2}}
	\hookrightarrow
	B_{\vec{p},\beta}^{s_{1}}.
	\]
\end{corollary}

\begin{proof}
	It is sufficient to choose $\vec{\theta}=\vec{p}$ and
	$\vec{\alpha}=0$ in Theorem \ref{embeddings4}. Since
	\[
	B_{\vec{q},\beta}^{s_{2}}
	=
	\dot{E}_{\vec{q}}^{0,\vec{q}}B_{\beta}^{s_{2}}
	\hookrightarrow
	\dot{E}_{\vec{q}}^{0,\vec{p}}B_{\beta}^{s_{2}},
	\]
	the asserted embeddings follow immediately.
\end{proof}

The following statement is a direct consequence of Theorem
\ref{embeddings3}.

\begin{theorem}
	\label{embeddings5}
	Let $\vec{\alpha}=(\alpha_{1},\ldots,\alpha_{n})\in\mathbb{R}^{n}$,
	$s_{1},s_{2}\in\mathbb{R}$, $0<\vec{r}\leq\infty$,
	$0<\vec{q}\leq\vec{p}\leq\infty$, and $0<\beta\leq\infty$. Assume that
	\[
	s_{1}-\frac{1}{\mathbf p}-\boldsymbol{\alpha}
	\leq
	s_{2}-\frac{1}{\mathbf q}.
	\]
	If
	\[
	-\frac1{p_i}<\alpha_i\leq0,
	\qquad i\in\{1,\ldots,n\},
	\]
	then
	\[
	B_{\vec{q},\beta}^{s_{2}}
	\hookrightarrow
	\dot{E}_{\vec{p}}^{\vec{\alpha},\vec{\theta}}B_{\beta}^{s_{1}},
	\]
	where
	\[
	\vec{\theta}
	=
	\begin{cases}
	\vec{q}, & \text{if }\vec{\alpha}=0,\\
	\vec{r}, & \text{if }-\dfrac1{p_i}<\alpha_i<0,\quad
	i\in\{1,\ldots,n\}.
	\end{cases}
	\]
\end{theorem}

As a consequence, we obtain the following corollary.

\begin{corollary}
	Let 
	$s_{1},s_{2}\in\mathbb{R}$, $0<\vec{r}\leq\infty$,
	$0<\vec{q}\leq\vec{p}\leq\infty$, and $0<\beta\leq\infty$. Assume that
	\[
	s_{1}-\frac{1}{\mathbf p}
	\leq
	s_{2}-\frac{1}{\mathbf q}.
	\]
	Then
	\[
	B_{\vec{q},\beta}^{s_{2}}
	\hookrightarrow
	\dot{E}_{\vec{p}}^{0,\vec{q}}B_{\beta}^{s_{1}}
	\hookrightarrow
	B_{\vec{p},\beta}^{s_{1}}.
	\]
\end{corollary}

\begin{proof}
	It is sufficient to choose $\vec{\theta}=\vec{q}$ and
	$\vec{\alpha}=0$ in Theorem \ref{embeddings5}. Since
	\[
	\dot{E}_{\vec{p}}^{0,\vec{q}}B_{\beta}^{s_{1}}
	\hookrightarrow
	\dot{E}_{\vec{p}}^{0,\vec{p}}B_{\beta}^{s_{1}}
	=
	B_{\vec{p},\beta}^{s_{1}},
	\]
	the conclusion follows.
\end{proof}

\begin{remark}
	The Sobolev-type embeddings established in this section extend and refine
	the corresponding results obtained in \cite{JS07}.
\end{remark}

\subsection{Sobolev embeddings for the spaces $\dot{E}_{\vec{p}}^{\vec{%
\protect\alpha},\vec{q}}F_{\protect\beta }^{s}$}

It is well known that
\begin{equation}
F_{q,\infty}^{s_{2}}\hookrightarrow F_{p,\beta}^{s_{1}},
\label{Sobolev-Tr-Li}
\end{equation}
provided that
\[
s_{1}-\frac{n}{p}=s_{2}-\frac{n}{q},
\]
where $0<q<p<\infty$ and $0<\beta\leq\infty$; see e.g.
\cite[Theorem 2.7.1]{T83}. In this section, we extend this Sobolev-type
embedding to mixed-norm Herz--Triebel--Lizorkin spaces.

To this end, we first establish the corresponding Sobolev embedding
properties for the sequence spaces
$\dot{E}_{\vec p}^{\vec\alpha,\vec r}f_{\beta}^{s}$. Let $0<\vec p,\vec q<\infty$, $\vec\alpha_1=(\alpha_1^{1},\ldots,\alpha_n^{1})\in\mathbb R^n$ and 
$\vec\alpha_2=(\alpha_1^{2},\ldots,\alpha_n^{2})\in\mathbb R^n$.
Recall that
\[
\frac{1}{\mathbf p}
=\sum_{i=1}^{n}\frac{1}{p_i},
\qquad
\frac{1}{\mathbf q}
=\sum_{i=1}^{n}\frac{1}{q_i},
\]
and
\[
\boldsymbol{\alpha}_{j}
=\sum_{i=1}^{n}\alpha_i^{\,j},
\qquad
j\in\{1,2\}.
\]

 For $v\in\{1,\ldots,n\}$, we define
\[
\frac{1}{\mathbf p_v}
=\sum_{i=v}^{n}\frac{1}{p_i},
\qquad
\frac{1}{\mathbf q_v}
=\sum_{i=v}^{n}\frac{1}{q_i},
\]
and  
\[
\boldsymbol{\tilde{\mathbf{\alpha}}_{j,v}}
=\sum_{i=v}^{n}\alpha_i^{\,j},
\qquad
j\in\{1,2\}.
\]
Furthermore, let
\[
s_2^{\,v}
=s_2-\frac{1}{\mathbf q_v}
+\frac{1}{\mathbf p_v}
+\boldsymbol{\tilde{\mathbf{\alpha}}_{1,v}}
-\boldsymbol{\tilde{\mathbf{\alpha}}_{2,v}}.
\]

The following theorem establishes a Sobolev-type embedding for the mixed-norm Herz--Triebel--Lizorkin spaces
$\dot{E}_{\vec p}^{\vec\alpha,\vec r}f_{\beta}^{s}$.
\begin{theorem}
	\label{embeddings-sobolev}
	Let $s_1,s_2\in\mathbb{R}$,
	$0<\vec q<\vec p<\infty$,
	$0<\vec r\le \infty$,
	$\vec\alpha_1=(\alpha_1^{1},\ldots,\alpha_n^{1})\in\mathbb{R}^n$,
	$\vec\alpha_2=(\alpha_1^{2},\ldots,\alpha_n^{2})\in\mathbb{R}^n$,
	and $0<\theta,\beta\le \infty$.
	Assume that
	\[
	\alpha_i^{2}\ge \alpha_i^{1}>-\frac1{p_i},
	\qquad i\in \{1,\dots,n\},
	\]
	and
	\begin{equation}
	s_1-\frac1{\mathbf p}-\boldsymbol{\alpha}_1
	=
	s_2-\frac1{\mathbf q}-\boldsymbol{\alpha}_2.
	\label{newexp1.1}
	\end{equation}
	Then
	\begin{equation}
	\dot{E}_{\vec q}^{\vec\alpha_2,\vec r}
	f_{\theta}^{s_2}
	\hookrightarrow
	\dot{E}_{\vec p}^{\vec\alpha_1,\vec r}
	f_{\beta}^{s_1}.
	\label{Sobolev-emb1.1}
	\end{equation}
\end{theorem}

\begin{proof}
By similarity, it suffices to consider the case $\beta=1$,
$1\leq \vec{p},\vec{q}<\infty$, and $1\leq \vec{r}\leq \infty$.
Let $\lambda \in \dot{E}_{\vec{q}}^{\vec{\alpha}_{2},\vec{r}}
f_{\theta}^{s_{2}}$.
The proof is divided into two steps.

\textit{Step 1.} Define
\[
\digamma
=
\Biggl\|
\sum_{v=0}^{\infty}
2^{v\left(s_{1}+\frac{n}{2}\right)}
\sum_{m\in\mathbb Z^{n}}
\lambda_{v,m}\chi_{v,m}
\Biggr\|_{\dot K_{p_{1}}^{\alpha_{1}^{1},r_{1}}}
\]
and 
\[
V
=
\sum_{v=0}^{\infty}2^{(s_{2}^{2}+\frac n2)v}
\sum_{\widetilde m\in\mathbb Z^{\,n-1}}
\Bigl\|
\sum_{m_1\in\mathbb Z}
\lambda_{v,m}\chi_{v,m_1}
\Bigr\|_{\dot K_{q_1}^{\alpha_1^{2},r_1}}
\chi_{v,\widetilde m},
\]
where
\[
m=(m_{1},\ldots,m_{n}),
\qquad
\tilde m=(m_{2},\ldots,m_{n}).
\]
We claim that 
\[
\digamma
\lesssim V.
\]
Furthermore, we claim that for every $j\in\{2,\ldots,n-1\}$

\begin{equation*}
\big\|\cdots \big\|V\big\|_{\dot{K}_{p_{2}}^{\alpha_{2}^{1},r_{2}}}
\cdots \big\|_{\dot{K}_{p_{j}}^{\alpha_{j}^{1},r_{j}}}
\end{equation*}
can be estimated from above by
\begin{equation}
C\sum_{v=0}^{\infty}
2^{(s_{2}^{j+1}+\frac{n}{2})v}
\sum_{\tilde m_{j}\in\mathbb Z^{\,n-j}}
\Big\|\cdots
\Big\|
\sum_{\bar m_{j}\in\mathbb Z^{j}}
\lambda_{v,m}\chi_{v,\bar m_{j}}
\Big\|_{\dot K_{q_{1}}^{\alpha_{1}^{2},r_{1}}}
\cdots
\Big\|_{\dot K_{q_{j}}^{\alpha_{j}^{2},r_{j}}}
\chi_{v,\tilde m_{j}},
\label{sobolev3}
\end{equation}
where
\[
m=(m_{1},\ldots,m_{n}),\qquad
\bar m_{j}=(m_{1},\ldots,m_{j}),
\qquad
\tilde m_{j}=(m_{j+1},\ldots,m_{n}).
\]

In \eqref{sobolev3}, we take $j=n-1$ and define
\begin{equation*}
\hat{\lambda}_{v,m_{n}}
=
\Big\|\cdots
\Big\|
\sum_{\bar m\in\mathbb Z^{\,n-1}}
\lambda_{v,m}\chi_{v,\bar m}
\Big\|_{\dot K_{q_{1}}^{\alpha_{1}^{2},r_{1}}}
\cdots
\Big\|_{\dot K_{q_{n-1}}^{\alpha_{n-1}^{2},r_{n-1}}},
\end{equation*}
where $\bar m=(m_{1},\ldots,m_{n-1})$.

Observe that
\begin{align*}
\|\lambda\|_{\dot E_{\vec p}^{\vec\alpha_{1},\vec r}f_{1}^{s_{1}}}
&\lesssim
\big\|\cdots \big\|V\big\|_{\dot K_{p_{2}}^{\alpha_{2}^{1},r_{2}}}
\cdots \big\|_{\dot K_{p_{n}}^{\alpha_{n}^{1},r_{n}}} \\
&\lesssim
\|\hat{\lambda}\|_{\dot K_{p_{n}}^{\alpha_{n}^{1},r_{n}}
	f_{1}^{s_{2}^{n}}}.
\end{align*}

Recall that
\[
\dot K_{q_{n}}^{\alpha_{n}^{2},r_{n}}
f_{\infty}^{s_{2}}
\hookrightarrow
\dot K_{p_{n}}^{\alpha_{n}^{1},r_{n}}
f_{1}^{s_{2}^{n}},
\]
see \cite{Drihem2.13}. Consequently,
\begin{equation*}
\|\lambda\|_{\dot E_{\vec p}^{\vec\alpha_{1},\vec r}f_{1}^{s_{1}}}
\lesssim
\|\hat{\lambda}\|_{\dot K_{q_{n}}^{\alpha_{n}^{2},r_{n}}
	f_{\infty}^{s_{2}}}
\lesssim
\,\|\lambda\|_{\dot E_{\vec q}^{\vec\alpha_{2},\vec r}
	f_{\infty}^{s_{2}}}.
\end{equation*}

This completes the proof of \eqref{Sobolev-emb1.1}.

\textit{Step 2.} We proceed to prove the assertions stated in Step 1.

\textit{Substep 2.1.} We show that $
\digamma
\lesssim V.$ By the definition of the Herz norm,
\begin{align}
\digamma
&\lesssim
\Biggl(
\sum_{k_{1}=-\infty}^{2}
2^{k_{1}\alpha_{1}^{1}r_{1}}
\Bigl\|
\sum_{v=0}^{\infty}
2^{v\left(s_{1}+\frac{n}{2}\right)}
\sum_{m\in\mathbb Z^{n}}
\lambda_{v,m}\chi_{v,m}\chi_{k_{1}}
\Bigr\|_{L^{p_{1}}(\mathbb R)}^{r_{1}}
\Biggr)^{1/r_{1}}
\label{est2-bis}
\\
&\quad+
\Biggl(
\sum_{k_{1}=3}^{\infty}
2^{k_{1}\alpha_{1}^{1}r_{1}}
\Bigl\|
\sum_{v=0}^{\infty}
2^{v\left(s_{1}+\frac{n}{2}\right)}
\sum_{m\in\mathbb Z^{n}}
\lambda_{v,m}\chi_{v,m}\chi_{k_{1}}
\Bigr\|_{L^{p_{1}}(\mathbb R)}^{r_{1}}
\Biggr)^{1/r_{1}}.
\label{est3-bis}
\end{align}

The term \eqref{est2-bis} is estimated as follows:
\begin{align*}
&
\Biggl(
\sum_{k_{1}=-\infty}^{2}
2^{k_{1}\alpha_{1}^{1}r_{1}}
\Bigl\|
\sum_{v=0}^{2-k_{1}}
2^{v\left(s_{1}+\frac{n}{2}\right)}
\sum_{m\in\mathbb Z^{n}}
\lambda_{v,m}\chi_{v,m}\chi_{k_{1}}
\Bigr\|_{L^{p_{1}}(\mathbb R)}^{r_{1}}
\Biggr)^{1/r_{1}}
\\
&\quad+
\Biggl(
\sum_{k_{1}=-\infty}^{2}
2^{k_{1}\alpha_{1}^{1}r_{1}}
\Bigl\|
\sum_{v=3-k_{1}}^{\infty}
2^{v\left(s_{1}+\frac{n}{2}\right)}
\sum_{m\in\mathbb Z^{n}}
\lambda_{v,m}\chi_{v,m}\chi_{k_{1}}
\Bigr\|_{L^{p_{1}}(\mathbb R)}^{r_{1}}
\Biggr)^{1/r_{1}}
\\
&=: I_{1}+I_{2}.
\end{align*}
\textit{Estimation of \(I_{1}\).}
Let \(x_{1}\in R_{k_{1}}\cap Q_{v,m_{1}}^{1}\) and
\(y_{1}\in Q_{v,m_{1}}^{1}\), where \(k_{1},m_{1}\in\mathbb Z\).
Since
\[
|x_{1}-y_{1}|\le 2^{1-v},
\]
it follows that
\[
|y_{1}|
\le |x_{1}-y_{1}|+|x_{1}|
\le 2^{1-v}+2^{k_{1}}
\le 2^{3-v},
\]
because \(v\le 2-k_{1}\). Hence \(y_{1}\) belongs to the interval
\[
Q_{v}=(-2^{3-v},2^{3-v}).
\]

Therefore,
\[
|\lambda_{v,m}|^{t_{1}}
=2^{v}\int_{\mathbb R}
|\lambda_{v,m}|^{t_{1}}
\chi_{v,m_{1}}(y_{1})\,dy_{1}
\le
2^{v}\int_{Q_{v}}
|\lambda_{v,m}|^{t_{1}}
\chi_{v,m_{1}}(y_{1})\,dy_{1},
\]
whenever \(x_{1}\in R_{k_{1}}\cap Q_{v,m_{1}}^{1}\),
\(m=(m_{1},\ldots,m_{n})\in\mathbb Z^{n}\), and \(t_{1}>0\).

Consequently, for every \(x_{1}\in R_{k_{1}}\),
\[
\sum_{m_{1}\in\mathbb Z}
|\lambda_{v,m}|^{t_{1}}
\chi_{v,m_{1}}(x_{1})
\le
2^{v}
\int_{Q_{v}}
\sum_{m_{1}\in\mathbb Z}
|\lambda_{v,m}|^{t_{1}}
\chi_{v,m_{1}}(y_{1})\,dy_{1},
\]
and hence
\[
\sum_{m_{1}\in\mathbb Z}
|\lambda_{v,m}|^{t_{1}}
\chi_{v,m_{1}}(x_{1})
\le
2^{v}
\Bigl\|
\sum_{m_{1}\in\mathbb Z}
\lambda_{v,m}\chi_{v,m_{1}}\chi_{Q_v}
\Bigr\|_{L^{t_{1}}(\mathbb R)}^{t_{1}} .
\]

This yields
\begin{align*}
&
2^{\alpha_{1}^{1}k_{1}}
\Bigl\|
\sum_{v=0}^{2-k_{1}}
2^{v(s_{1}+\frac n2)}
\sum_{m\in\mathbb Z^{n}}
\lambda_{v,m}\chi_{v,m}\chi_{k_{1}}
\Bigr\|_{L^{p_{1}}(\mathbb R)}
\\
&\lesssim
2^{(\alpha_{1}^{1}+\frac1{p_{1}})k_{1}}
\sum_{v=0}^{2-k_{1}}
2^{v(s_{1}+\frac n2+\frac1{t_{1}})}
\sum_{\tilde m\in\mathbb Z^{n-1}}
\Bigl\|
\sum_{m_{1}\in\mathbb Z}
\lambda_{v,m}\chi_{v,m_{1}}\chi_{Q_v}
\Bigr\|_{L^{t_{1}}(\mathbb R)}
\chi_{v,\tilde m},
\end{align*}
where the implicit constant is independent of \(k_{1}\).

Choose \(t_{1}>0\) such that
\[
\frac1{t_{1}}
>
\max\Bigl(
\frac1{q_{1}},
\frac1{q_{1}}+\alpha_{1}^{2}
\Bigr).
\]
Using \eqref{newexp1.1} and Lemma~\ref{lem:lq-inequality}, we obtain
\begin{align*}
I_{1}^{r_{1}}
&\lesssim
\sum_{v=0}^{\infty}
2^{v(
	s_{2}-\frac1{\mathbf q}
	-\boldsymbol{\alpha}_{2}
	+\frac1{\mathbf p_{2}}
	+\boldsymbol{\tilde{\mathbf{\alpha}}_{1,2}}
	+\frac1{t_{1}}
	+\frac n2
	)r_{1}}
\sum_{\tilde m\in\mathbb Z^{n-1}}
\Bigl\|
\sum_{m_{1}\in\mathbb Z}
\lambda_{v,m}\chi_{v,m_{1}}\chi_{Q_v}
\Bigr\|_{L^{t_{1}}(\mathbb R)}^{r_{1}}
\chi_{v,\tilde m}
\\
&\lesssim
\sum_{v=0}^{\infty}
2^{v(
	s_{2}-\frac1{\mathbf q}
	-\boldsymbol{\alpha}_{2}
	+\frac1{\mathbf p_{2}}
	+\boldsymbol{\tilde{\mathbf{\alpha}}_{1,2}}
	+\frac1{t_{1}}
	+\frac n2
	)r_{1}}
\sum_{\tilde m\in\mathbb Z^{n-1}}
\Bigl(
\sum_{h\le -v}
\Bigl\|
\sum_{m_{1}\in\mathbb Z}
\lambda_{v,m}
\chi_{v,m_{1}}
\chi_{h+3}
\Bigr\|_{L^{t_{1}}(\mathbb R)}^{\tau}
\Bigr)^{\frac {r_{1}} \tau}
\chi_{v,\tilde m},
\end{align*}
where $\chi_{h+3}$ denotes the characteristic function of the annulus
$R_{h+3}\subset \mathbb{R}$ and $0<\tau \leq \min(1,t_{1})$.

By H\"{o}lder's inequality, with
\[
\frac1d
=
\frac1{t_{1}}
-\frac1{q_{1}}
-\alpha_{1}^{2},
\]
we obtain
\begin{align*}
I_{1}^{r_{1}}
\lesssim
\sum_{v=0}^{\infty}
2^{v\frac{r_{1}}d}
\Bigl(
\sum_{h\le -v}
2^{h\tau(\frac1d+\alpha_{1}^{2})}
\sup_{j\ge0}\Bigl(
\sum_{\tilde m\in\mathbb Z^{n-1}}
\Bigl\|
2^{(s_{2}^{2}+\frac n2)j}
\sum_{m_{1}\in\mathbb Z}
\lambda_{j,m}\chi_{j,m_{1}}\chi_{h+3}
\Bigr\|_{L^{q_{1}}(\mathbb R)}^{\tau}
\chi_{j,\tilde m}\Bigl)
\Bigr)^{\frac {r_{1}} \tau} .
\end{align*}

Applying Lemma~\ref{lem:lq-inequality} once more yields
\begin{align*}
I_{1}
&\lesssim
\sum_{j=0}^{\infty}
2^{(s_{2}^{2}+\frac n2)j}
\sum_{\tilde m\in\mathbb Z^{n-1}}
\Bigl(
\sum_{h=0}^{\infty}
2^{-\alpha_{1}^{2}hr_{1}}
\Bigl\|
\sum_{m_{1}\in\mathbb Z}
\lambda_{j,m}\chi_{j,m_{1}}\chi_{-h+3}
\Bigr\|_{L^{q_{1}}(\mathbb R)}^{r_{1}}
\Bigr)^{1/r_{1}}
\chi_{j,\tilde m}
\\
&\lesssim
\sum_{j=0}^{\infty}
2^{(s_{2}^{2}+\frac n2)j}
\sum_{\tilde m\in\mathbb Z^{n-1}}
\Bigl\|
\sum_{m_{1}\in\mathbb Z}
\lambda_{j,m}\chi_{j,m_{1}}
\Bigr\|_{\dot K_{q_{1}}^{\alpha_{1}^{2},r_{1}}}
\chi_{j,\tilde m}.
\end{align*}
\textit{Estimation of $I_{2}$.}
We shall prove that
\begin{align*}
&2^{k_{1}\alpha_{1}^{1}}
\Big\|
\sum_{v=3-k_{1}}^{\infty}
2^{v(s_{1}+\frac n2)}
\sum_{m\in\mathbb Z^{n}}
\lambda_{v,m}\chi_{v,m}\chi_{k_{1}}
\Big\|_{L^{p_{1}}(\mathbb R)}
\\
&\lesssim
\sum_{j=0}^{\infty}
2^{j(s_{2}^{2}+\frac n2)}
\sum_{\tilde m\in\mathbb Z^{n-1}}
2^{k_{1}\alpha_{1}^{2}}
\Big\|
\sum_{m_{1}\in\mathbb Z}
\lambda_{j,m}\chi_{j,m_{1}}
\chi_{\widetilde R_{k_{1}}}
\Big\|_{L^{q_{1}}(\mathbb R)}
\chi_{j,\tilde m}
\\
&=:\delta ,
\end{align*}
for every $k_{1}\le 3$, where
\[
\widetilde R_{k_{1}}
=
\bigl\{
x_{1}\in\mathbb R:
2^{k_{1}-2}<|x_{1}|<2^{k_{1}+2}
\bigr\}.
\]

Equivalently, it suffices to show that
\begin{equation}
\int_{R_{k_{1}}}
2^{k_{1}\alpha_{1}^{1}p_{1}}
\delta^{-p_{1}}
\Big(
\sum_{v=3-k_{1}}^{\infty}
2^{v(s_{1}+\frac n2)}
\sum_{m\in\mathbb Z^{n}}
|\lambda_{v,m}|
\chi_{v,m}(x)
\Big)^{p_{1}}
dx_{1}
\lesssim 1.
\label{sobolevfor}
\end{equation}

The left-hand side of \eqref{sobolevfor} can be rewritten as
\[
\int_{R_{k_{1}}}
\Big(
\delta^{-1}
\sum_{v=3-k_{1}}^{\infty}
2^{v(\frac1{p_{1}}-\frac1{q_{1}})
	+(\alpha_{1}^{1}-\alpha_{1}^{2})(v+k_{1})
	+v(s_{2}^{2}+\frac n2)
	+k_{1}\alpha_{1}^{2}}
\sum_{m\in\mathbb Z^{n}}
|\lambda_{v,m}|
\chi_{v,m}(x)
\Big)^{p_{1}}
dx_{1}.
\]

Since $\alpha_{1}^{2}\ge \alpha_{1}^{1}$ and $v+k_{1}\ge3$,
\[
2^{(\alpha_{1}^{1}-\alpha_{1}^{2})(v+k_{1})}\le1.
\]
Hence  the left-hand side of \eqref{sobolevfor} is bounded by
\begin{align*}
&\int_{R_{k_{1}}}
\Big(
\delta^{-1}
\sum_{v=3-k_{1}}^{\infty}
2^{v(\frac1{p_{1}}-\frac1{q_{1}})
	+v(s_{2}^{2}+\frac n2)
	+k_{1}\alpha_{1}^{2}}
\sum_{m\in\mathbb Z^{n}}
|\lambda_{v,m}|
\chi_{v,m}(x)
\Big)^{p_{1}}
dx_{1}
\\
&=:T_{k_{1}}.
\end{align*}

We shall prove that \(T_{k_{1}}\lesssim 1\) for every \(k_{1}\le 3\).
Our argument partially relies on the decomposition technique used in
\cite[Theorem~3.1]{V}; see also \cite{Drihem2.13}.

Set
\[
P_{k_{1}}
=
\sup_{v\ge 3-k_{1}}
2^{v(s_{2}^{2}+\frac n2)+k_{1}\alpha_{1}^{2}}
\sum_{m\in\mathbb Z^{n}}
|\lambda_{v,m}|\chi_{v,m}.
\]
Then
\[
T_{k_{1}}
=
\int_{R_{k_{1}}\cap\{P_{k_{1}}\le\delta\}}
\big(\cdots\,\big)^{p_{1}}dx_{1}
+
\int_{R_{k_{1}}\cap\{P_{k_{1}}>\delta\}}
\big(\cdots\,\big)^{p_{1}}dx_{1}
=:T_{k_{1}}^{1}+T_{k_{1}}^{2}.
\]

\medskip

\textit{Estimate of \(T_{k_{1}}^{1}\).}
Since \(P_{k_{1}}\le\delta\) on the domain of integration, we obtain
\[
T_{k_{1}}^{1}
\lesssim
\int_{R_{k_{1}}}
\left(
\delta^{-1}
P_{k_{1}}(x)
\right)^{q_{1}}
dx_{1}.
\]
Hence, by the definition of \(\delta\),
\[
T_{k_{1}}^{1}\lesssim 1.
\]

\medskip

\textit{Estimate of \(T_{k_{1}}^{2}\).}
Using a dyadic decomposition of the level set \(\{P_{k_{1}}>\delta\}\), we write
\[
T_{k_{1}}^{2}
=
\sum_{N=0}^{\infty}
\int_{R_{k_{1}}
	\cap
	\{2^{N/q_{1}}
	<
	\delta^{-1}P_{k_{1}}
	\le
	2^{(N+1)/q_{1}}\}}
\big(\cdots\,\big)^{p_{1}}dx_{1}.
\]

For a fixed \(N\), decompose
\[
\sum_{v=3-k_{1}}^{\infty}
2^{v(\frac 1{p_{1}}-\frac 1{q_{1}})
	+v(s_{2}^{2}+\frac n2)
	+k_{1}\alpha_{1}^{2}}
\sum_{m\in\mathbb Z^{n}}
|\lambda_{v,m}|\chi_{v,m}
=
J_{1,k_{1}}+J_{2,k_{1}},
\]
where
\[
J_{1,k_{1}}
=
\sum_{v=3-k_{1}}^{N}
2^{v(\frac 1{p_{1}}-\frac 1{q_{1}})
	+v(s_{2}^{2}+\frac n2)
	+k_{1}\alpha_{1}^{2}}
\sum_{m\in\mathbb Z^{n}}
|\lambda_{v,m}|\chi_{v,m},
\]
and
\[
J_{2,k_{1}}
=
\sum_{v=N+1}^{\infty}
2^{v(\frac 1{p_{1}}-\frac 1{q_{1}})
	+v(s_{2}^{2}+\frac n2)
	+k_{1}\alpha_{1}^{2}}
\sum_{m\in\mathbb Z^{n}}
|\lambda_{v,m}|\chi_{v,m}.
\]

Let \(x_{1}\in R_{k_{1}}\cap Q_{v,m_{1}}^{1}\) and
\(y_{1}\in Q_{v,m_{1}}^{1}\).
Since
\[
|x_{1}-y_{1}|\le 2^{1-v},
\]
it follows that
\[
2^{k_{1}-2}
<
|y_{1}|
<
2^{1-v}+2^{k_{1}}
<
2^{k_{1}+2},
\]
and therefore \(y_{1}\in\widetilde R_{k_{1}}\). Consequently,
\[
|\lambda_{v,m}|^{q_{1}}
=
2^{v}
\int_{\mathbb R}
|\lambda_{v,m}|^{q_{1}}
\chi_{v,m_{1}}(y_{1})\,dy_{1}
\le
2^{v}
\int_{\widetilde R_{k_{1}}}
|\lambda_{v,m}|^{q_{1}}
\chi_{v,m_{1}}(y_{1})\,dy_{1}.
\]

Hence,
\begin{align*}
&\sum_{\widetilde m\in\mathbb Z^{n-1}}
\chi_{v,\widetilde m}
\sum_{m_{1}\in\mathbb Z}
|\lambda_{v,m}|^{q_{1}}
\chi_{v,m_{1}}
\\
&\le
2^{v}
\sum_{\widetilde m\in\mathbb Z^{n-1}}
\chi_{v,\widetilde m}
\int_{\widetilde R_{k_{1}}}
\sum_{m_{1}\in\mathbb Z}
|\lambda_{v,m}|^{q_{1}}
\chi_{v,m_{1}}(y_{1})\,dy_{1}
\\
&=
2^{v}
\sum_{\widetilde m\in\mathbb Z^{n-1}}
\Bigl\|
\sum_{m_{1}\in\mathbb Z}
|\lambda_{v,m}|
\chi_{v,m_{1}}
\chi_{\widetilde R_{k_{1}}}
\Bigr\|_{L^{q_{1}}(\mathbb R)}^{q_{1}}
\chi_{v,\widetilde m}
\\
&\le
2^{(\frac1{q_{1}}-s_{2}^{2}-\frac n2)q_{1}v
	-\alpha_{1}^{2}q_{1}k_{1}}
\delta^{q_{1}}.
\end{align*}

This leads to
\[
\delta^{-1}J_{1,k_{1}}
\lesssim
\sum_{v=3-k_{1}}^{N}
2^{v/p_{1}}
\lesssim
2^{N/p_{1}}.
\]

For \(J_{2,k_{1}}\), we use the definition of \(P_{k_{1}}\):
\begin{align*}
\delta^{-1}J_{2,k_{1}}
&\lesssim
\sum_{v=N+1}^{\infty}
2^{v(\frac1{p_{1}}-\frac1{q_{1}})}
\bigl(
\delta^{-1}
P_{k_{1}}
\bigr)
\\
&\lesssim
\sum_{v=N+1}^{\infty}
2^{(N-v)/q_{1}}\,2^{v/p_{1}}
\lesssim
2^{N/p_{1}},
\end{align*}
whenever \(	2^{N/q_{1}}<\delta^{-1}P_{k_{1}}
\le2^{(N+1)/q_{1}}\)  and we have used  \(p_{1}<q_{1}\).

Therefore,
\[
\delta^{-1}(J_{1,k_{1}}+J_{2,k_{1}})
\lesssim
2^{N/p_{1}},
\]
whenever \(	2^{N/q_{1}}<\delta^{-1}P_{k_{1}}
\le2^{(N+1)/q_{1}}\). Combining the above estimates yields
\begin{align*}
T_{k_{1}}
&\lesssim
\sum_{N=0}^{\infty}
\int_{R_{k_{1}}
	\cap
	\{2^{N/q_{1}}
	<
	\delta^{-1}P_{k_{1}}
	\le
	2^{(N+1)/q_{1}}\}}
2^{N}\,dx_{1}
\\
&\lesssim
\int_{R_{k_{1}}}
\bigl(
\delta^{-1}
P_{k_{1}}(x)
\bigr)^{q_{1}}
dx_{1}
\\
&\lesssim 1.
\end{align*}

\textit{Estimate of \eqref{est3-bis}}. The arguments here are quite similar
to those used in the estimation of $I_{2}$.

\textit{Substep 2.2}. We shall prove that
\[
\|V\|_{\dot K_{p_2}^{\alpha_2^{1},r_2}}
\]
is bounded by
\begin{equation}
C
\sum_{v=0}^{\infty}
2^{(s_{2}^{3}+\frac n2)v}
\sum_{\bar m\in\mathbb Z^{\,n-2}}
\Bigl\|
\Bigl\|
\sum_{\check m\in\mathbb Z^{2}}
\lambda_{v,m}\chi_{v,\check m}
\Bigr\|_{\dot K_{q_1}^{\alpha_1^{2},r_1}}
\Bigr\|_{\dot K_{q_2}^{\alpha_2^{2},r_2}}
\chi_{v,\bar m},
\label{desired-est}
\end{equation}
where
\[
m=(m_1,\ldots,m_n),\qquad
\check m=(m_1,m_2),\qquad
\bar m=(m_3,\ldots,m_n).
\]

Define
\[
\widetilde{\lambda}_{v,\widetilde m}
=
\Bigl\|
\sum_{m_1\in\mathbb Z}
\lambda_{v,m}\chi_{v,m_1}
\Bigr\|_{\dot K_{q_1}^{\alpha_1^{2},r_1}},
\qquad
\widetilde m=(m_2,\ldots,m_n)\in\mathbb Z^{\,n-1}.
\]
Then
\[
\|V\|_{\dot K_{p_2}^{\alpha_2^{1},r_2}}
=
\Biggl(
\sum_{k_2\in\mathbb Z}
2^{k_2\alpha_2^{1}r_2}
\Bigl\|
\sum_{v=0}^{\infty}
2^{(s_{2}^{2}+\frac n2)v}
\sum_{\widetilde m\in\mathbb Z^{\,n-1}}
\widetilde{\lambda}_{v,\widetilde m}
\chi_{v,\widetilde m}\chi_{k_2}
\Bigr\|_{L^{p_2}(\mathbb R)}^{r_2}
\Biggr)^{1/r_2}.
\]

Splitting the outer sum at \(k_2=2\), we obtain
\[
\|V\|_{\dot K_{p_2}^{\alpha_2^{1},r_2}}
\lesssim I_3+I_4,
\]
where
\begin{align*}
I_3
&=
\Biggl(
\sum_{k_2=-\infty}^{2}
2^{k_2\alpha_2^{1}r_2}
\Bigl\|
\sum_{v=0}^{\infty}
2^{(s_{2}^{2}+\frac n2)v}
\sum_{\widetilde m\in\mathbb Z^{\,n-1}}
\widetilde{\lambda}_{v,\widetilde m}
\chi_{v,\widetilde m}\chi_{k_2}
\Bigr\|_{L^{p_2}(\mathbb R)}^{r_2}
\Biggr)^{1/r_2},\\
I_4
&=
\Biggl(
\sum_{k_2=3}^{\infty}
2^{k_2\alpha_2^{1}r_2}
\Bigl\|
\sum_{v=0}^{\infty}
2^{(s_{2}^{2}+\frac n2)v}
\sum_{\widetilde m\in\mathbb Z^{\,n-1}}
\widetilde{\lambda}_{v,\widetilde m}
\chi_{v,\widetilde m}\chi_{k_2}
\Bigr\|_{L^{p_2}(\mathbb R)}^{r_2}
\Biggr)^{1/r_2}.
\end{align*}

Since
\[
s_{2}^{2}
=
s_{2}^{3}
+\frac1{p_2}
-\frac1{q_2}
-\alpha_2^{2}
+\alpha_2^{1},
\]
the estimates for \(I_3\) and \(I_4\) follow exactly as in Substep~2.1.
Hence, \eqref{desired-est} holds.

Proceeding inductively, one obtains the assertion of Step~1 for every
\(j\in\{2,\ldots,n-1\}\). This completes the proof of Step~1.
\end{proof}

From Theorems \ref{phi-tran} and \ref{embeddings-sobolev}, we have the
following Sobolev embedding for\ spaces\ $\dot{E}_{\vec{p}}^{\mathbf{\vec{%
\alpha}},\vec{q}}F_{\beta }^{s}$.

\begin{theorem}
\label{embeddings3.1}Let $s_{1},s_{2}\in \mathbb{R},0<\vec{q}<\vec{p}<\infty
,0<\vec{r}\leq \infty ,\vec{\alpha}_{1}=(\alpha _{1}^{1},...,\alpha
_{n}^{1})\in \mathbb{R}^{n},\vec{\alpha}_{2}=(\alpha _{1}^{2},...,\alpha
_{n}^{2})\in \mathbb{R}^{n}$ and $0<\theta ,\beta \leq \infty $. \textit{We
suppose that}%
\begin{equation*}
\alpha _{i}^{2}\geq \alpha _{i}^{1}>-\frac{1}{p_{i}},\quad i\in \{1,...,n\}
\end{equation*}%
\textit{and }%
\begin{equation}
s_{1}-\frac{1}{\mathbf{p}}-\boldsymbol{\alpha}_{1}\leq s_{2}-\frac{1}{\mathbf{q}%
}-\boldsymbol{\alpha}_{2}.  \label{cond-sobolev}
\end{equation}%
Then%
\begin{equation*}
\dot{E}_{\vec{q}}^{\vec{\alpha}_{2},\vec{r}}F_{\theta
}^{s_{2}}\hookrightarrow \dot{E}_{\vec{p}}^{\vec{\alpha}_{1},\vec{r}%
}F_{\beta }^{s_{1}}.
\end{equation*}
\end{theorem}

\begin{remark}
By Lemma \ref{embeddings2} and Theorem \ref{embeddings3}, the condition\ %
\eqref{cond-sobolev}\ becomes necessary.
\end{remark}

From Theorem\ \ref{embeddings3.1}\ and the fact that $\dot{E}_{\vec{p}}^{\vec{0},%
\vec{p}}F_{\beta }^{s_{1}}=F_{\vec{p},\beta }^{s_{1}}$\ we immediately
arrive at the following results.

\begin{theorem}
\label{embeddings4.1}Let $s_{1},s_{2}\in \mathbb{R},\vec{\alpha}=(\alpha
_{1},...,\alpha _{n})\in \lbrack 0,\infty \mathbb{)}^{n},0<\max (\vec{q},%
\vec{r})<\vec{p}<\infty ,0<\beta \leq \infty $\ and%
\begin{equation*}
s_{1}-\frac{1}{\mathbf{p}}\leq s_{2}-\frac{1}{\mathbf{q}}-\boldsymbol{\alpha}.
\end{equation*}%
Then%
\begin{equation*}
\dot{E}_{q}^{\vec{\alpha},\vec{r}}F_{\theta }^{s_{2}}\hookrightarrow F_{\vec{%
p},\beta }^{s_{1}}.
\end{equation*}
\end{theorem}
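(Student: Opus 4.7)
The strategy is to combine the general Sobolev embedding of Theorem \ref{embeddings3.1} with the monotonicity in the outer exponents from Theorem \ref{embeddings1.1}(iii), using the identification $F_{\vec{p},\beta}^{s_{1}}=\dot{E}_{\vec{p}}^{0,\vec{p}}F_{\beta}^{s_{1}}$. Thus it suffices to produce the chain
\[
\dot{E}_{\vec{q}}^{\vec{\alpha},\vec{r}}F_{\theta}^{s_{2}}\hookrightarrow \dot{E}_{\vec{p}}^{0,\vec{r}}F_{\beta}^{s_{1}}\hookrightarrow \dot{E}_{\vec{p}}^{0,\vec{p}}F_{\beta}^{s_{1}}=F_{\vec{p},\beta}^{s_{1}}.
\]

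For the first embedding, I would apply Theorem \ref{embeddings3.1} with the parameter choices $\vec{\alpha}_{1}=0$ and $\vec{\alpha}_{2}=\vec{\alpha}$. The hypothesis $\alpha_{i}\geq 0$ together with $0<\vec{p}<\infty$ guarantees $\alpha_{i}^{2}=\alpha_{i}\geq 0=\alpha_{i}^{1}>-1/p_{i}$, while the scaling identity $s_{1}-\tfrac{1}{\mathbf{p}}-\mathbf{\alpha}_{1}=s_{2}-\tfrac{1}{\mathbf{q}}-\mathbf{\alpha}_{2}$ required by Theorem \ref{embeddings3.1} reduces, under these choices, to exactly the assumed relation $s_{1}-\tfrac{1}{\mathbf{p}}=s_{2}-\tfrac{1}{\mathbf{q}}-\mathbf{\alpha}$. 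The remaining hypothesis $0<\vec{q}<\vec{p}<\infty$ is ensured by our assumption $0<\max(\vec{q},\vec{r})<\vec{p}<\infty$, interpreted componentwise.

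For the second embedding, I would apply Theorem \ref{embeddings1.1}(iii) with $\vec{q}_{1}=\vec{r}$ and $\vec{q}_{2}=\vec{p}$. The assumption $\max(\vec{q},\vec{r})<\vec{p}$ in particular yields $r_{i}\leq p_{i}$ for every $i\in\{1,\dots,n\}$, which is precisely what part (iii) requires in order to conclude $\dot{E}_{\vec{p}}^{0,\vec{r}}F_{\beta}^{s_{1}}\hookrightarrow \dot{E}_{\vec{p}}^{0,\vec{p}}F_{\beta}^{s_{1}}$. Composing the two embeddings and using the identification $\dot{E}_{\vec{p}}^{0,\vec{p}}F_{\beta}^{s_{1}}=F_{\vec{p},\beta}^{s_{1}}$ noted immediately after Definition \ref{B-F-def} yields the claim.

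Since both ingredients are essentially immediate from already established results, there is no substantive obstacle. The only subtle point is the componentwise reading of $\max(\vec{q},\vec{r})<\vec{p}$, which must be checked in order to invoke both Theorem \ref{embeddings3.1} (for the pair $(\vec{q},\vec{p})$) and Theorem \ref{embeddings1.1}(iii) (for the pair $(\vec{r},\vec{p})$); note that the fine exponent $\theta$ plays no role here because Theorem \ref{embeddings3.1} delivers an embedding for every $\theta\in(0,\infty]$.
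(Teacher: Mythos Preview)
Your proof is correct and follows essentially the same route as the paper, which simply states that the result is immediate from Theorem \ref{embeddings3.1} together with the identification $\dot{E}_{\vec{p}}^{0,\vec{p}}F_{\beta}^{s_{1}}=F_{\vec{p},\beta}^{s_{1}}$. In fact you are slightly more explicit than the paper: since Theorem \ref{embeddings3.1} only delivers $\dot{E}_{\vec{p}}^{0,\vec{r}}F_{\beta}^{s_{1}}$ with the same outer exponent $\vec{r}$ on both sides, the intermediate step $\dot{E}_{\vec{p}}^{0,\vec{r}}F_{\beta}^{s_{1}}\hookrightarrow \dot{E}_{\vec{p}}^{0,\vec{p}}F_{\beta}^{s_{1}}$ via Theorem \ref{embeddings1.1}(iii) (using $\vec{r}<\vec{p}$) is indeed needed and you supply it.
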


\begin{theorem}
\label{embeddings5.1}Let $s_{1},s_{2}\in \mathbb{R},0<\vec{q}\leq \vec{r}%
<\infty ,0<\vec{q}<\vec{p}<\infty $ and $0<\beta \leq \infty $. Assume that $%
\vec{\alpha}=(\alpha _{1},...,\alpha _{n})$\ with $\alpha _{i}>-\frac{1}{%
p_{i}},\ i\in \{1,...,n\}$ and 
\begin{equation*}
s_{1}-\frac{1}{\mathbf{p}}-\boldsymbol{\alpha}\leq s_{2}-\frac{1}{\mathbf{q}}.
\end{equation*}%
Then%
\begin{equation*}
F_{\vec{q},\theta }^{s_{2}}\hookrightarrow \dot{E}_{\vec{p}}^{\vec{\alpha},%
\vec{r}}F_{\beta }^{s_{1}}.
\end{equation*}
\end{theorem}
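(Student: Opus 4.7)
The plan is to mimic the proof strategy used for the $B$-version (Theorem \ref{embeddings5}) and decompose the target embedding as a short chain that passes through an intermediate mixed-norm Herz--Triebel--Lizorkin space with trivial Herz weight $\vec{\alpha}_2 = 0$. Concretely, I will exploit two facts: the tautological identification $F_{\vec{q},\theta}^{s_2} = \dot{E}_{\vec{q}}^{0,\vec{q}} F_\theta^{s_2}$, which is immediate from Definition \ref{B-F-def} once the Herz weight vanishes and the inner/outer integrability coincide, and the Sobolev embedding of Theorem \ref{embeddings3.1}, which will do the heavy lifting.

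First, since $\vec{q} \le \vec{r}$ componentwise, the monotonicity in the outer index supplied by \eqref{embed3} of Theorem \ref{embeddings1.1} yields
\begin{equation*}
F_{\vec{q},\theta}^{s_2} \;=\; \dot{E}_{\vec{q}}^{0,\vec{q}} F_\theta^{s_2} \;\hookrightarrow\; \dot{E}_{\vec{q}}^{0,\vec{r}} F_\theta^{s_2}.
\end{equation*}
Second, I apply Theorem \ref{embeddings3.1} with the choices $\vec{\alpha}_2 = 0$, $\vec{\alpha}_1 = \vec{\alpha}$, integrability upgrade $\vec{q} \to \vec{p}$, fine index change $\theta \to \beta$, and common outer index $\vec{r}$. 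The smoothness compatibility assumption of Theorem \ref{embeddings3.1}, namely $s_1 - \tfrac{1}{\mathbf{p}} - \mathbf{\alpha}_1 = s_2 - \tfrac{1}{\mathbf{q}} - \mathbf{\alpha}_2$, becomes precisely the present hypothesis $s_1 - \tfrac{1}{\mathbf{p}} - \mathbf{\alpha} = s_2 - \tfrac{1}{\mathbf{q}}$ after inserting $\mathbf{\alpha}_2 = 0$, and its structural requirement $\alpha_i^2 \ge \alpha_i^1 > -\tfrac{1}{p_i}$ specialises to $0 \ge \alpha_i > -\tfrac{1}{p_i}$. Composing the two displays above gives the desired embedding $F_{\vec{q},\theta}^{s_2} \hookrightarrow \dot{E}_{\vec{p}}^{\vec{\alpha},\vec{r}} F_\beta^{s_1}$.

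The only real obstacle here is a hidden sign restriction on the Herz exponent: Theorem \ref{embeddings3.1} forces the weight of the source to dominate that of the target, so the argument above requires $\alpha_i \le 0$ in addition to $\alpha_i > -\tfrac{1}{p_i}$. In other words, the statement should be read under the implicit supplementary condition $-\tfrac{1}{p_i} < \alpha_i \le 0$, exactly mirroring the corresponding restriction in the $B$-scale version Theorem \ref{embeddings5}. For $\alpha_i > 0$ the present routing through $\vec{\alpha}_2 = 0$ is unavailable, since one cannot downgrade the Herz weight; such a range would demand an essentially different argument (or additional hypotheses) and is not handled by this plan.
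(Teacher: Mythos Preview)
Your approach is correct and is precisely the paper's own: both derive the result from Theorem~\ref{embeddings3.1} via the identification $F_{\vec{q},\theta}^{s_2}=\dot{E}_{\vec{q}}^{0,\vec{q}}F_\theta^{s_2}$ together with the monotonicity embedding~\eqref{embed3}. Your observation about the hidden sign restriction is also on point: the route through Theorem~\ref{embeddings3.1} with $\vec{\alpha}_2=0$ forces $0\ge\alpha_i$, so the statement really needs $-\tfrac{1}{p_i}<\alpha_i\le 0$, exactly as in the parallel $B$-result Theorem~\ref{embeddings5}; this condition appears to have been inadvertently dropped from the paper's hypotheses.
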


Using our results, we have the following useful consequences.

\begin{corollary}
Let $s_{1},s_{2},s_{3}\in \mathbb{R},0<\vec{t}<\vec{q}<\vec{p}<\infty
,0<\beta \leq \infty $\ and%
\begin{equation*}
s_{1}-\frac{1}{\mathbf{p}}=s_{2}-\frac{1}{\mathbf{q}}=s_{3}-\frac{1}{\mathbf{%
t}}.
\end{equation*}%
Then%
\begin{equation*}
F_{\vec{t},\infty }^{s_{3}}\hookrightarrow \dot{E}_{\vec{q}}^{\vec{0},\vec{p}%
}F_{\infty }^{s_{2}}\hookrightarrow F_{\vec{p},\beta }^{s_{1}}.
\end{equation*}
\end{corollary}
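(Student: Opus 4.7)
The plan is to derive both inclusions from Theorems~\ref{embeddings5.1} and \ref{embeddings3.1}, paralleling the argument used for the Besov corollary just above. The main device is the identification $F_{\vec{p},\beta}^{s}=\dot{E}_{\vec{p}}^{0,\vec{p}}F_{\beta}^{s}$ recorded after Definition~\ref{B-F-def}, which allows me to write the two endpoints $F_{\vec{t},\infty}^{s_{3}}$ and $F_{\vec{p},\beta}^{s_{1}}$ as mixed-norm Herz-type Triebel-Lizorkin spaces. Once this is done, each inclusion becomes a direct specialisation of one of the two Sobolev embedding theorems from the previous subsection.

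For the first inclusion $F_{\vec{t},\infty}^{s_{3}}\hookrightarrow \dot{E}_{\vec{q}}^{0,\vec{p}}F_{\infty}^{s_{2}}$, I would invoke Theorem~\ref{embeddings5.1} with (in its own notation) $\vec{p}\mapsto\vec{q}$, $\vec{q}\mapsto\vec{t}$, $\vec{r}\mapsto\vec{p}$, $\vec{\alpha}=0$, $\theta=\beta=\infty$, $s_{2}\mapsto s_{3}$ and $s_{1}\mapsto s_{2}$. The running hypothesis $\vec{t}<\vec{q}<\vec{p}$ supplies both $\vec{t}<\vec{q}$ and $\vec{t}\leq\vec{p}$, the sign condition $0>-1/q_{i}$ is trivial, and the scaling equation $s_{2}-\tfrac{1}{\mathbf{q}}=s_{3}-\tfrac{1}{\mathbf{t}}$ is exactly what the corollary assumes. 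For the second inclusion $\dot{E}_{\vec{q}}^{0,\vec{p}}F_{\infty}^{s_{2}}\hookrightarrow \dot{E}_{\vec{p}}^{0,\vec{p}}F_{\beta}^{s_{1}}$, I would apply Theorem~\ref{embeddings3.1} with $\vec{\alpha}_{1}=\vec{\alpha}_{2}=0$, source and target Lebesgue exponents $\vec{q}$ and $\vec{p}$, common sequence parameter $\vec{r}=\vec{p}$, and $\theta=\infty$; the strict inequality $\vec{q}<\vec{p}$, the bound $0>-1/p_{i}$, and the scaling $s_{1}-\tfrac{1}{\mathbf{p}}=s_{2}-\tfrac{1}{\mathbf{q}}$ are all provided by the hypotheses.

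There is no genuine obstacle; the only point requiring attention is the notational bookkeeping between the Lebesgue exponent, the Herz sequence parameter, and the parameter $\vec{r}$ appearing in Theorems~\ref{embeddings5.1}--\ref{embeddings3.1}. I also note that the second step cannot be derived from Theorem~\ref{embeddings4.1}, because the requirement $\max(\vec{q},\vec{r})<\vec{p}$ there is strict and fails at $\vec{r}=\vec{p}$; this is precisely the reason why the more flexible Theorem~\ref{embeddings3.1} is needed at that stage.
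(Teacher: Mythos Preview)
Your proof is correct and follows the same overall strategy as the paper --- identify the endpoint spaces as mixed-norm Herz-type Triebel--Lizorkin spaces and invoke the Sobolev embedding theorems of the preceding subsection --- but the specific theorems you call are not quite the ones the paper uses. For the first inclusion the paper factors through the classical mixed-norm embedding $F_{\vec{t},\infty}^{s_{3}}\hookrightarrow F_{\vec{q},\infty}^{s_{2}}=\dot{E}_{\vec{q}}^{0,\vec{q}}F_{\infty}^{s_{2}}$ and then uses the monotonicity $\dot{E}_{\vec{q}}^{0,\vec{q}}\hookrightarrow \dot{E}_{\vec{q}}^{0,\vec{p}}$ from Theorem~\ref{embeddings1.1}(iii), whereas you go straight to Theorem~\ref{embeddings5.1}; both are valid. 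For the second inclusion the paper appeals to Theorem~\ref{embeddings4.1} with $\vec{r}=\vec{p}$, and your observation that this formally conflicts with the strict hypothesis $\max(\vec{q},\vec{r})<\vec{p}$ there is well taken; your direct use of Theorem~\ref{embeddings3.1} (which places no upper restriction on $\vec{r}$) is the cleaner route and avoids that bookkeeping issue entirely.
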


To prove this it is sufficient to choose in Theorem \ref{embeddings4.1}, $%
\vec{r}=\vec{p}$ and $\vec{\alpha}=\vec{0}$.\ However the desired embeddings are
an immediate consequence of the fact that 
\begin{equation*}
F_{\vec{t},\infty }^{s_{3}}\hookrightarrow F_{\vec{q},\infty }^{s_{2}}=\dot{E%
}_{\vec{q}}^{\vec{0},\vec{q}}F_{\infty }^{s_{2}}\hookrightarrow \dot{E}_{\vec{q}%
}^{\vec{0},\vec{p}}F_{\infty }^{s_{2}}.
\end{equation*}

\begin{corollary}
Let $s_{1},s_{2}\in \mathbb{R},0<\vec{q}<\vec{p}<\infty ,s_{1}-\frac{1}{%
\mathbf{p}}=s_{2}-\frac{1}{\mathbf{q}}$\ and $0<\beta \leq \infty $. Then%
\begin{equation*}
F_{\vec{q},\infty }^{s_{2}}\hookrightarrow \dot{E}_{\vec{p}}^{\vec{0},\vec{q}%
}F_{\beta }^{s_{1}}\hookrightarrow F_{\vec{p},\beta }^{s_{1}}.
\end{equation*}
\end{corollary}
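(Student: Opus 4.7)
The plan is to derive both embeddings directly from results already established in this section. The corollary is a specialization, so no new analytic machinery should be needed; the only care required is in matching indices correctly.

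For the first inclusion $F_{\vec{q},\infty }^{s_{2}}\hookrightarrow \dot{E}_{\vec{p}}^{0,\vec{q}}F_{\beta }^{s_{1}}$, I would invoke Theorem \ref{embeddings5.1} with the specific choices $\vec{\alpha}=\mathbf{0}$ and $\vec{r}=\vec{q}$ (so that the constraint $\vec{q}\leq \vec{r}<\infty $ is trivially satisfied). The balancing condition in Theorem \ref{embeddings5.1}, namely $s_{1}-\frac{1}{\mathbf{p}}-\mathbf{\alpha }=s_{2}-\frac{1}{\mathbf{q}}$, collapses exactly to the hypothesis $s_{1}-\frac{1}{\mathbf{p}}=s_{2}-\frac{1}{\mathbf{q}}$ of the corollary since $\mathbf{\alpha }=0$. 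The requirement $\alpha _{i}>-\frac{1}{p_{i}}$ is immediate. Applying the theorem with $\theta =\infty $ on the source side then yields the claimed embedding into $\dot{E}_{\vec{p}}^{0,\vec{q}}F_{\beta }^{s_{1}}$.

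For the second inclusion $\dot{E}_{\vec{p}}^{0,\vec{q}}F_{\beta }^{s_{1}}\hookrightarrow F_{\vec{p},\beta }^{s_{1}}$, I would use the identification $F_{\vec{p},\beta }^{s_{1}}=\dot{E}_{\vec{p}}^{0,\vec{p}}F_{\beta }^{s_{1}}$ recorded just after Definition \ref{B-F-def}. Since $\vec{q}<\vec{p}$ componentwise, the monotonicity embedding \eqref{embed3} in Theorem \ref{embeddings1.1}(iii), applied with the common exponent vector $\vec{p}$ for the underlying Herz integrability and with the Lebesgue-sequence parameters $\vec{q}_{1}=\vec{q}$, $\vec{q}_{2}=\vec{p}$, gives precisely $\dot{E}_{\vec{p}}^{0,\vec{q}}F_{\beta }^{s_{1}}\hookrightarrow \dot{E}_{\vec{p}}^{0,\vec{p}}F_{\beta }^{s_{1}}=F_{\vec{p},\beta }^{s_{1}}$, which is the desired inclusion.

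I do not anticipate a serious obstacle: the corollary is structurally parallel to the one immediately preceding it for the Besov scale, and the proof pattern (apply Theorem \ref{embeddings5.1} then use the trivial Herz monotonicity) is the same one that the author employs after Theorem \ref{embeddings4.1}. The only point requiring a moment of care is checking that the scalar fine index $\theta $ appearing in the statement of Theorem \ref{embeddings5.1} can be taken to be $\infty $, which is consistent with starting from the source space $F_{\vec{q},\infty }^{s_{2}}$ in the corollary. Once that is observed, the two steps close immediately and the chain of embeddings follows.
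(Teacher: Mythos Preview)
Your proposal is correct and follows essentially the same approach as the paper: apply Theorem~\ref{embeddings5.1} with $\vec{\alpha}=0$ and $\vec{r}=\vec{q}$ for the first embedding, then use the monotonicity $\dot{E}_{\vec{p}}^{0,\vec{q}}F_{\beta}^{s_{1}}\hookrightarrow \dot{E}_{\vec{p}}^{0,\vec{p}}F_{\beta}^{s_{1}}=F_{\vec{p},\beta}^{s_{1}}$ for the second. Your explicit citation of Theorem~\ref{embeddings1.1}(iii) for the monotonicity step is precisely the justification behind the paper's terse phrase ``the fact that''.
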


\begin{proof}
To prove this it is sufficient to choose in Theorem\ \ref{embeddings5.1}, $%
\vec{r}=\vec{q}$ and $\vec{\alpha}=\vec{0}$. Then the desired embeddings are an
immediate consequence of the fact that 
\begin{equation*}
F_{\vec{q},\infty }^{s_{2}}\hookrightarrow \dot{E}_{\vec{p}}^{\vec{0},\vec{q}%
}F_{\beta }^{s_{1}}\hookrightarrow \dot{E}_{\vec{p}}^{\vec{0},\vec{p}}F_{\beta
}^{s_{1}}=F_{\vec{p},\beta }^{s_{1}}.
\end{equation*}
\end{proof}

\begin{remark}
Our Sobolev embeddings obtained in this section extend and improve the
corresponding results\ of\ \cite{JS07}.
\end{remark}

\section{Jawerth\ and\ Franke\ embeddings}

\subsection{Jawerth embeddings}
The classical Jawerth embedding states that
\begin{equation*}
F_{q,\infty}^{s_{2}}
\hookrightarrow
B_{s,q}^{s_{1}},
\end{equation*}
whenever
\[
s_{1}-\frac{n}{s}
=
s_{2}-\frac{n}{q}
\quad\text{and}\quad
0<q<s<\infty;
\]
see \cite{Ja77}. In this subsection, we extend this embedding to mixed-norm
Herz-type Besov--Triebel--Lizorkin spaces. To this end, we prove a
discrete version of the Jawerth embedding by means of non-increasing
rearrangement techniques.

\begin{definition}
Let $\mu $ be the Lebesgue measure in $\mathbb{R}^{n}$. If $f$ is a
measurable function on $\mathbb{R}^{n}$, we define the non-increasing
rearrangement of $f$ through%
\begin{equation*}
f^{\ast }(t)=\sup \{\lambda >0:m_{f}(\lambda )>t\},\quad t>0,
\end{equation*}%
where $m_{f}$ $(\lambda )=\mu (\{x\in \mathbb{R}^{n}:|f(x)|>\lambda \})$ is
the distribution function of $f$.
\end{definition}

We shall use the following property. If $0<p\leq \infty $, then%
\begin{equation*}
\big\|f\big\|_{p}=\big\|f^{\ast }\big\|_{L^{p}(0,\infty )}
\end{equation*}%
for every measurable function $f$ and 
\begin{equation*}
\Big(\sum\limits_{j=-\infty }^{\infty }2^{j}\left( f^{\ast }(2^{j})\right)
^{p}\Big)^{1/p}\approx \big\|f\big\|_{p}.
\end{equation*}
In the next theorem, we use the notation
\[
\vec r_{j}=(r_{1}^{j},\ldots,r_{n}^{j}),
\qquad r_{i}^{j}\in(0,\infty],
\]
for $j\in\{1,2\}$ and $i\in\{1,\ldots,n\}$.
\begin{theorem}
	\label{Jewarth1}
	Let $s_{1},s_{2}\in\mathbb{R}$,
	$0<\vec q<\vec p<\infty$,
	$0<\vec r_{1},\vec r_{2}\le \infty$,
	$\vec\alpha_{1}=(\alpha_{1}^{1},\ldots,\alpha_{n}^{1})\in\mathbb{R}^{n}$,
	$\vec\alpha_{2}=(\alpha_{1}^{2},\ldots,\alpha_{n}^{2})\in\mathbb{R}^{n}$,
	and $0<\theta\le\infty$.
	Assume that
	\[
	\alpha_{i}^{2}>\alpha_{i}^{1}>-\frac1{p_{i}},
	\qquad i\in \{1,\dots,n\},
	\]
	and
	\[
	s_{1}-\frac1{\mathbf p}-\boldsymbol{\alpha}_{1}
	=
	s_{2}-\frac1{\mathbf q}-\boldsymbol{\alpha}_{2}.
	\]
	Then
	\begin{equation}
	\dot{E}_{\vec q}^{\vec\alpha_{2},\vec r_{2}}
	f_{\theta}^{\,s_{2}}
	\hookrightarrow
	\dot{E}_{\vec p}^{\vec\alpha_{1},\vec r_{1}}
	b_{r_{n}^{2}}^{\,s_{1}} \label{jewarth-discret}
\end{equation}
	holds.
\end{theorem}

\begin{proof}
We divide the proof into three steps.

\medskip

\textit{Step 1. Preparation.}
Let $v\in \mathbb{N}_{0}$ and define
\[
\digamma _{v}
=
2^{v(s_{1}+\frac{n}{2})}
\Big\|
\sum_{m\in \mathbb{Z}^{n}}
\lambda _{v,m}\chi _{v,m}
\Big\|_{\dot{K}_{p_{1}}^{\alpha _{1}^{1},r_{1}^{1}}}.
\]

In this step, we prove that
\begin{equation}
\digamma _{v}
\lesssim
2^{(s_{2}^{2}+\frac{n}{2})v}
\sum_{\tilde m\in\mathbb Z^{\,n-1}}
\Big\|
\sum_{m_{1}\in\mathbb Z}
\lambda_{v,m}\chi_{v,m_{1}}
\Big\|_{\dot K_{q_{1}}^{\alpha_{1}^{2},\infty}}
\chi_{v,\tilde m},
\label{main-est1}
\end{equation}
where $m=(m_{1},\ldots,m_{n})\in\mathbb Z^{n}$ and
$\tilde m=(m_{2},\ldots,m_{n})\in\mathbb Z^{n-1}$.

By the definition of the Herz norm, we have
\begin{align*}
\digamma_v
&\lesssim
2^{v(s_{1}+\frac n2)}
\Bigg(
\sum_{k_{1}=-\infty}^{2-v}
2^{k_{1}\alpha_{1}^{1}r_{1}}
\Big\|
\sum_{m\in\mathbb Z^{n}}
\lambda_{v,m}\chi_{v,m}\chi_{k_{1}}
\Big\|_{L^{p_1}(\mathbb R)}^{r_{1}}
\Bigg)^{1/r_{1}}
\\
&\quad+
2^{v(s_{1}+\frac n2)}
\Bigg(
\sum_{k_{1}=3-v}^{\infty}
2^{k_{1}\alpha_{1}^{1}r_{1}}
\Big\|
\sum_{m\in\mathbb Z^{n}}
\lambda_{v,m}\chi_{v,m}\chi_{k_{1}}
\Big\|_{L^{p_1}(\mathbb R)}^{r_{1}}
\Bigg)^{1/r_{1}}
\\
&=: I_{1,v}+I_{2,v}.
\end{align*}
\textit{Estimation of } $I_{1,v}$. 
Let $x_{1}\in R_{k_{1}}\cap Q_{v,m_{1}}^{1}$, where $k_{1}\leq 2-v$, $m=(m_{1},\ldots,m_{n})\in \mathbb{Z}^{n}$, and $t_{1}>0$. We proceed as in the proof of Theorem \ref{embeddings-sobolev}. We obtain the inequality
\begin{equation*}
\sum_{m_{1}\in \mathbb{Z}} |\lambda_{v,m}|^{t_{1}} \chi_{v,m_{1}}(x_{1})
\leq 2^{v}
\Big\|\sum_{m_{1}\in \mathbb{Z}} \lambda_{v,m}\chi_{v,m_{1}}\chi_{Q_{v}}\Big\|_{L^{t_1}(\mathbb R)}^{t_{1}},
\end{equation*}
where $Q_{v}=(-2^{3-v},2^{3-v})$. This yields
\begin{align*}
& 2^{\alpha_{1}^{1}k_{1}+v\left(s_{1}+\frac{n}{2}\right)}
\Big\|\sum_{m\in \mathbb{Z}^{n}} \lambda_{v,m}\chi_{v,m}\chi_{k_{1}}\Big\|_{L^{p_1}(\mathbb R)} \\
& \lesssim
2^{\left(\alpha_{1}^{1}+\frac{1}{p_{1}}\right)k_{1}
	+v\left(s_{1}+\frac{n}{2}+\frac{1}{t_{1}}\right)}
\sum_{\tilde{m}\in \mathbb{Z}^{n-1}}
\Big\|\sum_{m_{1}\in \mathbb{Z}} \lambda_{v,m}\chi_{v,m_{1}}\chi_{Q_{v}}\Big\|_{L^{t_1}(\mathbb R)}
\chi_{v,\tilde{m}},
\end{align*}
where the implicit constant is independent of $k_{1}$ and $v$. Consequently,
\begin{equation*}
I_{1,v}\lesssim
2^{v\left(s_{1}+\frac{n}{2}+\frac{1}{t_{1}}-\alpha_{1}^{1}-\frac{1}{p_{1}}\right)}
\sum_{\tilde{m}\in \mathbb{Z}^{n-1}}
\Big\|\sum_{m_{1}\in \mathbb{Z}} \lambda_{v,m}\chi_{v,m_{1}}\chi_{Q_{v}}\Big\|_{L^{t_1}(\mathbb R)}
\chi_{v,\tilde{m}}.
\end{equation*}

We may choose $t_{1}>0$ such that
\[
\frac{1}{t_{1}}>\max\left(\frac{1}{q_{1}},\,\frac{1}{q_{1}}+\alpha_{1}^{2}\right).
\]
By Lemma \ref{Lp-estimate} and H\"{o}lder's inequality, with
\[
\frac{1}{d}=\frac{1}{t_{1}}-\frac{1}{q_{1}}-\alpha_{1}^{2},
\]
we obtain
\begin{align*}
\Big\|\sum_{m_{1}\in \mathbb{Z}} \lambda_{v,m}\chi_{v,m_{1}}\chi_{Q_{v}}\Big\|_{L^{t_1}(\mathbb R)}^{\tau}
&\leq \sum_{h\leq -v}
\Big\|\sum_{m_{1}\in \mathbb{Z}} \lambda_{v,m}\chi_{v,m_{1}}\chi_{h+3}\Big\|_{L^{t_1}(\mathbb R)}^{\tau} \\
&\lesssim \sum_{h\leq -v}
2^{h\left(\frac{1}{d}+\alpha_{1}^{2}\right)\tau}
\Big\|\sum_{m_{1}\in \mathbb{Z}} \lambda_{v,m}\chi_{v,m_{1}}\chi_{h+3}\Big\|_{L^{q_1}(\mathbb R)}^{\tau} \\
&\lesssim
2^{-\frac{v}{d}\tau}
\Big\|\sum_{m_{1}\in \mathbb{Z}} \lambda_{v,m}\chi_{v,m_{1}}\Big\|_{\dot{K}_{q_{1}}^{\alpha_{1}^{2},\infty}}^{\tau},
\end{align*}

where  $\tau=\min(1,t_{1})$. Hence,
\begin{equation*}
I_{1,v}\lesssim
2^{v\left(s_{2}^{2}+\frac{n}{2}\right)}
\sum_{\tilde{m}\in \mathbb{Z}^{n-1}}
\Big\|\sum_{m_{1}\in \mathbb{Z}} \lambda_{v,m}\chi_{v,m_{1}}\Big\|_{\dot{K}_{q_{1}}^{\alpha_{1}^{2},\infty}}
\chi_{v,\tilde{m}}.
\end{equation*}
\textit{Estimation of }$I_{2,v}$\textit{.}
Observe that $\alpha_{1}^{2}>\alpha_{1}^{1}$ and
\[
s_{1}
=s_{2}^{2}-\mathbf{\alpha}_{1}^{2}
+\mathbf{\alpha}_{1}^{1}
+\frac{1}{p_{1}}-\frac{1}{q_{1}}.
\]
Hence,
\begin{align*}
&2^{v(s_{1}+\frac{n}{2})}
\Bigg(
\sum_{k_{1}=3-v}^{\infty}
2^{k_{1}\alpha_{1}^{1}r_{1}}
\Big\|
\sum_{m\in\mathbb Z^{n}}
\lambda_{v,m}\chi_{v,m}\chi_{k_{1}}
\Big\|_{L^{p_1}(\mathbb R)}^{r_{1}}
\Bigg)^{1/r_{1}}
\\
&=
2^{v(s_{2}^{2}+\frac{n}{2}+\frac{1}{p_{1}}-\frac{1}{q_{1}})}
\Bigg(
\sum_{k_{1}=3-v}^{\infty}
2^{(k_{1}+v)(\alpha_{1}^{1}-\alpha_{1}^{2})r_{1}}
\,2^{k_{1}\alpha_{1}^{2}r_{1}}
\Big\|
\sum_{m\in\mathbb Z^{n}}
\lambda_{v,m}\chi_{v,m}\chi_{k_{1}}
\Big\|_{L^{p_1}(\mathbb R)}^{r_{1}}
\Bigg)^{1/r_{1}} .
\end{align*}
Since $\alpha_{1}^{1}-\alpha_{1}^{2}<0$ and $k_{1}+v\ge 3$, we have
\[
2^{(k_{1}+v)(\alpha_{1}^{1}-\alpha_{1}^{2})}
\le
2^{3(\alpha_{1}^{1}-\alpha_{1}^{2})},
\]
and therefore
\begin{align*}
&2^{v(s_{1}+\frac{n}{2})}
\Bigg(
\sum_{k_{1}=3-v}^{\infty}
2^{k_{1}\alpha_{1}^{1}r_{1}}
\Big\|
\sum_{m\in\mathbb Z^{n}}
\lambda_{v,m}\chi_{v,m}\chi_{k_{1}}
\Big\|_{L^{p_1}(\mathbb R)}^{r_{1}}
\Bigg)^{1/r_{1}}
\\
&\lesssim
2^{v(s_{2}^{2}+\frac{n}{2}+\frac{1}{p_{1}}-\frac{1}{q_{1}})}
\sup_{k_{1}\ge 3-v}
2^{k_{1}\alpha_{1}^{2}}
\Big\|
\sum_{m\in\mathbb Z^{n}}
\lambda_{v,m}\chi_{v,m}\chi_{k_{1}}
\Big\|_{L^{p_1}(\mathbb R)}.
\end{align*}
Let $m=(m_{1},\ldots,m_{n})\in \mathbb{Z}^{n}$ and
$\tilde m=(m_{2},\ldots,m_{n})\in \mathbb{Z}^{n-1}$. We have
\begin{equation*}
|2^{-v}m_{1}|
\leq |x_{1}-2^{-v}m_{1}|+|x_{1}|
\leq 2^{-v}+2^{k_{1}}
\leq 2^{k_{1}+1},
\end{equation*}
and
\begin{equation*}
|2^{-v}m_{1}|
\geq \bigl||x_{1}-2^{-v}m_{1}|-|x_{1}|\bigr|
\geq 2^{k_{1}-1}-2^{-v}
\geq 2^{k_{1}-2},
\end{equation*}
if $x_{1}\in R_{k_{1}}\cap Q_{v,m_{1}}^{1}$ and $k_{1}\geq 3-v$.
Hence,
\begin{equation*}
m_{1}\in
A_{k_{1}+v}^{1}
=
\Bigl\{
m_{1}\in\mathbb Z:
2^{k_{1}+v-2}\le |m_{1}|
\le 2^{k_{1}+v+1}
\Bigr\}.
\end{equation*}

Observe that
\begin{equation*}
\operatorname{card}\bigl(A_{k_{1}+v}^{1}\bigr)
\le C\,2^{k_{1}+v}.
\end{equation*}

We put
\begin{equation*}
\widetilde{\lambda}_{v,\widetilde m_{1}^{1}}^{1,k_{1}}
=
\max_{m^{1}\in A_{k_{1}+v}^{1}}
\bigl|
\lambda_{v,(m^{1},m_{2},\ldots,m_{n})}
\bigr|,
\end{equation*}
where
$\widetilde m_{1}^{1}
=(m_{1}^{1},m_{2},\ldots,m_{n})$
with $m_{1}^{1}\in A_{k_{1}+v}^{1}$.

Furthermore, for $j\ge2$, we define
\begin{equation*}
\widetilde{\lambda}_{v,\widetilde m_{1}^{j}}^{j,k_{1}}
=
\max_{m^{l}\in A_{k_{1}+v}^{1},\,l=1,\ldots,j}
\sum_{h=1}^{j}
\bigl|
\lambda_{v,(m^{h},m_{2},\ldots,m_{n})}
\bigr|
-
\sum_{h=1}^{j-1}
\widetilde{\lambda}_{v,\widetilde m_{1}^{h}}^{h,k_{1}},
\end{equation*}
where
$\widetilde m_{1}^{j}
=(m_{1}^{j},m_{2},\ldots,m_{n})\in\mathbb Z^{n}$
and $m_{1}^{j}\in A_{k_{1}+v}^{1}$.

Then
\begin{equation*}
\sum_{m_{1}\in A_{k_{1}+v}^{1}}
|\lambda_{v,m}|\chi_{v,m_{1}}
=
\sum_{h=1}^{\operatorname{card}(A_{k_{1}+v}^{1})}
\widetilde{\lambda}_{v,\widetilde m_{1}^{h}}^{h,k_{1}}
\chi_{v,m_{1}^{h}}
=: \varpi_{v,k_{1},\widetilde m}.
\end{equation*}

It is not difficult to see that
\begin{equation*}
\varpi_{v,k_{1},\widetilde m}^{*}(t)
=
\sum_{h=1}^{\operatorname{card}(A_{k_{1}+v}^{1})}
\widetilde{\lambda}_{v,\widetilde m_{1}^{h}}^{h,k_{1}}
\chi_{[B_{h-1,v},B_{h,v})}(t),
\end{equation*}
where
\begin{equation*}
B_{0,v}=0,
\qquad
B_{h,v}
=
\sum_{j=1}^{h}|Q_{v,m_{1}^{j}}^{1}|
=
2^{-v}h,
\qquad
h\in\{1,\ldots,\operatorname{card}(A_{k_{1}+v}^{1})\},
\end{equation*}
and $\chi_{[B_{h-1,v},B_{h,v})}$ denotes the characteristic
function of the interval $[B_{h-1,v},B_{h,v})$.

Moreover,
\begin{equation*}
Q_{v,m_{1}}^{1}\subset \breve R_{k_{1}}
\qquad\text{if}\qquad
k_{1}\ge 3-v
\quad\text{and}\quad
m_{1}\in A_{k_{1}+v}^{1},
\end{equation*}
where
\begin{equation*}
\breve R_{k_{1}}
=
\bigcup_{i=-2}^{3}R_{k_{1}+i},
\end{equation*}
and
\begin{equation*}
\varpi_{v,k_{1},\widetilde m}
\le
\sum_{m_{1}\in\mathbb Z}
|\lambda_{v,m}|
\,\chi_{v,m_{1}}
\,\chi_{\breve R_{k_{1}}}.
\end{equation*}
Since $\varpi _{v,k_{1},\tilde{m}}^{*}$ is constant on $[0,2^{-v})$, we
have
\begin{align}
\Big\|\sum_{m_{1}\in \mathbb{Z}}\lambda _{v,m}\chi _{v,m_{1}}\chi _{k_{1}}
\Big\|_{L^{p_1}(\mathbb R)}^{p_{1}}
& \leq \int_{0}^{\infty}
\bigl(\varpi _{v,k_{1},\tilde{m}}^{*}(y)\bigr)^{p_{1}}\,dy
\notag\\
& =
\int_{0}^{2^{-v}}
\bigl(\varpi _{v,k_{1},\tilde{m}}^{*}(y)\bigr)^{p_{1}}\,dy
+\int_{2^{-v}}^{\infty}
\bigl(\varpi _{v,k_{1},\tilde{m}}^{*}(y)\bigr)^{p_{1}}\,dy
\notag\\
& \lesssim
2^{-v}
\bigl(\varpi _{v,k_{1},\tilde{m}}^{*}(2^{-v-1})\bigr)^{p_{1}}
+\int_{2^{-v}}^{\infty}
\bigl(\varpi _{v,k_{1},\tilde{m}}^{*}(y)\bigr)^{p_{1}}\,dy,
\label{corr1-lorentz}
\end{align}
where the implicit constant is independent of $v$ and $k_{1}$.

By the monotonicity of $\varpi _{v,k_{1},\tilde{m}}^{*}$, we obtain
\begin{align}
\int_{2^{-v}}^{\infty}
\bigl(\varpi _{v,k_{1},\tilde{m}}^{*}(y)\bigr)^{p_{1}}\,dy
&=
\sum_{l=0}^{\infty}
\int_{2^{\,l-v}}^{2^{\,l-v+1}}
\bigl(\varpi _{v,k_{1},\tilde{m}}^{*}(y)\bigr)^{p_{1}}\,dy
\notag\\
&\le
\sum_{l=0}^{\infty}
2^{\,l-v}
\bigl(\varpi _{v,k_{1},\tilde{m}}^{*}(2^{\,l-v})\bigr)^{p_{1}}.
\label{corr2-lorentz}
\end{align}

Combining \eqref{corr1-lorentz} and \eqref{corr2-lorentz}, we obtain
\begin{equation}
\Big\|\sum_{m_{1}\in \mathbb{Z}}
\lambda _{v,m}\chi _{v,m_{1}}\chi _{k_{1}}
\Big\|_{L^{p_1}(\mathbb R)}^{p_{1}}
\lesssim
\sum_{l=0}^{\infty}
2^{\,l-v}
\Bigl(
\varpi _{v,k_{1},\tilde{m}}^{*}(2^{\,l-v-1})
\Bigr)^{p_{1}}.
\label{corr4.1-lorentz}
\end{equation}

Using the embedding $\ell^{q_{1}}\hookrightarrow \ell^{p_{1}}$,
which holds since $q_{1}<p_{1}$, we infer from
\eqref{corr4.1-lorentz} that
\begin{align}
&\sum_{l=0}^{\infty}
2^{\,l-v}
\Bigl(
\varpi _{v,k_{1},\tilde{m}}^{*}(2^{\,l-v-1})
\Bigr)^{p_{1}}
\notag\\
&\qquad\lesssim
\Biggl(
\sum_{l=0}^{\infty}
2^{(l-v)\frac{q_{1}}{p_{1}}}
\Bigl(
\varpi _{v,k_{1},\tilde{m}}^{*}(2^{\,l-v-1})
\Bigr)^{q_{1}}
\Biggr)^{p_{1}/q_{1}}
\notag\\
&\qquad=
2^{v(\frac1{q_{1}}-\frac1{p_{1}})p_{1}}
\Biggl(
\sum_{j=-v}^{\infty}
2^{(j+v)(\frac1{p_{1}}-\frac1{q_{1}})q_{1}}
\,2^{j}
\Bigl(
\varpi _{v,k_{1},\tilde{m}}^{*}(2^{j-1})
\Bigr)^{q_{1}}
\Biggr)^{p_{1}/q_{1}}.
\label{corr5-lorentz}
\end{align}

Since $q_{1}<p_{1}$, we have
\begin{align}
\eqref{corr5-lorentz}
&\lesssim
2^{v(\frac1{q_{1}}-\frac1{p_{1}})p_{1}}
\Biggl(
\sum_{j=-\infty}^{\infty}
2^{j}
\Bigl(
\varpi _{v,k_{1},\tilde{m}}^{*}(2^{j-1})
\Bigr)^{q_{1}}
\Biggr)^{p_{1}/q_{1}}
\notag\\
&\approx
2^{v(\frac1{q_{1}}-\frac1{p_{1}})p_{1}}
\|\varpi _{v,k_{1},\tilde{m}}\|_{L^{q_1}(\mathbb R)}^{p_{1}}
\notag\\
&\lesssim
2^{v(\frac1{q_{1}}-\frac1{p_{1}})p_{1}}
\Bigl\|
\sum_{m_{1}\in\mathbb Z}
\lambda _{v,m}
\chi _{v,m_{1}}
\chi _{\breve R_{k_{1}}}
\Bigr\|_{L^{q_1}(\mathbb R)}^{p_{1}}
\label{jewarth-main3}
\\
&\lesssim
2^{v(\frac1{q_{1}}-\frac1{p_{1}})p_{1}
	-k_{1}\alpha _{1}^{2}p_{1}}
\Bigl\|
\sum_{m_{1}\in\mathbb Z}
\lambda _{v,m}\chi _{v,m_{1}}
\Bigr\|_{\dot K_{q_{1}}^{\alpha _{1}^{2},\infty}}^{p_{1}}.
\notag
\end{align}

Consequently,
\begin{equation*}
I_{2,v}
\lesssim
2^{v(s_{2}^{2}+\frac n2)}
\sum_{\tilde m\in\mathbb Z^{\,n-1}}
\Bigl\|
\sum_{m_{1}\in\mathbb Z}
\lambda _{v,m}\chi _{v,m_{1}}
\Bigr\|_{\dot K_{q_{1}}^{\alpha _{1}^{2},\infty}}
\chi _{v,\tilde m}.
\end{equation*}

Collecting the estimates for $I_{1,v}$ and $I_{2,v}$, we obtain
\eqref{main-est1}.

\textit{Step 2.} Let $v\in \mathbb{N}_{0}$. We set
\begin{equation*}
\Omega _{v}=2^{(s_{2}^{2}+\frac{n}{2})v}
\Big\|
\sum_{\tilde{m}\in \mathbb{Z}^{n-1}}
\Big\|
\sum_{m_{1}\in \mathbb{Z}}
\lambda _{v,m}\chi _{v,m_{1}}
\Big\|_{\dot{K}_{q_{1}}^{\alpha _{1}^{2},\infty }}
\chi _{v,\tilde{m}}
\Big\|_{\dot{K}_{p_{2}}^{\alpha _{2}^{1},r_{2}^{1}}}.
\end{equation*}
In this step, we prove that
\begin{equation}
\Omega _{v}\lesssim
2^{(s_{2}^{3}+\frac{n}{2})v}
\sum_{\bar{m}\in \mathbb{Z}^{n-2}}
\Big\|
\Big\|
\sum_{\check{m}\in \mathbb{Z}^{2}}
\lambda _{v,m}\chi _{v,\check{m}}
\Big\|_{\dot{K}_{q_{1}}^{\alpha _{1}^{2},\infty }}
\Big\|_{\dot{K}_{q_{2}}^{\alpha _{2}^{2},\infty }}
\chi _{v,\bar{m}},
\label{main-est2}
\end{equation}
where $m=(m_{1},\ldots,m_{n})$, $\check{m}=(m_{1},m_{2})$, and
$\bar{m}=(m_{3},\ldots,m_{n})$. Recall that,
\begin{equation*}
s_{2}^{2}
=
s_{2}^{3}
-\mathbf{\alpha }_{2}^{2}
+\mathbf{\alpha }_{2}^{1}
+\frac{1}{p_{2}}
-\frac{1}{q_{2}}.
\end{equation*}
Define
\begin{equation*}
\tilde{\lambda}_{v,\tilde{m}}
=
\Big\|
\sum_{m_{1}\in \mathbb{Z}}
\lambda_{v,m}\chi_{v,m_{1}}
\Big\|_{\dot{K}_{q_{1}}^{\alpha_{1}^{2},\infty}},
\qquad
\tilde{m}=(m_{2},\ldots,m_{n})\in\mathbb{Z}^{n-1}.
\end{equation*}
Then estimate \eqref{main-est2} follows by repeating the arguments of
Step~1 with $\tilde{\lambda}_{v,\tilde{m}}$ in place of
$\lambda_{v,m}$.

\textit{Step 3.} We now prove \eqref{jewarth-discret}. Let
\(
\lambda\in \dot E_{\vec q}^{\vec\alpha_2,\vec r_2}
f_{\infty}^{s_2}
\)
and let \(v\in\mathbb N_0\). Define
\[
V_v
=
2^{v(s_2^2+\frac n2)}
\sum_{\tilde m\in\mathbb Z^{n-1}}
\Big\|
\sum_{m_1\in\mathbb Z}
\lambda_{v,m}\chi_{v,m_1}
\Big\|_{\dot K_{q_1}^{\alpha_1^2,\infty}}
\chi_{v,\tilde m}.
\]
By Step 1,
\[
\digamma_v\lesssim V_v,
\qquad v\in\mathbb N_0.
\]

Let \(j\in\{2,\ldots,n-1\}\). By induction on \(j\), with the help of Step 2, we obtain
\[
\big\|\cdots\|V_v\|_{\dot K_{p_2}^{\alpha_2^1,r_2^1}}
\cdots\big\|_{\dot K_{p_j}^{\alpha_j^1,r_j^1}}
\]
\[
\lesssim
2^{(s_2^{j+1}+\frac n2)v}
\sum_{\tilde m_j\in\mathbb Z^{\,n-j}}
\Big\|
\cdots
\Big\|
\sum_{\bar m_j\in\mathbb Z^j}
\lambda_{v,m}\chi_{v,\bar m_j}
\Big\|_{\dot K_{q_1}^{\alpha_1^2,\infty}}
\cdots
\Big\|_{\dot K_{q_j}^{\alpha_j^2,\infty}}
\chi_{v,\tilde m_j},
\]
where
\[
m=(m_1,\ldots,m_n),\qquad
\bar m_j=(m_1,\ldots,m_j),\qquad
\tilde m_j=(m_{j+1},\ldots,m_n).
\]

Taking \(j=n-1\), we arrive at
\[
\big\|\cdots\|V_v\|_{\dot K_{p_2}^{\alpha_2^1,r_2^1}}
\cdots\big\|_{\dot K_{p_{n-1}}^{\alpha_{n-1}^1,r_{n-1}^1}}
\]
\[
\lesssim
2^{(s_2^n+\frac n2)v}
\sum_{m_n\in\mathbb Z}
\Big\|
\cdots
\Big\|
\sum_{\bar m_{n-1}\in\mathbb Z^{n-1}}
\lambda_{v,m}\chi_{v,\bar m_{n-1}}
\Big\|_{\dot K_{q_1}^{\alpha_1^2,\infty}}
\cdots
\Big\|_{\dot K_{q_{n-1}}^{\alpha_{n-1}^2,\infty}}
\chi_{v,m_n},
\]
with an implicit constant independent of \(v\).

Set
\[
\widehat\lambda_{v,m_n}
=
\Big\|
\cdots
\Big\|
\sum_{\bar m\in\mathbb Z^{n-1}}
\lambda_{v,m}\chi_{v,\bar m}
\Big\|_{\dot K_{q_1}^{\alpha_1^2,\infty}}
\cdots
\Big\|_{\dot K_{q_{n-1}}^{\alpha_{n-1}^2,\infty}},
\]
where \(\bar m=(m_1,\ldots,m_{n-1})\).

Since \(\digamma_v\lesssim V_v\), we obtain
\[
\|\lambda\|_{\dot E_{\vec p}^{\vec\alpha_1,\vec r_1}
	b_{r_n^2}^{s_1}}^{\,r_n^2}
\lesssim
\sum_{v=0}^{\infty}
\Big\|
\cdots
\|V_v\|_{\dot K_{p_2}^{\alpha_2^1,r_2^1}}
\cdots
\Big\|_{\dot K_{p_n}^{\alpha_n^1,r_n^1}}^{\,r_n^2}.
\]
Hence,
\begin{align*}
\|\lambda\|_{\dot E_{\vec p}^{\vec\alpha_1,\vec r_1}
	b_{r_n^2}^{s_1}}^{\,r_n^2}
&\lesssim
\sum_{v=0}^{\infty}
\Big(
\sum_{k_n=-\infty}^{\infty}
2^{(s_2^n+\frac n2)vr_n^1+k_n\alpha_n^1r_n^1}
\Big\|
\sum_{m_n\in\mathbb Z}
\widehat\lambda_{v,m_n}
\chi_{v,m_n}\chi_{k_n}
\Big\|_{p_n}^{r_n^1}
\Big)^{r_n^2/r_n^1}
\\
&=
c\,
\|\widehat\lambda\|_{
	\dot K_{p_n}^{\alpha_n^1,r_n^1}
	b_{r_n^2}^{s_2^n}
}^{\,r_n^2},
\end{align*}
where
\[
\widehat\lambda
=
\{\widehat\lambda_{v,m_n}\}_{v\in\mathbb N_0,\;m_n\in\mathbb Z}.
\]

By the one-dimensional Jawerth embedding,
\[
\dot K_{q_n}^{\alpha_n^2,r_n^2}
f_\infty^{s_2}
\hookrightarrow
\dot K_{p_n}^{\alpha_n^1,r_n^1}
b_{r_n^2}^{s_2^n},
\]
see \cite{drihem2016jawerth,Drihem-lorentz-b}, we get
\[
\|\lambda\|_{\dot E_{\vec p}^{\vec\alpha_1,\vec r_1}
	b_{r_n^2}^{s_1}}
\lesssim
\|\widehat\lambda\|_{
	\dot K_{q_n}^{\alpha_n^2,r_n^2}
	f_\infty^{s_2}}
\lesssim\,
\|\lambda\|_{
	\dot E_{\vec q}^{\vec\alpha_2,\vec r_2}
	f_\infty^{s_2}}.
\]

This proves \eqref{jewarth-discret}.
\end{proof}

Now, we deal with the case $\alpha _{2}^{i}=\alpha _{i}^{1},i\in
\{1,...,n\}. $

\begin{theorem}
	\label{Jewarth2}
	Let $s_{1}, s_{2} \in \mathbb{R}$, $0 < \vec{q} < \vec{p} < \infty$, $0 < \vec{r} \leq \infty$, 
	$\vec{\alpha} = (\alpha_1, \ldots, \alpha_n) \in \mathbb{R}^n$, and $0 < \theta \leq \infty$. 
	Assume that
	\[
	\alpha_i > -\frac{1}{p_i}, \quad i\in \{1,\dots,n\},
	\]
	and
	\begin{equation*}
	s_{1} - \frac{1}{\mathbf{p}} = s_{2} - \frac{1}{\mathbf{q}}.
	\end{equation*}
	Then
	\begin{equation}
	\dot{E}_{\vec{q}}^{\vec{\alpha},\vec{r}} f_{\theta}^{s_{2}}
	\hookrightarrow
	\dot{E}_{\vec{p}}^{\vec{\alpha},\vec{r}} b_{\max(r_n^{2}, q_n)}^{s_{1}}.
	\label{jewarth-discret1}
	\end{equation}
\end{theorem}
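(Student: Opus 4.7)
The plan is to mirror the three-step proof of Theorem \ref{Jewarth1}: first invoke Theorem \ref{phi-tran} to reduce to the discrete embedding $\dot E_{\vec q}^{\vec\alpha,\vec r}f_\theta^{s_2}\hookrightarrow \dot E_{\vec p}^{\vec\alpha,\vec r}b_{\max(r_n,q_n)}^{s_1}$, then run an induction over coordinates that replaces, one by one, the $\dot K_{p_i}^{\alpha_i,r_i}$-norm by a $\dot K_{q_i}^{\alpha_i,\infty}$-norm, and finally apply a one-dimensional Jawerth embedding on Herz spaces in the $n$-th coordinate.

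For the first $n-1$ coordinates, I intend to rerun the rearrangement argument of Steps 1 and 2 of the proof of Theorem \ref{Jewarth1} with $\alpha_i^2=\alpha_i^1=\alpha_i$. Inspection shows that the strict gap $\alpha_i^2>\alpha_i^1$ used there enters only through the auxiliary inequality $1/d+\alpha_i^2>0$ with $1/d=1/t_i-1/q_i-\alpha_i^2$, which in the equal-exponent case collapses to $1/t_i>1/q_i$ --- still freely achievable by choosing $t_i<q_i$ small enough. The condition $\alpha_i+1/p_i>0$ then enters when bounding $I_{2,v}$ via the embedding $\ell^{q_i}\hookrightarrow\ell^{p_i}$ together with the nonincreasing rearrangement, exactly as in \eqref{jewarth-main3}. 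Iterating these reductions over $i=1,\dots,n-1$ should yield
\[
\|\lambda\|_{\dot E_{\vec p}^{\vec\alpha,\vec r}b_{\max(r_n,q_n)}^{s_1}}\lesssim \|\hat\lambda\|_{\dot K_{p_n}^{\alpha_n,r_n}b_{\max(r_n,q_n)}^{s_2^n}},
\]
where $\hat\lambda_{v,m_n}=\big\|\cdots\big\|\sum_{\bar m\in\mathbb Z^{n-1}}\lambda_{v,m}\chi_{v,\bar m}\big\|_{\dot K_{q_1}^{\alpha_1,\infty}}\cdots\big\|_{\dot K_{q_{n-1}}^{\alpha_{n-1},\infty}}$, reducing everything to a one-dimensional assertion.

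In the $n$-th coordinate, the plan is to invoke the one-dimensional Jawerth embedding on Herz spaces in the equal-Herz-exponent case,
\[
\dot K_{q_n}^{\alpha_n,r_n}f_\infty^{s_2}\hookrightarrow \dot K_{p_n}^{\alpha_n,r_n}b_{\max(r_n,q_n)}^{s_2^n},
\]
which refines the strict-gap 1D statement used in Step 3 of the proof of Theorem \ref{Jewarth1} and is available via the arguments in \cite{drihem2016jawerth,Drihem-lorentz-b}. Combining it with the trivial monotonicity $f_\theta^{s_2}\hookrightarrow f_\infty^{s_2}$ coming from Theorem \ref{embeddings1.1} then completes the proof. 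The main obstacle is precisely this 1D step: in the strict case $\alpha^2>\alpha^1$ a geometric-series absorption permits the fine index $r_n$ to be preserved in the target $b$-space, whereas for $\alpha^2=\alpha^1$ the $\ell^{q_n}$-loss produced by the rearrangement cannot be absorbed, forcing the weaker target index $\max(r_n,q_n)$; verifying that this worsened fine index does not degrade further across the preceding $n-1$ inductive reductions --- that is, that the inner $\dot K_{q_i}^{\alpha_i,\infty}$-quasi-norms are robust enough to support the outer $\ell^{\max(r_n,q_n)}$-summability in $v$ --- is the delicate point.
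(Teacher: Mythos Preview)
Your overall strategy---iterate a one-variable reduction over the first $n-1$ coordinates and then invoke the one-dimensional Herz--Jawerth embedding in the $n$-th coordinate---is exactly the paper's approach. But your inspection of where the strict gap $\alpha_i^2>\alpha_i^1$ is used in the proof of Theorem~\ref{Jewarth1} is incomplete, and this leads to a wrong intermediate claim. In the estimate of $I_{2,v}$ there, the first move is to rewrite $2^{k_1\alpha_1^1 r_1}=2^{(k_1+v)(\alpha_1^1-\alpha_1^2)r_1}2^{k_1\alpha_1^2 r_1}2^{-v(\alpha_1^1-\alpha_1^2)r_1}$ and then use the geometric decay of $2^{(k_1+v)(\alpha_1^1-\alpha_1^2)r_1}$ over $k_1\ge 3-v$ to collapse the $\ell^{r_1}$-sum in $k_1$ to a supremum; only after that does one apply the rearrangement estimate~\eqref{jewarth-main3} to a single $k_1$ and pass to $\|\cdot\|_{\dot K_{q_1}^{\alpha_1^2,\infty}}$. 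When $\alpha_i^2=\alpha_i^1$ that decay vanishes and the $\ell^{r_1}\to\ell^\infty$ step simply fails (take any sequence with $2^{k_1\alpha_1}\|\cdot\,\chi_{k_1}\|_{p_1}$ constant in $k_1$), so you cannot obtain inner norms $\dot K_{q_i}^{\alpha_i,\infty}$ as you state.

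The fix---and this is what the paper does---is to apply the rearrangement bound~\eqref{jewarth-main3} \emph{inside} the $\ell^{r_1}$-sum over $k_1$, obtaining
\[
2^{k_1\alpha_1}\Big\|\sum_{m_1}\lambda_{v,m}\chi_{v,m_1}\chi_{k_1}\Big\|_{p_1}\lesssim 2^{v(\frac{1}{q_1}-\frac{1}{p_1})}2^{k_1\alpha_1}\Big\|\sum_{m_1}|\lambda_{v,m}|\chi_{v,m_1}\chi_{\breve R_{k_1}}\Big\|_{q_1},
\]
and then summing in $\ell^{r_1}$ over $k_1$ to produce the $\dot K_{q_1}^{\alpha_1,r_1}$-norm (not $\infty$). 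Iterating gives $\hat\lambda_{v,m_n}$ built from $\dot K_{q_i}^{\alpha_i,r_i}$-norms, $i=1,\dots,n-1$. Since these reductions are pointwise in $v$, there is no interaction with the outer fine index, and your ``delicate point'' evaporates: one takes the $\ell^{\max(r_n,q_n)}$-sum in $v$ only at the very end and closes with the one-dimensional embedding $\dot K_{q_n}^{\alpha_n,r_n}f_\infty^{s_2}\hookrightarrow \dot K_{p_n}^{\alpha_n,r_n}b_{\max(r_n,q_n)}^{s_2+1/p_n-1/q_n}$ exactly as you propose.
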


\begin{proof}
We decompose the proof into three steps.

\textit{Step 1.} We put $r=\max (r_n^{2},q_{n})$. Let $v\in \mathbb{N}_{0}$.
We employ the notation of Step 1 of Theorem \ref{Jewarth1}. We will prove that
\begin{equation}
\digamma _{v}\lesssim 2^{\left(s_{2}+\frac{n}{2}+\frac{1}{\mathbf{p}_{2}}-\frac{1}{\mathbf{q}_{2}}\right)v}
\sum_{\tilde{m}\in \mathbb{Z}^{n-1}}
\Big\|
\sum_{m_{1}\in \mathbb{Z}}\lambda _{v,m}\chi _{v,m_{1}}
\Big\|_{\dot{K}_{q_{1}}^{\alpha_{1},r_{1}}}
\chi _{v,\tilde{m}}.
\label{main-est3}
\end{equation}

In view of the proof of Theorem \ref{Jewarth1}, we estimate only $I_{2,v}$. We have
\begin{equation*}
\Big\|
\sum_{m_{1}\in \mathbb{Z}}\lambda _{v,m}\chi _{v,m_{1}}\chi _{k_{1}}
\Big\|_{L^{p_1}(\mathbb R)}^{p_{1}}
\lesssim
2^{v\left(\frac{1}{q_{1}}-\frac{1}{p_{1}}\right)}
\Big\|
\sum_{m_{1}\in \mathbb{Z}}\lambda _{v,m}\chi _{v,m_{1}}\chi _{\breve{R}_{k_{1}}}
\Big\|_{L^{q_1}(\mathbb R)}^{p_{1}},
\end{equation*}
where the implicit constant is independent of $v$; see \eqref{jewarth-main3}.
This yields \eqref{main-est3}.

\textit{Step 2.} Let $v\in \mathbb{N}_{0}$. We set
\begin{equation*}
\Omega _{v}
=
2^{\left(s_{2}+\frac{n}{2}+\frac{1}{\mathbf{p}_{2}}-\frac{1}{\mathbf{q}_{2}}\right)v}
\Big\|
\sum_{\tilde{m}\in \mathbb{Z}^{n-1}}
\Big\|
\sum_{m_{1}\in \mathbb{Z}}\lambda _{v,m}\chi _{v,m_{1}}
\Big\|_{\dot{K}_{q_{1}}^{\alpha_{1},r_{1}}}
\chi _{v,\tilde{m}}
\Big\|_{\dot{K}_{p_{2}}^{\alpha_{2},r_{2}}}.
\end{equation*}

The arguments used in Step 1 can be applied to show that
\begin{equation*}
\Omega _{v}
\lesssim
2^{\left(s_{2}+\frac{n}{2}+\frac{1}{\mathbf{p}_{3}}-\frac{1}{\mathbf{q}_{3}}\right)v}
\sum_{\bar{m}\in \mathbb{Z}^{n-2}}
\Big\|
\Big\|
\sum_{\check{m}\in \mathbb{Z}^{2}}
\lambda _{v,m}\chi _{v,\check{m}}
\Big\|_{\dot{K}_{q_{1}}^{\alpha_{1},r_{1}}}
\Big\|_{\dot{K}_{q_{2}}^{\alpha_{2},r_{2}}}
\chi _{v,\bar{m}},
\end{equation*}
where $m=(m_{1},\ldots,m_{n})$, $\check{m}=(m_{1},m_{2})$, and $\bar{m}=(m_{3},\ldots,m_{n})$.

\textit{Step 3.} In this step, we prove \eqref{jewarth-discret1}. Let
$\lambda \in \dot{E}_{\vec{q}}^{\vec{\alpha},\vec{r}} f_{\theta}^{s_{2}}$
and $v \in \mathbb{N}_{0}$. We set
\begin{equation*}
V_{v}
=2^{v\left(s_{2}+\frac{n}{2}+\frac{1}{\mathbf{p}_{2}}-\frac{1}{\mathbf{q}_{2}}\right)}
\sum_{\tilde{m}\in \mathbb{Z}^{n-1}}
\Big\|
\sum_{m_{1}\in \mathbb{Z}} \lambda_{v,m}\chi_{v,m_{1}}
\Big\|_{\dot{K}_{q_{1}}^{\alpha_{1},r_{1}}}
\chi_{v,\tilde{m}}.
\end{equation*}

Let $j\in \{2,\dots,n-1\}$. By induction on $j$, with the help of Step 2 we obtain that
\begin{equation*}
\Big\|
\cdots \big\| V_{v} \big\|_{\dot{K}_{p_{2}}^{\alpha_{2},r_{2}}}
\cdots
\Big\|_{\dot{K}_{p_{j}}^{\alpha_{j},r_{j}}}
\end{equation*}
can be estimated from above by
\begin{equation*}
C\,2^{\left(s_{2}+\frac{n}{2}+\frac{1}{\mathbf{p}_{j+1}}-\frac{1}{\mathbf{q}_{j+1}}\right)v}
\sum_{\tilde{m}_{j}\in \mathbb{Z}^{n-j}}
\Big\|
\cdots
\Big\|
\sum_{\bar{m}_{j}\in \mathbb{Z}^{j}} \lambda_{v,m}\chi_{v,\bar{m}_{j}}
\Big\|_{\dot{K}_{q_{1}}^{\alpha_{1},r_{1}}}
\cdots
\Big\|_{\dot{K}_{q_{j}}^{\alpha_{j},r_{j}}}
\chi_{v,\tilde{m}_{j}},
\end{equation*}
where $m=(m_{1},\dots,m_{n})$, $\bar{m}_{j}=(m_{1},\dots,m_{j})$, and
$\tilde{m}_{j}=(m_{j+1},\dots,m_{n})$.

Thus,
\begin{equation*}
\Big\|
\cdots \big\| V_{v} \big\|_{\dot{K}_{p_{2}}^{\alpha_{2},r_{2}}}
\cdots
\Big\|_{\dot{K}_{p_{n-1}}^{\alpha_{n-1},r_{n-1}}}
\end{equation*}
can be estimated from above by
\begin{equation*}
C\,2^{\left(s_{2}+\frac{n}{2}+\frac{1}{p_{n}}-\frac{1}{q_{n}}\right)v}
\sum_{m_{n}\in \mathbb{Z}}
\Big\|
\cdots
\Big\|
\sum_{\bar{m}_{n-1}\in \mathbb{Z}^{n-1}} \lambda_{v,m}\chi_{v,\bar{m}_{n-1}}
\Big\|_{\dot{K}_{q_{1}}^{\alpha_{1},r_{1}}}
\cdots
\Big\|_{\dot{K}_{q_{n-1}}^{\alpha_{n-1},r_{n-1}}}
\chi_{v,m_{n}},
\end{equation*}
where the constant $C>0$ is independent of $v$.

We define
\begin{equation*}
\hat{\lambda}_{v,m_{n}}
=
\Big\|
\cdots
\Big\|
\sum_{\bar{m}\in \mathbb{Z}^{n-1}} \lambda_{v,m}\chi_{v,\bar{m}}
\Big\|_{\dot{K}_{q_{1}}^{\alpha_{1},r_{1}}}
\cdots
\Big\|_{\dot{K}_{q_{n-1}}^{\alpha_{n-1},r_{n-1}}}.
\end{equation*}
Here $\bar{m}=(m_{1},\dots,m_{n-1})$.

Observe that
\begin{equation*}
\|\lambda\|_{\dot{E}_{\vec{p}}^{\vec{\alpha},\vec{r}} b_{r}^{s_{1}}}^{r}
\lesssim
\sum_{v=0}^{\infty}
\Big\|
\cdots
\Big\| V_{v}
\Big\|_{\dot{K}_{p_{2}}^{\alpha_{2},r_{2}}}
\cdots
\Big\|_{\dot{K}_{p_{n}}^{\alpha_{n},r_{n}}}^{r}.
\end{equation*}

Hence,
\begin{align*}
\|\lambda\|_{\dot{E}_{\vec{p}}^{\vec{\alpha},\vec{r}_{1}} b_{r}^{s_{1}}}^{r}
&\lesssim
\sum_{v=0}^{\infty}
\Bigg(
\sum_{k_{n}=-\infty}^{\infty}
2^{k_{n}\alpha_{n}r_{n}}
\,2^{\left(s_{2}+\frac{n}{2}+\frac{1}{p_{n}}-\frac{1}{q_{n}}\right)v r_{n}}
\\
&\qquad \times
\big\|
\sum_{m_{n}\in \mathbb{Z}}
\hat{\lambda}_{v,m_{n}} \chi_{v,m_{n}} \chi_{k_{n}}
\big\|_{p_{n}}^{r_{n}}
\Bigg)^{r/r_{n}} \\
&=
c\,\|\hat{\lambda}\|_{\dot{K}_{p_{n}}^{\alpha_{n},r_{n}} b_{r}^{s_{2}+\frac{1}{p_{n}}-\frac{1}{q_{n}}}}^{r},
\end{align*}
where $\hat{\lambda}=\{\hat{\lambda}_{v,m_{n}}\}_{v\in\mathbb{N}_{0},\,m_{n}\in \mathbb{Z}}$.

Using the embedding
\begin{equation*}
\dot{K}_{q_{n}}^{\alpha_{n},r_{n}} f_{\infty}^{s_{2}}
\hookrightarrow
\dot{K}_{p_{n}}^{\alpha_{n},r_{n}} b_{r}^{s_{2}+\frac{1}{p_{n}}-\frac{1}{q_{n}}},
\end{equation*}
see \cite{drihem2016jawerth,Drihem-lorentz-b}, we conclude that
\begin{equation*}
\|\lambda\|_{\dot{E}_{\vec{p}}^{\vec{\alpha},\vec{r}} b_{r}^{s_{1}}}
\lesssim
\|\hat{\lambda}\|_{\dot{K}_{q_{n}}^{\alpha_{n},r_{n}} f_{\infty}^{s_{2}}}
\lesssim\,\|\lambda\|_{\dot{K}_{\vec{q}}^{\vec{\alpha},\vec{r}} f_{\infty}^{s_{2}}}.
\end{equation*}

This completes the proof.
\end{proof}

To prove the following result, it suffices to repeat the arguments used in
the proof of Theorems \ref{Jewarth1} and \ref{Jewarth2}.

\begin{theorem}
	\label{Jewarth3}
	Let $s_{1},s_{2}\in \mathbb{R}$, $0<\vec{q}<\vec{p}<\infty$, 
	$0<\vec{r}_{1},\vec{r}_{2}\leq \infty$, 
	$\vec{\alpha}_{1}=(\alpha_{1}^{1},\ldots,\alpha_{n}^{1})\in \mathbb{R}^{n}$, 
	$\vec{\alpha}_{2}=(\alpha_{1}^{2},\ldots,\alpha_{n}^{2})\in \mathbb{R}^{n}$,
	and $0<\theta \leq \infty$. Assume that
	\begin{equation*}
	\alpha_{i}^{2}\geq \alpha_{i}^{1}>-\frac{1}{p_{i}}, \quad i\in \{1,\ldots,n-1\},
	\end{equation*}
	\begin{equation*}
	s_{1}-\frac{1}{\mathbf{p}}-\boldsymbol{\alpha}_{1}
	=
	s_{2}-\frac{1}{\mathbf{q}}-\boldsymbol{\alpha}_{2},
	\end{equation*}
	and
	\begin{equation*}
	r_{i}^{2}=r_{i}^{1}\quad \text{if } \alpha_{i}^{2}=\alpha_{i}^{1}, \quad i\in \{1,\ldots,n-1\}.
	\end{equation*}
	
	\textup{(i)} Suppose that $\alpha_{n}^{2}>\alpha_{n}^{1}>-\frac{1}{p_{n}}$. Then
	\begin{equation*}
	\dot{E}_{\vec{q}}^{\vec{\alpha}_{2},\vec{r}_{2}} f_{\theta}^{s_{2}}
	\hookrightarrow
	\dot{E}_{\vec{p}}^{\vec{\alpha}_{1},\vec{r}_{1}} b_{r_{n}^{2}}^{s_{1}}.
	\end{equation*}
	
	\textup{(ii)} Suppose that $\alpha_{n}^{2}=\alpha_{n}^{1}>-\frac{1}{p_{n}}$. Then
	\begin{equation*}
	\dot{E}_{\vec{q}}^{\vec{\alpha}_{2},\vec{r}_{2}} f_{\theta}^{s_{2}}
	\hookrightarrow
	\dot{E}_{\vec{p}}^{\vec{\alpha}_{1},\vec{r}_{1}} b_{\max(r_{n}^{2},q_{n})}^{s_{1}}.
	\end{equation*}
	
\end{theorem}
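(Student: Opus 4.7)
The plan is to run the three-step scheme of Theorems \ref{Jewarth1} and \ref{Jewarth2}, selecting at each of the first $n-1$ coordinates which Step~1 estimate to invoke according to the dichotomy $\alpha_i^2>\alpha_i^1$ versus $\alpha_i^2=\alpha_i^1$; the last coordinate is then closed out by the strict-$\alpha$ one-dimensional Jawerth embedding in case $\mathrm{(i)}$ and by the equal-$\alpha$ one in case $\mathrm{(ii)}$.

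Fix $v\in\mathbb{N}_0$ and set $\digamma_v = 2^{v(s_1+n/2)}\big\|\sum_{m\in\mathbb{Z}^n}\lambda_{v,m}\chi_{v,m}\big\|_{\dot{K}_{p_1}^{\alpha_1^1,r_1^1}}$. At coordinate $1$, Step~1 of Theorem \ref{Jewarth1} (when $\alpha_1^2>\alpha_1^1$) or Step~1 of Theorem \ref{Jewarth2} (when $\alpha_1^2=\alpha_1^1$, so that $r_1^1=r_1^2$ by hypothesis) produces in both cases an estimate of the form
\[
\digamma_v \lesssim 2^{(s_2^2 + n/2)v}\sum_{\tilde{m}\in\mathbb{Z}^{n-1}} \bigg\|\sum_{m_1\in\mathbb{Z}} \lambda_{v,m}\chi_{v,m_1}\bigg\|_{\dot{K}_{q_1}^{\alpha_1^2,\theta_1}} \chi_{v,\tilde{m}},
\]
with $\theta_1 = \infty$ in the strict case and $\theta_1 = r_1^1 = r_1^2$ in the equality case. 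Iterating this dichotomy through coordinates $i = 2,\ldots,n-1$, by imitating Step~2 of the appropriate theorem on the contracted sequence in the remaining variables, reduces matters to controlling
\[
\big\|\hat{\lambda}\big\|_{\dot{K}_{p_n}^{\alpha_n^1,r_n^1} b_r^{s_2^n}}, \qquad \hat{\lambda}_{v,m_n} := \bigg\|\cdots\bigg\|\sum_{\bar{m}\in\mathbb{Z}^{n-1}} \lambda_{v,m}\chi_{v,\bar{m}}\bigg\|_{\dot{K}_{q_1}^{\alpha_1^2,\theta_1}}\cdots\bigg\|_{\dot{K}_{q_{n-1}}^{\alpha_{n-1}^2,\theta_{n-1}}},
\]
where $\theta_i \in \{\infty, r_i^1\}$ is selected as above and the target Besov exponent $r$ is fixed in the last step.

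For case $\mathrm{(i)}$ I would take $r = r_n^2$ and apply the strict-$\alpha$ one-dimensional Jawerth embedding $\dot{K}_{q_n}^{\alpha_n^2,r_n^2} f_\infty^{s_2} \hookrightarrow \dot{K}_{p_n}^{\alpha_n^1,r_n^1} b_{r_n^2}^{s_2^n}$ from \cite{drihem2016jawerth} and \cite{Drihem-lorentz-b}, closing as in Step~3 of Theorem \ref{Jewarth1}. For case $\mathrm{(ii)}$ I would take $r = \max(r_n^2, q_n)$ and use instead the equal-$\alpha$ analogue $\dot{K}_{q_n}^{\alpha_n^1,r_n^2} f_\infty^{s_2} \hookrightarrow \dot{K}_{p_n}^{\alpha_n^1,r_n^1} b_{\max(r_n^2,q_n)}^{s_2^n}$ from the same references, closing as in Step~3 of Theorem \ref{Jewarth2}. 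In both cases the iterated Minkowski inequality in coordinates $1,\ldots,n-1$ built into the definition of $\hat{\lambda}$ yields $\|\hat{\lambda}\|_{\dot{K}_{q_n}^{\alpha_n^2,r_n^2} f_\infty^{s_2}} \lesssim \|\lambda\|_{\dot{E}_{\vec{q}}^{\vec{\alpha}_2,\vec{r}_2} f_\theta^{s_2}}$.

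The main obstacle, and essentially the only new bookkeeping relative to Theorems \ref{Jewarth1} and \ref{Jewarth2}, is verifying that the two Step~1 rearrangement arguments interleave consistently in Steps~2 through $n-1$. Both arguments rest on the majorant \eqref{corr4.1-lorentz}--\eqref{jewarth-main3}, which only requires that the function being rearranged in variable $x_i$ be sublinearly controllable by a Herz-type norm over the thickened annulus $\breve{R}_{k_i}$; this property is insensitive to whether the inner norm at level $i-1$ is $\dot{K}_{q_{i-1}}^{\alpha_{i-1}^2,\infty}$ or $\dot{K}_{q_{i-1}}^{\alpha_{i-1}^1,r_{i-1}^1}$. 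The hypothesis $r_i^2 = r_i^1$ at each equality coordinate is precisely what ensures that every intermediate one-dimensional Jawerth embedding used in the reduction is of the matching (equal-$\alpha$) type with coincident summability indices, so that no spurious index change is forced along the iteration.
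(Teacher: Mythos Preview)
Your proposal is correct and matches the paper's approach: the paper simply states that the result follows by repeating the arguments used in the proofs of Theorems \ref{Jewarth1} and \ref{Jewarth2}, without further detail. Your write-up actually supplies more than the paper does, making explicit the coordinate-by-coordinate dichotomy and the final one-dimensional Jawerth embedding in the $n$th variable; the point about the hypothesis $r_i^2=r_i^1$ at equality coordinates ensuring compatibility of summability indices along the iteration is exactly the bookkeeping the paper leaves implicit.
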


Using Theorems \ref{phi-tran}, \ref{Jewarth1}, \ref{Jewarth2} and\ \ref%
{Jewarth3}, we have the following Jawerth embedding.

\begin{theorem}
	\label{embeddings6.1-lorentz}
	Let $s_{1}, s_{2} \in \mathbb{R}$, $0<\vec{q}<\vec{p}<\infty$, 
	$0<\vec{r}_{1}, \vec{r}_{2} \leq \infty$, 
	$\vec{\alpha}=(\alpha_{1},\ldots,\alpha_{n})\in \mathbb{R}^{n}$,
	$\vec{\alpha}_{1}=(\alpha_{1}^{1},\ldots,\alpha_{n}^{1})\in \mathbb{R}^{n}$,
	$\vec{\alpha}_{2}=(\alpha_{1}^{2},\ldots,\alpha_{n}^{2})\in \mathbb{R}^{n}$,
	and $0<\theta \leq \infty$. Assume that
	\begin{equation*}
	\alpha_i>-\frac{1}{p_i},\quad 
	\alpha_i^{1}>-\frac{1}{p_i},\quad 
	\alpha_i^{2}>-\frac{1}{q_i},\quad 
	i\in\{1,\ldots,n\}.
	\end{equation*}
	
	\begin{enumerate}
		\item[(i)] Under the hypothesis of Theorem~\ref{Jewarth1}, we have
		\begin{equation}
		\dot{E}_{\vec{q}}^{\vec{\alpha}_{2},\vec{r}_{2}} F_{\theta}^{s_{2}}
		\hookrightarrow 
		\dot{E}_{\vec{p}}^{\vec{\alpha}_{1},\vec{r}_{1}} B_{r_{n}^{2}}^{s_{1}}.
		\label{Jawerth-lorentz}
		\end{equation}
		
		\item[(ii)] Under the hypothesis of Theorem~\ref{Jewarth2}, we have
		\begin{equation}
		\dot{E}_{\vec{q}}^{\vec{\alpha},\vec{r}_{2}} F_{\theta}^{s_{2}}
		\hookrightarrow 
		\dot{E}_{\vec{p}}^{\vec{\alpha},\vec{r}_{1}} B_{\max(r_{n}^{2},q_{n})}^{s_{1}}.
		\label{Jawerth1-lorentz}
		\end{equation}
		
		\item[(iii)] Under the hypothesis of Theorem~\ref{Jewarth3}/(i), we have
		\begin{equation}
		\dot{E}_{\vec{q}}^{\vec{\alpha}_{2},\vec{r}_{2}} F_{\theta}^{s_{2}}
		\hookrightarrow 
		\dot{E}_{\vec{p}}^{\vec{\alpha}_{1},\vec{r}_{1}} B_{r_{n}^{2}}^{s_{1}}.
		\label{Jawerth3-lorentz}
		\end{equation}
		
		\item[(iv)] Under the hypothesis of Theorem~\ref{Jewarth3}/(ii), we have
		\begin{equation}
		\dot{E}_{\vec{q}}^{\vec{\alpha}_{2},\vec{r}_{2}} F_{\theta}^{s_{2}}
		\hookrightarrow 
		\dot{E}_{\vec{p}}^{\vec{\alpha}_{1},\vec{r}_{1}} B_{\max(r_{n}^{2},q_{n})}^{s_{1}}.
		\label{Jewarth4-lorentz}
		\end{equation}
		
	\end{enumerate}
\end{theorem}
By Theorem \ref{embeddings6.1-lorentz}/(ii) and the fact that
\begin{equation*}
F_{\vec{q},\infty}^{s_{2}}
= \dot{E}_{\vec{q}}^{\vec{0},\vec{q}} F_{\infty}^{s_{2}}
\quad \text{and} \quad
\dot{E}_{\vec{p}}^{\vec{0},\vec{q}} B_{q_{n}}^{s_{1}}
\hookrightarrow
\dot{E}_{\vec{p}}^{\vec{0},\vec{p}} B_{q_{n}}^{s_{1}}
= B_{\vec{p},q_{n}}^{s_{1}},
\end{equation*}
we obtain the following embeddings.

\begin{corollary}
	Let $s_{1}, s_{2} \in \mathbb{R}$ and $0 < \vec{q} < \vec{p} < \infty$.
	Suppose that
	\[
	s_{1} - \frac{1}{\mathbf{p}} = s_{2} - \frac{1}{\mathbf{q}}.
	\]
	Then
	\begin{equation*}
	F_{\vec{q},\infty}^{s_{2}}
	\hookrightarrow
	\dot{E}_{\vec{p}}^{\vec{0},\vec{q}} B_{q_{n}}^{s_{1}}
	\hookrightarrow
	B_{\vec{p},q_{n}}^{s_{1}}.
	\end{equation*}
\end{corollary}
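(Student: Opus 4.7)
The plan is to split the chain into two embeddings and verify each from results already established in the paper. For the first embedding, I would apply Theorem \ref{embeddings6.1-lorentz}(ii) with the specialization $\vec{\alpha}=\vec{0}$, $\vec{r}_1=\vec{r}_2=\vec{q}$ and $\theta=\infty$. The scaling condition $s_1-\tfrac{1}{\mathbf{p}}=s_2-\tfrac{1}{\mathbf{q}}$ from the corollary reduces to the hypothesis of Theorem \ref{Jewarth2} (since $\mathbf{\alpha}=0$), and the side condition $\alpha_i>-1/p_i$ is automatic. Theorem \ref{embeddings6.1-lorentz}(ii) then yields
\begin{equation*}
\dot{E}_{\vec{q}}^{\vec{0},\vec{q}}F_{\infty}^{s_{2}}\hookrightarrow \dot{E}_{\vec{p}}^{\vec{0},\vec{q}}B_{\max(q_n,q_n)}^{s_{1}}=\dot{E}_{\vec{p}}^{\vec{0},\vec{q}}B_{q_n}^{s_{1}}.
\end{equation*}
Combining with the identification $F_{\vec{q},\infty}^{s_2}=\dot{E}_{\vec{q}}^{\vec{0},\vec{q}}F_{\infty}^{s_2}$ recorded in Section 3 (following Definition \ref{B-F-def}) produces $F_{\vec{q},\infty}^{s_2}\hookrightarrow \dot{E}_{\vec{p}}^{\vec{0},\vec{q}}B_{q_n}^{s_1}$, which is the first inclusion.

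For the second embedding, I would invoke monotonicity of the Herz scale in the inner index. Since $0<q_i\le p_i$ for every $i\in\{1,\dots,n\}$, Theorem \ref{embeddings1.1}(iii) applied with $\vec{q}_1=\vec{q}$, $\vec{q}_2=\vec{p}$ and $\vec{\alpha}=\vec{0}$ gives
\begin{equation*}
\dot{E}_{\vec{p}}^{\vec{0},\vec{q}}B_{q_n}^{s_1}\hookrightarrow \dot{E}_{\vec{p}}^{\vec{0},\vec{p}}B_{q_n}^{s_1}.
\end{equation*}
The right-hand side equals $B_{\vec{p},q_n}^{s_1}$ by the identification $\dot{E}_{\vec{p}}^{0,\vec{p}}B_{\beta}^{s}=B_{\vec{p},\beta}^{s}$ noted just after Definition \ref{B-F-def}. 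Chaining the two embeddings completes the proof.

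No substantial obstacle is anticipated: the corollary is essentially a bookkeeping exercise on top of Theorem \ref{embeddings6.1-lorentz}(ii), which carries all the analytic content via the discrete Jawerth embedding of Theorems \ref{Jewarth1}--\ref{Jewarth3}. The only mild point to double-check is that the hypotheses on $\vec{\alpha}$ in both Theorem \ref{embeddings6.1-lorentz}(ii) and Theorem \ref{embeddings1.1}(iii) are vacuous when $\vec{\alpha}=\vec{0}$, which they are, and that the parameter identification $\max(q_n,q_n)=q_n$ indeed matches the target index $q_n$ appearing in the statement.
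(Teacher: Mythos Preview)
Your proof is correct and follows exactly the same route as the paper: the first embedding is obtained from Theorem~\ref{embeddings6.1-lorentz}(ii) with $\vec{\alpha}=\vec{0}$ and $\vec{r}_1=\vec{r}_2=\vec{q}$ together with the identification $F_{\vec{q},\infty}^{s_2}=\dot{E}_{\vec{q}}^{\vec{0},\vec{q}}F_{\infty}^{s_2}$, and the second is the monotonicity embedding $\dot{E}_{\vec{p}}^{\vec{0},\vec{q}}B_{q_n}^{s_1}\hookrightarrow \dot{E}_{\vec{p}}^{\vec{0},\vec{p}}B_{q_n}^{s_1}=B_{\vec{p},q_n}^{s_1}$. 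The paper states these facts immediately before the corollary without an explicit proof environment, and your write-up simply makes the citations to Theorem~\ref{embeddings1.1}(iii) and the identifications after Definition~\ref{B-F-def} explicit.
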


\subsection{Franke embeddings}

The classical Franke embedding may be rewritten as follows:
\begin{equation*}
B_{p,s}^{s_{2}} \hookrightarrow F_{s,\infty}^{s_{1}},
\end{equation*}
provided that $s_{1} - \frac{n}{s} = s_{2} - \frac{n}{p}$ and $0 < p < s < \infty$, see e.g.\ \cite{Fr86}. 

In this paper, we extend this embedding to mixed-norm Herz-type Besov--Triebel--Lizorkin spaces. We follow ideas from \cite{drihem2016jawerth}, \cite{Drihem-lorentz-b}, \cite{ST19}, and \cite[p.~76]{Vybiral08}. In particular, we prove a discrete version of the Franke embedding.

\begin{theorem}
	\label{franke1}
	Let $s_{1}, s_{2} \in \mathbb{R}$, $0 < \vec{q} < \vec{p} < \infty$, 
	$0 < \vec{r} \leq \infty$, 
	$\vec{\alpha}_{1} = (\alpha_{1}^{1}, \dots, \alpha_{n}^{1}) \in \mathbb{R}^{n}$,
	$\vec{\alpha}_{2} = (\alpha_{1}^{2}, \dots, \alpha_{n}^{2}) \in \mathbb{R}^{n}$,
	and $0 < \theta \leq \infty$.
	
	We assume that
	\begin{equation*}
	\alpha_{i}^{2} > \alpha_{i}^{1} > -\frac{1}{p_{i}}, \quad i\in \{1,\dots,n\},
	\end{equation*}
	and that
	\begin{equation*}
	s_{1} - \frac{1}{\mathbf{p}} - \boldsymbol{\alpha}_{1}
	=
	s_{2} - \frac{1}{\mathbf{q}} - \boldsymbol{\alpha}_{2}.
	\end{equation*}
	
	Then the embedding
	\begin{equation*}
	\dot{E}_{\vec{q}}^{\vec{\alpha}_{2}, \vec{r}} b_{r_{n}}^{s_{2}}
	\hookrightarrow
	\dot{E}_{\vec{p}}^{\boldsymbol{\alpha}_{1}, \vec{r}} f_{\theta}^{s_{1}}
	\end{equation*}
	holds.
\end{theorem}

\begin{proof}
We divide the proof into three steps.

\textit{Step 1.} We set
\begin{equation*}
\digamma
=\Big\|\Big(\sum_{v=0}^{\infty }\sum_{m\in \mathbb{Z}^{n}}
2^{v\left(s_{1}+\frac{n}{2}\right)\theta }
|\lambda _{v,m}|^{\theta }\chi _{v,m}
\Big)^{1/\theta }\Big\|_{\dot{K}_{p_{1}}^{\alpha _{1}^{1},r_{1}}}.
\end{equation*}

In Theorem \ref{embeddings-sobolev}, we have proved that
\begin{equation*}
\digamma \lesssim
\sum_{v=0}^{\infty }
2^{\left(s_{2}^{2}+\frac{n}{2}\right)v}
\sum_{\tilde{m}\in \mathbb{Z}^{n-1}}
\Big\|\sum_{m_{1}\in \mathbb{Z}}\lambda_{v,m}\chi_{v,m_{1}}\Big\|_{\dot{K}_{q_{1}}^{\alpha _{1}^{2},r_{1}}}
\,\chi_{v,\tilde{m}}.
\end{equation*}

\textit{Step 2.} We set
\begin{equation*}
V
=\sum_{v=0}^{\infty }
2^{\left(s_{2}^{2}+\frac{n}{2}\right)v}
\sum_{\tilde{m}\in \mathbb{Z}^{n-1}}
\Big\|\sum_{m_{1}\in \mathbb{Z}}\lambda_{v,m}\chi_{v,m_{1}}\Big\|_{\dot{K}_{q_{1}}^{\alpha _{1}^{2},r_{1}}}
\,\chi_{v,\tilde{m}}.
\end{equation*}

In Theorem \ref{embeddings-sobolev}, we have proved that
\begin{equation*}
\|V\|_{\dot{K}_{p_{2}}^{\alpha _{2}^{1},r_{2}}}
\end{equation*}
is bounded by
\begin{equation*}
C\sum_{v=0}^{\infty }
2^{\left(s_{2}^{3}+\frac{n}{2}\right)v}
\sum_{\bar{m}\in \mathbb{Z}^{n-2}}
\Big\|
\Big\|
\sum_{\check{m}\in \mathbb{Z}^{2}}
\lambda_{v,m}\chi_{v,\check{m}}
\Big\|_{\dot{K}_{q_{1}}^{\alpha _{1}^{2},r_{1}}}
\Big\|_{\dot{K}_{q_{2}}^{\alpha _{2}^{2},r_{2}}}
\chi_{v,\bar{m}},
\end{equation*}
where \(m=(m_{1},\ldots,m_{n})\), \(\check{m}=(m_{1},m_{2})\), and \(\bar{m}=(m_{3},\ldots,m_{n})\).

\textit{Step 3.} Let
\[
\lambda \in \dot{E}_{\vec{q}}^{\vec{\alpha}_{2},\vec{r}} b_{r_{n}}^{s_{2}}.
\]
Let \(V\) be as defined in Step 2. Let \(j \in \{2,\ldots,n-1\}\). In Theorem \ref{embeddings-sobolev}, we have proved that
\begin{equation*}
\big\|
\cdots
\big\|V\big\|_{\dot{K}_{p_{2}}^{\alpha_{2}^{1},r_{2}}}
\cdots
\big\|_{\dot{K}_{p_{n-1}}^{\alpha_{n-1}^{1},r_{n-1}}}
\end{equation*}
is bounded by
\begin{equation*}
C\sum_{v=0}^{\infty }
2^{\left(s_{2}^{n}+\frac{n}{2}\right)v}
\sum_{m_{n}\in \mathbb{Z}}
\Big\|
\cdots
\Big\|
\sum_{\bar{m}\in \mathbb{Z}^{n-1}}
\lambda_{v,m}\chi_{v,\bar{m}}
\Big\|_{\dot{K}_{q_{1}}^{\alpha_{1}^{2},r_{1}}}
\cdots
\Big\|_{\dot{K}_{q_{n-1}}^{\alpha_{n-1}^{2},r_{n-1}}}
\chi_{v,m_{n}},
\end{equation*}
where \(m=(m_{1},\ldots,m_{n})\) and \(\bar{m}=(m_{1},\ldots,m_{n-1})\).

We set
\begin{equation*}
\hat{\lambda}_{v,m_{n}}
=
\Big\|
\cdots
\Big\|
\sum_{\bar{m}\in \mathbb{Z}^{n-1}}
\lambda_{v,m}\chi_{v,\bar{m}}
\Big\|_{\dot{K}_{q_{1}}^{\alpha_{1}^{2},r_{1}}}
\cdots
\Big\|_{\dot{K}_{q_{n-1}}^{\alpha_{n-1}^{2},r_{n-1}}}.
\end{equation*}

Observe that
\begin{align*}
\|\lambda\|_{\dot{E}_{\vec{p}}^{\vec{\alpha}_{1},\vec{r}} f_{\beta}^{s_{1}}}
&\lesssim
\big\|
\cdots
\big\|V\big\|_{\dot{K}_{p_{2}}^{\alpha_{1}^{2},r_{2}}}
\cdots
\big\|_{\dot{K}_{p_{n}}^{\alpha_{n}^{2},r_{n}}} \\
&\lesssim
\|\hat{\lambda}\|_{\dot{K}_{p_{n}}^{\alpha_{n}^{1},r_{n}} f_{1}^{s_{2}^{n}}}.
\end{align*}

Using the embedding
\begin{equation*}
\dot{K}_{q_{n}}^{\alpha_{n}^{2},r_{n}} b_{r_{n}}^{s_{2}}
\hookrightarrow
\dot{K}_{p_{n}}^{\alpha_{n}^{1},r_{n}} f_{1}^{s_{2}^{n}},
\end{equation*}
see \cite{drihem2016jawerth}, \cite{Drihem-lorentz-b}, we obtain
\begin{equation*}
\|\lambda\|_{\dot{E}_{\vec{p}}^{\vec{\alpha}_{1},\vec{r}} f_{\beta}^{s_{1}}}
\lesssim
\|\lambda\|_{\dot{E}_{\vec{q}}^{\vec{\alpha}_{2},\vec{r}} b_{r_{n}}^{s_{2}}}.
\end{equation*}

This completes the proof.
\end{proof}
In the next theorem, we consider the case $\vec{\alpha}_{1} = \vec{\alpha}_{2}$.

\begin{theorem}
	\label{franke2}
	Let $s_{1}, s_{2} \in \mathbb{R}$, $0 < \vec{q} < \vec{p} < \infty$, $0 < \vec{r} \leq \infty$, $\vec{\alpha} = (\alpha_{1}, \ldots, \alpha_{n}) \in \mathbb{R}^{n}$, and $0 < \theta \leq \infty$.
	We assume that
	\begin{equation*}
	\alpha_{i} > -\frac{1}{p_{i}}, \quad i \in \{1, \ldots, n\},
	\end{equation*}
	and
	\begin{equation*}
	s_{1} - \frac{1}{\mathbf{p}} = s_{2} - \frac{1}{\mathbf{q}}.
	\end{equation*}
	
	Then
	\begin{equation*}
	\dot{E}_{\vec{q}}^{\vec{\alpha}, \vec{r}} b_{\delta}^{s_{2}}
	\hookrightarrow
	\dot{E}_{\vec{p}}^{\vec{\alpha}, \vec{r}} f_{\theta}^{s_{1}},
	\end{equation*}
	holds, where
	\begin{equation*}
	\delta =
	\begin{cases}
	r_{n}, & \text{if } r_{n} \leq p_{n}, \\
	p_{n}, & \text{if } r_{n} > p_{n}.
	\end{cases}
	\end{equation*}
\end{theorem}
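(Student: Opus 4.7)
The plan is to mirror the three-step scheme used in the proof of Theorem \ref{franke1}, adapted to the equal-weight case $\vec{\alpha}_1 = \vec{\alpha}_2 = \vec{\alpha}$. Let $\lambda \in \dot{E}_{\vec{q}}^{\vec{\alpha},\vec{r}} b_{\delta}^{s_2}$ and set
$$\digamma = \Big\|\Big(\sum_{v=0}^{\infty}\sum_{m\in\mathbb{Z}^{n}} 2^{v(s_{1}+\frac{n}{2})\theta}|\lambda_{v,m}|^{\theta}\chi_{v,m}\Big)^{1/\theta}\Big\|_{\dot{K}_{p_{1}}^{\alpha_{1},r_{1}}}.$$
Since the hypotheses of Theorem \ref{embeddings-sobolev} are satisfied with $\vec{\alpha}_1=\vec{\alpha}_2=\vec{\alpha}$ (taking equality is allowed there), I may invoke its Step 1 verbatim to obtain
$$\digamma \lesssim \sum_{v=0}^{\infty} 2^{v\left(s_{2}+\frac{n}{2}+\frac{1}{\mathbf{p}_{2}}-\frac{1}{\mathbf{q}_{2}}\right)} \sum_{\tilde{m}\in\mathbb{Z}^{n-1}} \Big\|\sum_{m_{1}\in\mathbb{Z}}\lambda_{v,m}\chi_{v,m_{1}}\Big\|_{\dot{K}_{q_{1}}^{\alpha_{1},r_{1}}}\chi_{v,\tilde{m}}.$$

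Next, in Step 2 I iterate this one-variable Sobolev-type estimate in the variables $m_2,\dots,m_{n-1}$ in the same way as in Steps 2--3 of Theorem \ref{embeddings-sobolev}, arriving at the intermediate bound
$$\big\|\cdot\cdot\cdot\|\digamma\|_{\dot{K}_{p_{2}}^{\alpha_{2},r_{2}}}\cdot\cdot\cdot\|_{\dot{K}_{p_{n-1}}^{\alpha_{n-1},r_{n-1}}} \lesssim \sum_{v=0}^{\infty} 2^{v\left(s_{2}+\frac{1}{p_n}-\frac{1}{q_n}+\frac{n}{2}\right)} \sum_{m_n\in\mathbb{Z}} \hat{\lambda}_{v,m_n}\chi_{v,m_n},$$
where
$$\hat{\lambda}_{v,m_n} = \Big\|\cdot\cdot\cdot\Big\|\sum_{\bar m\in\mathbb{Z}^{n-1}}\lambda_{v,m}\chi_{v,\bar m}\Big\|_{\dot{K}_{q_{1}}^{\alpha_{1},r_{1}}}\cdot\cdot\cdot\Big\|_{\dot{K}_{q_{n-1}}^{\alpha_{n-1},r_{n-1}}}.$$

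In Step 3, taking the outer $\dot{K}_{p_n}^{\alpha_n,r_n}$-norm reduces the claim to the one-dimensional Franke embedding
$$\dot{K}_{q_n}^{\alpha_n,r_n} b_{\delta}^{s_2} \hookrightarrow \dot{K}_{p_n}^{\alpha_n,r_n} f_{\theta}^{s_2+\frac{1}{p_n}-\frac{1}{q_n}},$$
applied to the sequence $\{\hat{\lambda}_{v,m_n}\}$. This one-dimensional embedding with the stated choice of $\delta=\min(r_n,p_n)$ is already known from \cite{drihem2016jawerth} and \cite{Drihem-lorentz-b}; combining with the identification $\hat\lambda$-norm $=\|\lambda\|_{\dot E_{\vec q}^{\vec\alpha,\vec r} b_\delta^{s_2}}$ completes the proof.

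The main obstacle is the appearance of $\delta=\min(r_n,p_n)$ rather than $\delta=r_n$: in the strict-weight case of Theorem \ref{franke1}, Lemma \ref{lem:lq-inequality} absorbs the geometric series arising from $\alpha_i^2>\alpha_i^1$ and preserves the index $r_n$, whereas here with $\vec\alpha_1=\vec\alpha_2$ no such gain is available. One must instead compare $\ell^{r_n}$ with the effective Lebesgue index $p_n$ on the last coordinate via Hölder's inequality, which is precisely the origin of the truncation to $\min(r_n,p_n)$ in the one-dimensional endpoint embedding invoked in Step~3. This endpoint embedding is the sharp ingredient; once it is in hand, the multidimensional passage via Steps 1--2 is a routine iteration that parallels Theorem \ref{franke1}.
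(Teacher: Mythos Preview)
Your proposal is essentially identical to the paper's proof: both reuse Steps~1--3 of Theorem~\ref{embeddings-sobolev} with $\vec{\alpha}_1=\vec{\alpha}_2=\vec{\alpha}$ to peel off the first $n-1$ coordinates and reduce to the one-dimensional Herz-type Franke embedding in the last variable, citing \cite{drihem2016jawerth,Drihem-lorentz-b} for that endpoint ingredient. The only cosmetic difference is that the paper lands in $\dot{K}_{p_n}^{\alpha_n,r_n}f_{1}^{s_2+1/p_n-1/q_n}$ (since the iterated bound from Theorem~\ref{embeddings-sobolev} produces an $\ell^1$-sum in $v$), whereas you write $f_{\theta}$; this is harmless because the cited one-dimensional embedding already targets $f_1$, which is the stronger statement.
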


\begin{proof}
Let $\lambda \in \dot{E}_{\vec{q}}^{\vec{\alpha},\vec{r}} b_{\delta}^{s_{2}}$.
We set
\begin{equation*}
V
= \sum_{v=0}^{\infty}
2^{\left(s_{2} + \frac{n}{2} + \frac{1}{\mathbf{p}_{2}} - \frac{1}{\mathbf{q}_{2}}\right)v}
\sum_{\tilde{m}\in \mathbb{Z}^{n-1}}
\Big\|
\sum_{m_{1}\in \mathbb{Z}} \lambda_{v,m}\chi_{v,m_{1}}
\Big\|_{\dot{K}_{q_{1}}^{\alpha_{1},r_{1}}}
\chi_{v,\tilde{m}}.
\end{equation*}

In Theorem \ref{embeddings-sobolev}, we have proved that
\begin{equation*}
\big\|\cdots \|V\|_{\dot{K}_{p_{2}}^{\alpha_{2},r_{2}}}\cdots \big\|_{\dot{K}_{p_{n-1}}^{\alpha_{n-1},r_{n-1}}}
\end{equation*}
is bounded by
\begin{equation*}
C \sum_{v=0}^{\infty}
2^{\left(s_{2} + \frac{n}{2} + \frac{1}{\mathbf{p}_{n}} - \frac{1}{\mathbf{q}_{n}}\right)v}
\sum_{m_{n}\in \mathbb{Z}}
\Big\|
\cdots
\Big\|
\sum_{\bar{m}\in \mathbb{Z}^{n-1}} \lambda_{v,m}\chi_{v,\bar{m}}
\Big\|_{\dot{K}_{q_{1}}^{\alpha_{1},r_{1}}}
\cdots
\Big\|_{\dot{K}_{q_{n-1}}^{\alpha_{n-1},r_{n-1}}}
\chi_{v,m_{n}},
\end{equation*}
where $m = (m_{1},\ldots,m_{n})$ and $\bar{m} = (m_{1},\ldots,m_{n-1})$.

We set
\begin{equation*}
\hat{\lambda}_{v,m_{n}}
=
\Big\|
\cdots
\Big\|
\sum_{\bar{m}\in \mathbb{Z}^{n-1}} \lambda_{v,m}\chi_{v,\bar{m}}
\Big\|_{\dot{K}_{q_{1}}^{\alpha_{1},r_{1}}}
\cdots
\Big\|_{\dot{K}_{q_{n-1}}^{\alpha_{n-1},r_{n-1}}}.
\end{equation*}

Observe that
\begin{align*}
\|\lambda\|_{\dot{E}_{\vec{p}}^{\vec{\alpha},\vec{r}} f_{\beta}^{s_{1}}}
& \lesssim
\big\|\cdots \|V\|_{\dot{K}_{p_{2}}^{\alpha_{2},r_{2}}}\cdots \big\|_{\dot{K}_{p_{n}}^{\alpha_{n},r_{n}}} \\
& \lesssim
\|\hat{\lambda}\|_{\dot{K}_{p_{n}}^{\alpha_{n},r_{n}}
	f_{1}^{\,s_{2} + \frac{1}{\mathbf{p}_{n}} - \frac{1}{\mathbf{q}_{n}}}}.
\end{align*}

Using the embedding
\begin{equation*}
\dot{K}_{q_{n}}^{\alpha_{n},r_{n}} b_{\delta}^{s_{2}}
\hookrightarrow
\dot{K}_{p_{n}}^{\alpha_{n},r_{n}}
f_{1}^{\,s_{2} + \frac{1}{\mathbf{p}_{n}} - \frac{1}{\mathbf{q}_{n}}},
\end{equation*}
see \cite{drihem2016jawerth}, \cite{Drihem-lorentz-b}, we obtain
\begin{equation*}
\|\lambda\|_{\dot{E}_{\vec{p}}^{\vec{\alpha},\vec{r}} f_{\beta}^{s_{1}}}
\lesssim
\|\lambda\|_{\dot{E}_{\vec{q}}^{\vec{\alpha},\vec{r}} b_{r_{n}}^{s_{2}}}.
\end{equation*}

This completes the proof.
\end{proof}

We collect Theorems \ref{franke1} and \ref{franke2} into a single result as follows.

\begin{theorem}\label{franke3}
	Let $s_{1}, s_{2} \in \mathbb{R}$, $0 < \vec{q} < \vec{p} < \infty$, $0 < \vec{r}_{1}, \vec{r}_{2} \leq \infty$, and
	$\vec{\alpha}_{1} = (\alpha_{1}^{1}, \ldots, \alpha_{n}^{1}) \in \mathbb{R}^{n}$,
	$\vec{\alpha}_{2} = (\alpha_{1}^{2}, \ldots, \alpha_{n}^{2}) \in \mathbb{R}^{n}$, and $0 < \theta \leq \infty$.
	Assume that
	\begin{equation*}
	\alpha_{i}^{2} \geq \alpha_{i}^{1} > -\frac{1}{p_{i}}, \quad i\in \{1,\dots,n-1\},
	\end{equation*}
	and
	\begin{equation*}
	s_{1} - \frac{1}{\mathbf{p}} - \boldsymbol{\alpha}_{1}
	=
	s_{2} - \frac{1}{\mathbf{q}} - \boldsymbol{\alpha}_{2}.
	\end{equation*}
	
	\begin{enumerate}
		\item[(i)] Assume that $\alpha_{n}^{2} > \alpha_{n}^{1} > -\frac{1}{p_{n}}$. Then
		\begin{equation*}
		\dot{E}_{\vec{q}}^{\vec{\alpha}_{2},\vec{r}} b_{r_{n}}^{s_{2}}
		\hookrightarrow
		\dot{E}_{\vec{p}}^{\vec{\alpha}_{1},\vec{r}} f_{\theta}^{s_{1}}.
		\end{equation*}
		
		\item[(ii)] Assume that $\alpha_{n}^{2} = \alpha_{n}^{1} > -\frac{1}{p_{n}}$. Then
		\begin{equation*}
		\dot{E}_{\vec{q}}^{\vec{\alpha}_{2},\vec{r}} b_{\delta}^{s_{2}}
		\hookrightarrow
		\dot{E}_{\vec{p}}^{\vec{\alpha}_{1},\vec{r}} f_{\theta}^{s_{1}},
		\end{equation*}
		where
		\begin{equation*}
		\delta =
		\begin{cases}
		r_{n}, & \text{if } r_{n} \leq p_{n}, \\
		p_{n}, & \text{if } r_{n} > p_{n}.
		\end{cases}
		\end{equation*}
	\end{enumerate}
\end{theorem}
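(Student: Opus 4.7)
The plan is to adapt the proofs of Theorems \ref{franke1} and \ref{franke2} to handle the mixed regime in which the equality $\alpha_i^2=\alpha_i^1$ and the strict inequality $\alpha_i^2>\alpha_i^1$ may occur at different coordinates $i<n$. The overall scheme is the same three-step reduction: iteratively apply the Sobolev-type bounds from Theorem \ref{embeddings-sobolev} in the first $n-1$ variables in order to reduce matters to a one-dimensional estimate on the $n$-th variable, and then invoke the appropriate one-dimensional Franke embedding of \cite{drihem2016jawerth,Drihem-lorentz-b}.

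Concretely, for a given $\lambda$ in the left-hand side I would first repeat Steps 1 and 2 of Theorem \ref{embeddings-sobolev} coordinate by coordinate; by induction on $j\in\{1,\dots,n-1\}$ this produces
\begin{equation*}
\big\| \cdot \cdot \cdot \big\|V\big\|_{\dot{K}_{p_2}^{\alpha_2^1,r_2}} \cdot \cdot \cdot \big\|_{\dot{K}_{p_j}^{\alpha_j^1,r_j}} \lesssim \sum_{v=0}^{\infty} 2^{(s_2^{j+1}+n/2)v} \sum_{\tilde{m}_j \in \mathbb{Z}^{n-j}} \Big\| \cdot \cdot \cdot \Big\| \sum_{\bar{m}_j \in \mathbb{Z}^{j}} \lambda_{v,m} \chi_{v,\bar{m}_j} \Big\|_{\dot{K}_{q_1}^{\alpha_1^2,r_1}} \cdot \cdot \cdot \Big\|_{\dot{K}_{q_j}^{\alpha_j^2,r_j}} \chi_{v,\tilde{m}_j}.
\end{equation*}
Because the secondary index $\vec{r}$ is the same on both sides of the target embedding, the Sobolev chain is insensitive to whether the relation $\alpha_i^2\geq\alpha_i^1$ is strict or is an equality: what enters the estimation is the power balance \eqref{newexp1.1}, and Lemma \ref{lem:lq-inequality} applies uniformly in both subcases (a direct check of Cases 1 and 2 of Step 1 in Theorem \ref{embeddings-sobolev}).

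Setting $j=n-1$ and putting
\begin{equation*}
\hat{\lambda}_{v,m_n}=\Big\| \cdot \cdot \cdot \Big\| \sum_{\bar{m}\in\mathbb{Z}^{n-1}} \lambda_{v,m}\chi_{v,\bar{m}} \Big\|_{\dot{K}_{q_1}^{\alpha_1^2,r_1}} \cdot \cdot \cdot \Big\|_{\dot{K}_{q_{n-1}}^{\alpha_{n-1}^2,r_{n-1}}},
\end{equation*}
this reduces the $n$-dimensional norm to a one-dimensional bound of the form $\|\hat\lambda\|_{\dot{K}_{p_n}^{\alpha_n^1,r_n}f_1^{s_2^n}}$. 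The two parts of the theorem now diverge only in which one-dimensional Franke embedding is inserted at the final coordinate: for part $\mathrm{(i)}$ with $\alpha_n^2>\alpha_n^1$ one would use
\begin{equation*}
\dot{K}_{q_n}^{\alpha_n^2,r_n}b_{r_n}^{s_2}\hookrightarrow\dot{K}_{p_n}^{\alpha_n^1,r_n}f_1^{s_2^n},
\end{equation*}
exactly as in Step 3 of the proof of Theorem \ref{franke1}; for part $\mathrm{(ii)}$ with $\alpha_n^2=\alpha_n^1=:\alpha_n$, the sharper replacement
\begin{equation*}
\dot{K}_{q_n}^{\alpha_n,r_n}b_\delta^{s_2}\hookrightarrow\dot{K}_{p_n}^{\alpha_n,r_n}f_1^{s_2+1/p_n-1/q_n}
\end{equation*}
(with $\delta$ as specified in the statement) from the same references, cf.\ the proof of Theorem \ref{franke2}, yields the conclusion. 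The main obstacle, essentially a matter of careful bookkeeping, lies in the first of these steps: verifying that the iterative Sobolev reduction of Theorem \ref{embeddings-sobolev} carries through uniformly across a mixed regime where some coordinates $i<n$ have $\alpha_i^2=\alpha_i^1$ while others satisfy $\alpha_i^2>\alpha_i^1$. Once this is in place, parts $\mathrm{(i)}$ and $\mathrm{(ii)}$ are settled by substituting the appropriate one-dimensional Franke embedding at the terminal coordinate.
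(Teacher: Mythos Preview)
Your proposal is correct and follows essentially the same approach as the paper: the paper gives no separate proof for Theorem \ref{franke3}, merely stating that it ``collects'' Theorems \ref{franke1} and \ref{franke2}, and your outline (iterative Sobolev reduction from Theorem \ref{embeddings-sobolev} in the first $n-1$ variables, followed by the appropriate one-dimensional Franke embedding at the $n$-th coordinate) is exactly the scheme of those two proofs. Your observation that the reduction is insensitive to whether $\alpha_i^2>\alpha_i^1$ or $\alpha_i^2=\alpha_i^1$ for $i<n$ because the secondary index $\vec{r}$ is common to both sides is the key bookkeeping point, and it is correct.
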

Using Theorems \ref{phi-tran}, \ref{franke1}, \ref{franke2}, and \ref{franke3}, we obtain the following Franke-type embedding.

\begin{theorem}\label{franke4}
	Let $s_{1}, s_{2} \in \mathbb{R}$, $0 < \vec{q} < \vec{p} < \infty$, $0 < \vec{r}_{1}, \vec{r}_{2} \leq \infty$, 
	$\vec{\alpha}_{1} = (\alpha_{1}^{1}, \ldots, \alpha_{n}^{1}) \in \mathbb{R}^{n}$, 
	$\vec{\alpha}_{2} = (\alpha_{1}^{2}, \ldots, \alpha_{n}^{2}) \in \mathbb{R}^{n}$, and $0 < \theta \leq \infty$. 
	Assume that
	\begin{equation*}
	\alpha_{i}^{1} > -\frac{1}{p_{i}}, \qquad 
	\alpha_{i}^{2} > -\frac{1}{q_{i}}, \qquad i \in \{1, \ldots, n\},
	\end{equation*}
	and that
	\begin{equation*}
	s_{1} - \frac{1}{\mathbf{p}} - \boldsymbol{\alpha}_{1}
	=
	s_{2} - \frac{1}{\mathbf{q}} - \boldsymbol{\alpha}_{2}.
	\end{equation*}
	
	\begin{itemize}
		\item[(i)] Under the assumptions of Theorem \ref{franke3}/(i), we have
		\begin{equation*}
		\dot{E}_{\vec{q}}^{\vec{\alpha}_{2},\vec{r}} B_{r_{n}}^{s_{2}}
		\hookrightarrow
		\dot{E}_{\vec{p}}^{\vec{\alpha}_{1},\vec{r}} F_{\theta}^{s_{1}}.
		\end{equation*}
		
		\item[(ii)] Under the assumptions of Theorem \ref{franke3}/(ii), we have
		\begin{equation*}
		\dot{E}_{\vec{q}}^{\vec{\alpha}_{2},\vec{r}} B_{\delta}^{s_{2}}
		\hookrightarrow
		\dot{E}_{\vec{p}}^{\vec{\alpha}_{1},\vec{r}} F_{\theta}^{s_{1}},
		\end{equation*}
		where
		\begin{equation*}
		\delta =
		\begin{cases}
		r_{n}, & \text{if } r_{n} \leq p_{n}, \\
		p_{n}, & \text{if } r_{n} > p_{n}.
		\end{cases}
		\end{equation*}
	\end{itemize}
	\end{theorem}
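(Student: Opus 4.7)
The plan is to deduce both Franke-type embeddings in Theorem \ref{franke4} from the corresponding discrete embeddings proved in Theorem \ref{franke3} via the $\varphi$-transform characterization Theorem \ref{phi-tran}, in the standard Frazier--Jawerth manner. The parameter conditions $\alpha_{i}^{1}>-\frac{1}{p_{i}}$ and $\alpha_{i}^{2}>-\frac{1}{q_{i}}$ in the hypothesis are exactly what is needed to apply Theorem \ref{phi-tran} to both the source and the target space.

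First I would fix Schwartz functions $\Phi,\Psi,\varphi,\psi$ satisfying \eqref{Ass1}--\eqref{Ass3}. Given $f \in \dot{E}_{\vec{q}}^{\vec{\alpha}_{2},\vec{r}}B_{\eta}^{s_{2}}$, where $\eta=r_{n}$ in case (i) and $\eta=\delta$ in case (ii), Theorem \ref{phi-tran} applied to the source space yields that the $\varphi$-transform $S_{\varphi}f$ lies in the associated sequence space with
\begin{equation*}
\big\|S_{\varphi}f\big\|_{\dot{E}_{\vec{q}}^{\vec{\alpha}_{2},\vec{r}}b_{\eta}^{s_{2}}}
\lesssim \big\|f\big\|_{\dot{E}_{\vec{q}}^{\vec{\alpha}_{2},\vec{r}}B_{\eta}^{s_{2}}}.
\end{equation*}
Here I must check that the hypothesis $\alpha_{i}^{2}>-\frac{1}{q_{i}}$ of Theorem \ref{phi-tran} is available for the $\dot{E}_{\vec{q}}^{\vec{\alpha}_{2},\vec{r}}B_{\eta}^{s_{2}}$ norm, which is exactly part of the assumption of Theorem \ref{franke4}.

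Next, the discrete Franke embedding of Theorem \ref{franke3} applies directly, since the conditions on $\vec{\alpha}_{1}$, $\vec{\alpha}_{2}$, $\vec{p}$, $\vec{q}$ and the compatibility relation $s_{1}-\frac{1}{\mathbf{p}}-\mathbf{\alpha}_{1}=s_{2}-\frac{1}{\mathbf{q}}-\mathbf{\alpha}_{2}$ coincide in case (i) with Theorem \ref{franke3}(i) and in case (ii) with Theorem \ref{franke3}(ii). This yields
\begin{equation*}
\big\|S_{\varphi}f\big\|_{\dot{E}_{\vec{p}}^{\vec{\alpha}_{1},\vec{r}}f_{\theta}^{s_{1}}}
\lesssim \big\|S_{\varphi}f\big\|_{\dot{E}_{\vec{q}}^{\vec{\alpha}_{2},\vec{r}}b_{\eta}^{s_{2}}}.
\end{equation*}

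Finally I would apply Theorem \ref{phi-tran} in the reverse direction to the target space: since $\alpha_{i}^{1}>-\frac{1}{p_{i}}$, the operator $T_{\psi}:\dot{E}_{\vec{p}}^{\vec{\alpha}_{1},\vec{r}}f_{\theta}^{s_{1}}\to \dot{E}_{\vec{p}}^{\vec{\alpha}_{1},\vec{r}}F_{\theta}^{s_{1}}$ is bounded and $T_{\psi}\circ S_{\varphi}=\mathrm{id}$ on $\mathcal{S}^{\prime}(\mathbb{R}^{n})$ restricted to these spaces, giving
\begin{equation*}
\big\|f\big\|_{\dot{E}_{\vec{p}}^{\vec{\alpha}_{1},\vec{r}}F_{\theta}^{s_{1}}}
= \big\|T_{\psi}S_{\varphi}f\big\|_{\dot{E}_{\vec{p}}^{\vec{\alpha}_{1},\vec{r}}F_{\theta}^{s_{1}}}
\lesssim \big\|S_{\varphi}f\big\|_{\dot{E}_{\vec{p}}^{\vec{\alpha}_{1},\vec{r}}f_{\theta}^{s_{1}}}.
\end{equation*}
Chaining the three estimates proves both parts. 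There is no genuine obstacle here since all heavy lifting has been done in Theorem \ref{franke3}; the only point requiring care is to verify that all Littlewood--Paley type identifications are being applied to $f\in\mathcal{S}^{\prime}(\mathbb{R}^{n})$, which is ensured by the embedding $\dot{E}_{\vec{q}}^{\vec{\alpha}_{2},\vec{r}}B_{\eta}^{s_{2}}\hookrightarrow \mathcal{S}^{\prime}(\mathbb{R}^{n})$ from Theorem \ref{embeddings-S-inf}.
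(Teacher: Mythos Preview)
Your proposal is correct and follows exactly the route indicated in the paper: the statement preceding Theorem \ref{franke4} says it is obtained ``Using Theorems \ref{phi-tran}, \ref{franke1}, \ref{franke2} and \ref{franke3}'', i.e., by transferring the discrete Franke embeddings of Theorem \ref{franke3} to the function-space level via the $\varphi$-transform, which is precisely what you do. The only cosmetic remark is that the identity $T_{\psi}\circ S_{\varphi}=\mathrm{id}$ already holds in $\mathcal{S}'(\mathbb{R}^{n})$ by the Calder\'on reproducing formula (Lemma \ref{DW-lemma1}), so one does not need to know in advance that $f$ lies in the target space to write $f=T_{\psi}S_{\varphi}f$.
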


We observe that, as a consequence of Theorem \ref{franke4}/(ii), we obtain the following result.

\begin{corollary}
	Let $s_{1}, s_{2}\in \mathbb{R}$, $0<\theta \leq \infty$, and $0<\vec{q}<\vec{p}<\infty$.
	Assume that
	\[
	s_{1}-\frac{1}{\mathbf{p}} = s_{2}-\frac{1}{\mathbf{q}}.
	\]
	Then the following continuous embeddings hold:
	\begin{equation*}
	B_{\vec{q},p_{n}}^{s_{2}}
	\hookrightarrow \dot{K}_{\vec{q}}^{\vec{0},\vec{p}} B_{p_{n}}^{s_{2}}
	\hookrightarrow F_{\vec{p},\theta}^{s_{1}}.
	\end{equation*}
\end{corollary}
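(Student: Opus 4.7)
The plan is to chain two embeddings: an elementary monotonicity embedding between the ambient Herz spaces at the level of the base space, followed by a direct application of the Franke-type embedding in Theorem \ref{franke4}/$\mathrm{(ii)}$.

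For the first embedding, I would use the identification $B_{\vec{q},p_{n}}^{s_{2}}=\dot{E}_{\vec{q}}^{\vec{0},\vec{q}}B_{p_{n}}^{s_{2}}$ (coming from $\dot{E}_{\vec{p}}^{\vec{0},\vec{p}}=L^{\vec{p}}$ when $\mathbf{\alpha}=0$ and $\vec{p}=\vec{q}$) and then apply the monotonicity $\dot{E}_{\vec{q}}^{\vec{0},\vec{q}}(\mathbb{R}^{n})\hookrightarrow \dot{E}_{\vec{q}}^{\vec{0},\vec{p}}(\mathbb{R}^{n})$, which is exactly Theorem \ref{embeddings1.1}/$\mathrm{(iii)}$ (the embedding $\ell^{q_i}\hookrightarrow \ell^{p_i}$ in each variable, since $q_i\leq p_i$). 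Lifting this to the Besov-type scale via Definition \ref{B-F-def} gives $B_{\vec{q},p_{n}}^{s_{2}}\hookrightarrow \dot{E}_{\vec{q}}^{\vec{0},\vec{p}}B_{p_{n}}^{s_{2}}$.

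For the second embedding, I would apply Theorem \ref{franke4}/$\mathrm{(ii)}$ with the choices $\vec{\alpha}_{1}=\vec{\alpha}_{2}=\vec{0}$ and $\vec{r}=\vec{p}$. The hypotheses are satisfied: $\alpha_{i}^{1}=0>-\tfrac{1}{p_{i}}$, $\alpha_{i}^{2}=0>-\tfrac{1}{q_{i}}$ for every $i$, the common-value condition $\alpha_{n}^{2}=\alpha_{n}^{1}$ holds, and the scaling identity $s_{1}-\tfrac{1}{\mathbf{p}}-\mathbf{\alpha}_{1}=s_{2}-\tfrac{1}{\mathbf{q}}-\mathbf{\alpha}_{2}$ reduces to the assumption $s_{1}-\tfrac{1}{\mathbf{p}}=s_{2}-\tfrac{1}{\mathbf{q}}$. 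With $\vec{r}=\vec{p}$ one has $r_{n}=p_{n}$, so $\delta=p_{n}$. The conclusion of the theorem then reads
\begin{equation*}
\dot{E}_{\vec{q}}^{\vec{0},\vec{p}}B_{p_{n}}^{s_{2}}\hookrightarrow \dot{E}_{\vec{p}}^{\vec{0},\vec{p}}F_{\theta}^{s_{1}}=F_{\vec{p},\theta}^{s_{1}},
\end{equation*}
where the last equality is the identification noted right after Definition \ref{B-F-def}.

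Composing the two embeddings yields the corollary. There is no real obstacle here: the statement is really a cosmetic repackaging of Theorem \ref{franke4}/$\mathrm{(ii)}$ in the special case $\vec{\alpha}=\vec{0}$, combined with the standard monotonicity of the Herz index. The only point to be careful about is selecting $\vec{r}=\vec{p}$ so that on the right-hand side one lands exactly in $F_{\vec{p},\theta}^{s_{1}}$ rather than in a strictly larger Herz-type Triebel-Lizorkin space, and correspondingly on the left-hand side one picks $\delta=p_{n}$, which matches the inner summability exponent of $B_{\vec{q},p_{n}}^{s_{2}}$ on the far left.
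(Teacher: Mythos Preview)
Your proposal is correct and follows essentially the same approach as the paper: the paper simply notes that the corollary is obtained from Theorem~\ref{franke4}/$\mathrm{(ii)}$, and your detailed derivation---identifying $B_{\vec{q},p_{n}}^{s_{2}}=\dot{E}_{\vec{q}}^{\vec{0},\vec{q}}B_{p_{n}}^{s_{2}}$, applying the monotonicity embedding from Theorem~\ref{embeddings1.1}/$\mathrm{(iii)}$, and then specializing Theorem~\ref{franke4}/$\mathrm{(ii)}$ with $\vec{\alpha}_{1}=\vec{\alpha}_{2}=\vec{0}$ and $\vec{r}=\vec{p}$---is exactly how this observation is unpacked.
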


\bigskip \textbf{Acknowledgements}

This work is found by the General Direction of Higher Education and Training
under\ Grant No. C00L03UN280120220004 and by The General Directorate of
Scientific Research and Technological Development, Algeria.


\begin{thebibliography}{99}
\bibitem{BP61} A. Benedek, R. Panzone, \textit{The spaces }$L^{p}$\textit{\
with mixed norm,} Duke Math. J. \textbf{28} (1961), 301--324.

\bibitem{DHR} L. Diening, P. H\"{a}st\"{o}, S. Roudenko, \textit{Function
spaces of variable smoothness and integrability,} J. Funct. Anal.\textbf{\
256}(6) (2009), 1731--1768.

\bibitem{Drihem1.13} D. Drihem, \textit{Embeddings properties on Herz-type
Besov and Triebel-Lizorkin spaces}, Math. Ineq. and Appl. \textbf{16}(2)
(2013), 439--460.

\bibitem{Drihem2.13} D. Drihem, \textit{Sobolev embeddings for Herz-type
Triebel-Lizorkin spaces}, Function Spaces and Inequalities. P. Jain, H.-J.
Schmeisser (ed.). Springer Proceedings in Mathematics and Statistics.
Springer, 2017.

\bibitem{drihem2016jawerth} D. Drihem, \textit{Jawerth-Franke embeddings of
Herz-type Besov and Triebel-Lizorkin spaces,} Funct. Approx. Comment. Math. 
\textbf{61}(2) (2019), 207--226.

\bibitem{Dr-Sobolev} D. Drihem, \textit{Herz-Sobolev spaces on domains}, Le
Matematiche. \textbf{77}(2) (2022). 229--263.

\bibitem{Dr22.Banach} D. Drihem, \textit{Composition operators on Herz-type
Triebel-Lizorkin spaces with application to semilinear parabolic equations},
Banach J. Math. Anal. \textbf{16}, 29 (2022), 46 pages.

\bibitem{Dr-AOT23} D. Drihem, \textit{Triebel-Lizorkin spaces with general
weights,} Adv. Oper. Theory. \textbf{8}(5) (2023), 69 pages.

\bibitem{Drihem-lorentz-b} D. Drihem, \textit{Lorentz Herz-type
Besov-Triebel-Lizorkin spaces}, Adv. Oper. Theory \textbf{10}, 75 (2025), 117 pages.

\bibitem{Fr86} J. Franke, \textit{On the spaces }$F_{p,q}^{s}$\textit{\ of
Triebel-Lizorkin type: pointwise multipliers and spaces on domains}, Math
Nachr. \textbf{125} (1986), 29--68.

\bibitem{FJ86} M. Frazier, B. Jawerth, \textit{Decomposition of Besov spaces,%
} Indiana Univ. Math. J. \textbf{34} (1985), 777--799.

\bibitem{FJ90} M. Frazier, B. Jawerth,\ \textit{A discrete transform and
decomposition of distribution spaces,} J. Funct. Anal. \textbf{93} (1990),
34--170.

\bibitem{FrJaWe01} M. Frazier, B. Jawerth, G. Weiss, \textit{%
Littlewood-Paley Theory and the Study of Function Spaces,} CBMS Regional
Conference Series in Mathematics, vol. 79. Published for the Conference
Board of the Mathematical Sciences, Washington, DC. American Mathematical
Society, Providence (1991).

\bibitem{GJN17} A.G. Georgiadis, J. Johnsen, M. Nielsen, \textit{Wavelet
transforms for homogeneous mixed norm Triebel-Lizorkin spaces,} Monatsh.
Math. \textbf{183} (2017), 587--624

\bibitem{Ja77} B. Jawerth, \textit{Some observations on Besov and
Lizorkin-Triebel spaces}, Math. Scand. \textbf{40} (1977), 94--104.

\bibitem{JS07} J. Johnsen, W. Sickel\textit{, A direct proof of Sobolev
embeddings for quasi-homogeneous Lizorkin-Triebel spaces with mixed norms,\ }%
J. Funct. Spaces Appl. \textbf{5} (2007), 183--198.

\bibitem{JS08} J. Johnsen, W. Sickel, \textit{On the trace problem for
Lizorkin-Triebel spaces with mixed norms}, Math. Nachr. \textbf{281} (2008),
669-696.

\bibitem{JHS12} J. Johnsen, S. Munch Hansen, W. Sickel, \textit{%
Characterisation by local means of anisotropic Lizorkin-Triebel spaces with
mixed norms,} Z. Anal. Anwend. \textbf{32}(3) (2013), 257--277.

\bibitem{KoYa94} H. Kozono, M. Yamazaki, \textit{Semilinear heat equations
and the Navier-Stokes equation with distributions in new function spaces as
initial data}, Comm. Partial Differential Equations \textbf{19}, 5-6 (1994),
959--1014.

\bibitem{LuYang97} S. Lu, D. Yang, \textit{Herz-type Sobolev and Bessel
potential spaces and their applications}, Sci. in China (Ser. A). \textbf{40}%
\ (1997), 113--129.

\bibitem{Ma194} A.L. Mazzucato, \textit{Besov-Morrey spaces: function space
theory and applications to nonlinear PDE}, Trans. Amer. Math. Soc.\textbf{\
355} (2003, 1297--1364.

\bibitem{Ma203} A.L. Mazzucato, \textit{Decomposition of Besov-Morrey spaces},
in Harmonic Analysis at Mount Holyoke, AMS Series in Contemporary
Mathematics. \textbf{320} (2003), 279--294.

\bibitem{Sata07} Y. Sawano, H. Tanaka, \textit{Decompositions of
Besov--Morrey spaces and Triebel--Lizorkin--Morrey spaces}, Math. Z. \textbf{%
257} (2007), 871--905.

\bibitem{ST19} A. Seeger, W. Trebels, Embeddings for spaces of
Lorentz--Sobolev type, Math. Ann. 373, 1017--1056 (2019)

\bibitem{T83} H. Triebel, \textit{Theory of function spaces}, Birkh\"{a}%
user, Basel 1983.

\bibitem{T2} H. Triebel, \textit{Theory of Function Spaces II.} Birkh\"{a}%
user Verlag, Basel, 1992.

\bibitem{T11} Y. Tsutsui, \textit{The Navier-Stokes equations and weak Herz
spaces}, Adv. Differential Equations. \textbf{16} (2011), 1049--1085.

\bibitem{Vybiral08} J. Vyb\'{\i}ral, \textit{A new proof of the
Jawerth-Franke embedding}, Rev. Mat. Complut. \textbf{21}(1) (2008), 75--82.

\bibitem{V} J. Vyb\'{\i}ral, \textit{Sobolev and Jawerth embeddings for
spaces with variable smoothness and integrability}, Ann. Acad. Sci. Fenn.
Math. \textbf{34}(2) (2009), 529--544.

\bibitem{XuYang05} J. Xu, D. Yang, \textit{Herz-type Triebel-Lizorkin
spaces. I},\ Acta. Math. Sci (English Ed.). \textbf{21}(3) (2005), 643--654.

\bibitem{HWang09} H. Wang, \textit{Decomposition for Morrey type
Besov-Triebel spaces}, Math. Nachr. \textbf{282}(5) (2009), 774--787.

\bibitem{YY1} D. Yang, W. Yuan, \textit{A new class of function spaces
connecting Triebel-Lizorkin spaces and }$Q$\textit{\ spaces,} J. Funct.
Anal. \textbf{255} (2008), 2760--2809.

\bibitem{YSY10} W. Yuan, W. Sickel, D. Yang, \textit{Morrey and Campanato
Meet Besov, Lizorkin and Triebel,} Lecture Notes in Mathematics 2005,
Springer-Verlag, Berlin, (2010).

\bibitem{ZYZ22} Y. Zhao, D. Yang, Y. Zhang, \textit{Mixed-norm Herz spaces
and their applications in related Hardy spaces}, Anal. Appl. \textbf{21 (}%
2022), 1131--1222.
\end{thebibliography}
\end{document}